\crefname{theorem}{Theorem}{Theorems}
\crefname{thm}{Theorem}{Theorems}
\crefname{lemma}{Lemma}{Lemmas}
\crefname{lem}{Lemma}{Lemmas}
\crefname{remark}{Remark}{Remarks}
\crefname{prop}{Proposition}{Propositions}
\crefname{defn}{Definition}{Definitions}
\crefname{claim}{Claim}{Claims}
\crefname{corollary}{Corollary}{Corollaries}
\crefname{conjecture}{Conjecture}{Conjectures}
\crefname{question}{Question}{Questions}
\crefname{chapter}{Chapter}{Chapters}
\crefname{section}{Section}{Sections}
\crefname{figure}{Figure}{Figures}
\theoremstyle{plain}
\newtheorem{thm}{Theorem}[section]
\newtheorem{lemma}[thm]{Lemma}
\newtheorem{corollary}[thm]{Corollary}
\newtheorem{prop}[thm]{Proposition}
\newtheorem{conjecture}[thm]{Conjecture}
\newtheorem{question}[thm]{Question}
\theoremstyle{definition}
\theoremstyle{remark}
\newtheorem{remark}[thm]{Remark}
\numberwithin{equation}{section}
\renewcommand{\P}{\mathbb P}
\newcommand{\E}{\mathbb E}
\newcommand{\R}{\mathbb R}
\newcommand{\Z}{\mathbb Z}
\newcommand{\N}{\mathbb N}
\newcommand{\cG}{\mathcal G}
\newcommand{\sA}{\mathscr A}
\newcommand{\sB}{\mathscr B}
\newcommand{\sD}{\mathscr D}
\newcommand{\sE}{\mathscr E}
\newcommand{\sG}{\mathscr G}
\newcommand{\bbH}{\mathbb H}
\newcommand{\eps}{\varepsilon}
\newcommand{\Aut}{\operatorname{Aut}}
\newcommand{\bP}{\mathbf P}
\newcommand{\bE}{\mathbf E}
\newcommand{\opleq}{\preccurlyeq}
\newcommand{\opgeq}{\succcurlyeq}
\newcommand{\myfrac}[3][0pt]{\genfrac{}{}{}{}{\raisebox{#1}{$#2$}}{\raisebox{-#1}{$#3$}}}
\newcommand{\Aint}{A^\mathrm{int}}
\newcommand{\Cint}{C^\mathrm{int}}
\newcommand{\Sint}{S^\mathrm{int}}
\newcommand{\Bint}{B^\mathrm{int}}
\newcommand{\rad}{\operatorname{rad}}
\newcommand{\diam}{\operatorname{diam}}
\title{The $L^2$ boundedness condition in nonamenable percolation}
\author{Tom Hutchcroft\footnote{Statistical Laboratory, DPMMS, University of Cambridge. Email: \href{mailto:t.hutchcroft@maths.cam.ac.uk}{t.hutchcroft@maths.cam.ac.uk}}}
\begin{document}

\maketitle

\setstretch{1.2}

\begin{abstract}
Let $G=(V,E)$ be a connected, locally finite, transitive graph, and consider Bernoulli bond percolation on $G$. In recent work, we conjectured that if $G$ is nonamenable then the matrix of critical connection probabilities 
$T_{p_c}(u,v)=\P_{p_c}(u\leftrightarrow v)$ 
is bounded as an operator $T_{p_c}:L^2(V)\to L^2(V)$ and proved that this conjecture holds for several classes of graphs, including all transitive, nonamenable, Gromov hyperbolic graphs.
In notation, the conjecture states that $p_c<p_{2\to 2}$, where for each $q\in [1,\infty]$ we define $p_{q\to q}$ to be the supremal value of $p$ for which the operator norm $\|T_p\|_{q\to q}$ is finite. 
  We also noted in that work that the conjecture implies two older conjectures, namely that percolation on transitive nonamenable graphs always has a nontrivial nonuniqueness phase, and that critical percolation on the same class of graphs has mean-field critical behaviour.

 In this paper we further investigate the consequences of the $L^2$ boundedness conjecture. In particular, we prove that the following hold for all transitive graphs:
   i) The two-point function decays exponentially in the distance for all $p<p_{2\to 2}$; ii) If $p_c<p_{2\to 2}$, then the critical exponent governing the extrinsic diameter of a critical cluster is $1$; iii) Below $p_{2\to 2}$, percolation is ``ballistic" in the sense that the intrinsic (a.k.a.\ chemical) distance between two points is exponentially unlikely to be much larger than their extrinsic distance; 
   iv) If $p_c<p_{2\to 2}$, then $\|T_{p_c}\|_{q\to q} \asymp (q-1)^{-1}$ and $p_{q\to q}-p_c \asymp q-1$ as $q\downarrow 1$; v) If $p_c<p_{2\to 2}$, then various `multiple-arm' events have probabilities comparable to the upper bound given by the BK inequality. In particular, the probability that the origin is a trifurcation point is of order $(p-p_c)^3$ as $p \downarrow p_c$.
All of these results are new even in the Gromov hyperbolic case.

   Finally, we apply these results together with duality arguments to 
   compute the critical exponents governing the geometry of intrinsic geodesics at the \emph{uniqueness threshold} of percolation in the hyperbolic plane.

\end{abstract}

\newpage

\section{Introduction}

Let $G=(V,E)$ be a connected, locally finite graph and let $p\in [0,1]$. In \textbf{Bernoulli bond percolation}, we independently declare each edge of $G$ to either be open or closed, with probability $p$ of being open, and let $G[p]$ be the subgraph of $G$ formed by deleting every closed edge and retaining every open edge. 
The connected components of $G[p]$ are referred to as \textbf{clusters}. 
We are particularly interested in \emph{phase transitions}, which occur when qualitative features of $G[p]$ change abruptly as $p$ is varied through some critical value. 
The two most interesting such phase transitions occur at the \textbf{critical probability}, which is defined to be 
\[
p_c=p_c(G)=\inf\bigl\{p\in [0,1]: G[p] \text{ has an infinite cluster a.s.}\bigr\},
\]
and the \textbf{uniqueness threshold}, which is defined to be
\[
p_u=p_u(G)=\inf\bigl\{p\in [0,1]: G[p] \text{ has a unique infinite cluster a.s.}\bigr\}.
\]
Many of the most interesting questions in percolation theory concern the behaviour of percolation at and near these critical values.

Historically, percolation was studied primarily on Euclidean lattices such as the hypercubic lattice $\Z^d$. In this case, Aizenman, Kesten, and Newman \cite{MR901151} proved that there is at most one infinite cluster and hence that $p_c=p_u$. An alternative proof of the same fact was later found by Burton and Keane \cite{burton1989density}.  
In their celebrated paper \cite{bperc96}, Benjamini and Schramm proposed a systematic study of percolation on general \textbf{quasi-transitive} graphs, that is, graphs for which the action of the automorphism group on the vertex set has at most finitely many orbits. (In other words, graphs for which `there are only finitely many different types of vertices'.) They made the following conjecture.

\begin{conjecture}
\label{conj:pcpu}
Let $G$ be a connected, locally finite, quasi-transitive graph. Then $p_c(G)<p_u(G)$ if and only if $G$ is nonamenable.
\end{conjecture}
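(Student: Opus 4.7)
The plan is to split \cref{conj:pcpu} into its two implications and handle them separately, with the honest caveat that the nonamenable direction is the famous long-standing open problem of Benjamini and Schramm, so what follows is a strategic roadmap rather than a complete argument.

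For the easy direction, suppose $G$ is amenable; the goal is to show $p_c=p_u$. I would fix $p>p_c$ and apply the Burton--Keane argument to Bernoulli-$p$ percolation viewed as an automorphism-invariant bond process on $G$: along a Følner sequence $\{F_n\}$ one counts trifurcation points of an infinite cluster, each of which contributes at least three distinct edges to $\partial F_n$, so the density of trifurcations is at most $|\partial F_n|/|F_n|\to 0$. Together with the indistinguishability of infinite clusters in the invariant setting, this rules out multiple infinite clusters at every $p>p_c$, yielding $p_c=p_u$.

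For the hard direction, suppose $G$ is nonamenable; we need $p_c<p_u$. The strategy is to establish the author's $L^2$ boundedness conjecture $p_c<p_{2\to 2}$, which by the results recalled in the abstract is already known to imply nontrivial nonuniqueness and hence $p_u>p_c$. Two complementary approaches suggest themselves. The first is spectral--analytic: on any nonamenable transitive graph the simple random walk operator $P$ satisfies $\|P\|_{2\to 2}=\rho<1$, and one would attempt to transfer this gap to $T_p$ via a tree--graph or triangle-condition style inequality of the form $T_p\preccurlyeq C\sum_{n\ge 0}(CpP)^n$, or via a lace expansion adapted to the nonamenable setting in which the built-in exponential decay of return probabilities helps the expansion converge all the way up to $p_c$. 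The second is geometric: extend the author's prior verification of the $L^2$ boundedness conjecture for Gromov hyperbolic transitive graphs by a coarse-graining scheme, showing that a general nonamenable transitive graph can be clumped at a suitable scale into an effective graph inheriting enough hyperbolic-like features (exponential growth, thin macroscopic geodesics) for the earlier proof to apply.

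The principal obstacle is that nonamenability supplies only a single scalar invariant (the spectral radius $\rho$), whereas controlling $T_{p_c}$ as an operator on $L^2(V)$ demands quantitative isoperimetric information at every scale and in every direction. The awkward examples are graphs such as products $T\times\Z$, direct products of two nonamenable factors, and lattices in higher-rank Lie groups: none are hyperbolic, and all lie outside the range of current lace-expansion technology uniformly on the whole interval $[0,p_c]$. I expect the main innovation in any successful proof to lie precisely in the geometric reduction step above, namely finding a scale at which an arbitrary nonamenable transitive graph looks sufficiently hyperbolic- or expander-like that the $L^2$ bound available in the known cases can be bootstrapped back to $G$.
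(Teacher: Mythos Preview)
This statement is presented in the paper as a \emph{conjecture}, not a theorem; the paper does not prove it and indeed emphasizes that the `if' direction remains ``completely open in general.'' So there is no proof in the paper against which to compare your attempt.

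That said, your assessment of the situation is accurate. For the amenable direction, the paper simply remarks that both the Aizenman--Kesten--Newman and Burton--Keane arguments generalize to show $p_c=p_u$ for every amenable quasi-transitive graph; your Burton--Keane sketch is in line with this. For the nonamenable direction, you correctly identify it as the open Benjamini--Schramm problem and honestly label what follows as a roadmap rather than a proof. Your proposed strategy of establishing $p_c<p_{2\to 2}$ and then deducing $p_c<p_u$ is exactly the logical structure the paper advocates (\cref{conj:pcp22} implies \cref{conj:pcpu}), but your two suggested mechanisms---a lace-expansion or tree-graph bound of the form $T_p \preccurlyeq C\sum_n (CpP)^n$, and a coarse-graining to something hyperbolic-like---are not carried out in the paper and, as you yourself note, face well-known obstructions on examples like $T\times \Z$ and higher-rank lattices. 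In particular, the inequality $T_p \preccurlyeq C\sum_n (CpP)^n$ would, if true with $C$ independent of $p$, already prove the conjecture, but there is no known way to obtain such a bound non-perturbatively; this is precisely the missing idea, not a step one can expect to fill in routinely.

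In short: there is no gap to flag in the sense of a wrong step, because you have not claimed a proof. Your write-up is an accurate summary of the state of the problem and of the paper's own perspective on it, but it is not a proof of \cref{conj:pcpu}, and neither does the paper contain one.
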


Here, a graph $G=(V,E)$ is said to be \textbf{nonamenable} if there exists a positive constant $c$ such that $|\partial_E K| \geq c \sum_{v\in K} \deg(v)$ for every finite set of vertices $K$ in $G$, 
where $\partial_E K$ denotes the set of edges with one endpoint in $K$ and one in $V\setminus K$. 
Both proofs of uniqueness in $\Z^d$ can be generalized to show that $p_c(G)=p_u(G)$ for every amenable quasi-transitive graph, so that only the `if' direction of \cref{conj:pcpu} is open. 
While various special cases of \cref{conj:pcpu} have now been verified \cite{MR1756965,Hutchcroftnonunimodularperc,1804.10191,MR1833805,BS00,MR1614583,MR2221157,MR3009109}, it remains completely open in general.
We refer the reader to \cite{grimmett2010percolation,1707.00520,heydenreich2015progress} for background on percolation, and to \cite{Haggstrom06} and the introduction of \cite{1804.10191} for a survey of progress on \cref{conj:pcpu} and related problems.


\cref{conj:pcpu} is closely related to understanding \emph{critical exponents} in percolation on nonamenable graphs. It is strongly believed that critical percolation on nonamenable transitive graphs has \emph{mean-field behaviour}. Roughly speaking, this means that critical percolation on a nonamenable transitive graph should behave similarly to critical \emph{branching random walk} on the same graph. In particular, one expects that various critical exponents exist and take the same value as on a $3$-regular tree, so that for example we expect that
\begin{align}
\bE_p|K_v| &\asymp (p_c-p)^{-1}  &p &\uparrow p_c,
\label{exponent:gamma}
\\
\bP_{p}\left(|K_v|=\infty\right) &\asymp p-p_c &p &\downarrow p_c, \text{ and}
\label{exponent:theta}
\\
\bP_{p_c}\left( |K_v| \geq n\right) & \asymp n^{-1/2} &n&\uparrow \infty,
\label{exponent:volume}
\end{align}
where $v$ is a vertex, $K_v$ is the cluster of $v$, and we write $\asymp$ to denote an equality that holds up to positive multiplicative constants in the vicinity of the relevant limit point. A well-known signifier of mean-field behaviour is the \emph{triangle condition}, which is said to hold if
\[
\nabla_{p_c}(v) := \sum_{u,w}\bP_p(v \leftrightarrow u)\bP_p(u \leftrightarrow w)\bP_p(w \leftrightarrow v) < \infty
\]
for every vertex $v$ of $G$. The triangle condition was introduced by Aizenman and Newman \cite{MR762034}, and has subsequently been shown to imply that various exponents exist and take their mean-field values \cite{MR762034,MR1127713,MR923855,MR2551766,MR2748397,Hutchcroftnonunimodularperc}. In particular, if $G$ is a connected, locally finite, quasi-transitive graph that satisfies the triangle condition then the estimates \eqref{exponent:gamma}, \eqref{exponent:theta}, and \eqref{exponent:volume} all hold for every vertex $v$ of $G$. (The lower bounds of \eqref{exponent:gamma}, \eqref{exponent:volume}, and \eqref{exponent:theta} hold on every quasi-transitive graph \cite{aizenman1987sharpness,MR762034}.) See \cite[Chapter 10]{grimmett2010percolation}  and \cite[Section 7]{Hutchcroftnonunimodularperc} for an overview of the relevant literature.
 The following conjecture is widely believed.

 \begin{conjecture}
 \label{conj:triangle}
Let $G$ be a connected, locally finite, nonamenable, quasi-transitive  graph. Then $\nabla_{p_c}(v)<\infty$ for every $v\in V$.
\end{conjecture}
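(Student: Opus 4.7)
The plan is to reduce \cref{conj:triangle} to the $L^2$ boundedness conjecture $p_c < p_{2 \to 2}$ that is the focus of the paper. The key observation is that $T_p(u,v) = \P_p(u \leftrightarrow v)$ is a symmetric nonnegative kernel, so $T_p$ defines a self-adjoint positive operator on $\ell^2(V)$ whenever it is bounded, and the triangle diagram is precisely a diagonal entry of the cube of this operator: for any vertex $v$,
\[
\nabla_p(v) = \sum_{u,w \in V} T_p(v,u)\,T_p(u,w)\,T_p(w,v) = \bigl\langle \delta_v,\, T_p^{\,3}\, \delta_v\bigr\rangle_{\ell^2(V)}.
\]
Cauchy--Schwarz and submultiplicativity of the operator norm then yield $\nabla_p(v) \leq \|T_p\|_{2 \to 2}^{\,3}$, so the triangle condition at every vertex follows immediately once $\|T_{p_c}\|_{2 \to 2} < \infty$. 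Passing from the transitive to the quasi-transitive case is routine because there are only finitely many orbit representatives to bound.

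The real obstacle is of course to establish the $L^2$ boundedness conjecture itself, which is the central open problem of the paper. The author's prior work already verifies it for transitive Gromov hyperbolic graphs and a handful of other families, and for the general case a natural attack combines (a) the sharp exponential decay of the two-point function below $p_{2\to 2}$ (item (i) of the abstract), (b) the blow-up asymptotics $\|T_{p_c}\|_{q\to q} \asymp (q-1)^{-1}$ (item (iv)), and (c) continuity and monotonicity of $p \mapsto \|T_p\|_{2\to 2}$ on its finiteness interval, in conjunction with a quantitative use of nonamenability such as Cheeger's inequality applied to the simple random walk on $G$. The crux---ruling out the borderline scenario $p_{2\to 2} = p_c$---appears to require a genuinely new input that harnesses the isoperimetric inequality at the critical point itself; existing proofs for special classes all lean on additional geometric structure such as negative curvature or an explicit boundary at infinity, and I expect this gap to be the main difficulty.
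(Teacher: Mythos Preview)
The statement you are addressing is \emph{Conjecture}~\ref{conj:triangle}, not a theorem: the paper does not claim to prove it and contains no proof of it. What the paper does is exactly the reduction you describe in your first paragraph, namely the observation that $\nabla_p(v)=T_p^3(v,v)\le\|T_p^3\|_{2\to2}\le\|T_p\|_{2\to2}^3$, so that \cref{conj:pcp22} implies \cref{conj:triangle}. That implication is stated in the introduction and is not in dispute.

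Your second paragraph does not close the gap. You correctly identify that everything hinges on establishing $p_c<p_{2\to2}$ (equivalently $\|T_{p_c}\|_{2\to2}<\infty$), but the ingredients you list---items (i) and (iv) from the abstract, continuity of $p\mapsto\|T_p\|_{2\to2}$, and a Cheeger-type input---do not combine into a proof. Items (i) and (iv) are proved in the paper \emph{under the hypothesis} $p_c<p_{2\to2}$; they are consequences of the conjecture, not tools for establishing it, so invoking them is circular. The ``borderline scenario $p_{2\to2}=p_c$'' that you flag as the crux is precisely the entire content of \cref{conj:pcp22}, which remains open in general. Your proposal is therefore not a proof but a restatement of the paper's own reduction together with a candid acknowledgement that the main problem is unsolved.
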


As with \cref{conj:pcpu}, this conjecture is known to hold in a variety of special cases \cite{MR1756965,Hutchcroftnonunimodularperc,1804.10191,MR1833805,BS00,MR1614583,kozma2011percolation} but remains completely open in general. (Some much weaker unconditional results on the critical behaviour of percolation in the nonamenable setting have recently been obtained in \cite{hutchcroft2018locality}.)
In the Euclidean context, the triangle condition is believed to hold on $\Z^d$ if and only if $d>6$, and was proven to hold for large $d$ in the milestone work of Hara and Slade \cite{MR1043524}. The record is now held by Fitzner and van der Hofstad \cite{fitzner2015nearest}, who proved that it holds for all $d \geq 11$. It is reasonable to conjecture that the triangle condition holds on every transitive graph of at least seven dimensional volume growth; the assumption of nonamenability should be much stronger than necessary.

\medskip

In our recent work \cite{1804.10191}, we made the following strong quantitative conjecture which implies both \cref{conj:pcpu,conj:triangle}. Let $\tau_p(u,v)$ be the probability that $u$ and $v$ are connected in $G[p]$, which is known as the \textbf{two-point function}. 
 Given a non-negative matrix $M\in [0,\infty]^{V^2}$ indexed by the vertices of $G$ and $p,q\in [1,\infty]$, we define 
 $\|M\|_{p\to q}:=\sup\{ \|Mf\|_q / \|f\|_p : f \in L^p(V), f(v) \geq 0$ $\forall v\in V \}$, i.e., the norm of $M$ as an operator from $L^p(V)$ to $L^q(V)$. Let $T_p\in [0,\infty]^{V^2}$ be the \textbf{two-point matrix}  defined by $T_p(u,v)=\tau_p(u,v)$, and define the critical values
\[
p_{q\to q}=p_{q\to q}(G) = \sup\bigl\{ p\in [0,1] : \|T_p\|_{q\to q} <\infty\bigr\}.
\]
It is easily seen that $\|T_p\|_{1\to 1}=\|T_p\|_{\infty\to \infty} = \sup_{v\in V} \sum_{u\in V} \tau_p(v,u)=\sup_{v\in V} \bE_p |K_v|$, and therefore by sharpness of the phase transition \cite{aizenman1987sharpness,duminil2015new} that $p_c(G)=p_{1\to 1}(G)=p_{\infty\to\infty}(G)$ for every quasi-transitive graph $G$.

\begin{conjecture}[$L^2$ boundedness]
\label{conj:pcp22}
Let $G$ be a connected, locally finite, nonamenable, quasi-transitive graph. Then $p_c(G)<p_{2\to 2}(G)$.
\end{conjecture}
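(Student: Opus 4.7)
The plan is to exploit the nonamenability of $G$ via Kesten's theorem, which asserts that the spectral radius $\rho=\|P\|_{2\to 2}$ of the simple random walk transition matrix $P$ is strictly less than $1$, and then to transfer this spectral gap from $P$ to the two-point matrix $T_{p_c}$. As a warm-up, the union bound over self-avoiding paths gives the entrywise comparison $T_p(u,v)\leq (I-pA)^{-1}(u,v)$, where $A$ is the adjacency matrix of $G$; since $\|A\|_{2\to 2}=d\rho$ for a $d$-regular transitive graph, this yields the crude estimate $p_{2\to 2}(G)\geq 1/(d\rho)$. Unfortunately this bound falls well short of the conjecture: on the $d$-regular tree, where $p_c=1/(d-1)$ and $d\rho=2\sqrt{d-1}$, it proves $p_c<p_{2\to 2}$ only for $d\geq 6$, so a more refined argument is required.

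To do better I would try to develop a Simon--Lieb style recursion directly in $L^2$. For a finite set $R\ni u$ with boundary $\partial R$ one has the pointwise inequality $\tau_p(u,w)\leq \sum_{y\in \partial R}\tau_p^R(u,y)\,\tau_p(y,w)$ for $w\notin R$, where $\tau_p^R$ denotes the two-point function restricted to connections staying inside $R$. Iterating this inequality should lead, schematically, to an estimate of the form $\|T_p\|_{2\to 2}\leq \|T_p^R\|_{2\to 2}\big/\bigl(1-\|\Lambda_p^R\|_{2\to 2}\bigr)$, where $\Lambda_p^R$ is a boundary operator encoding the transition across $\partial R$. Since $T_p^R$ has finite rank and hence finite operator norm, the problem reduces to showing the strict inequality $\|\Lambda_{p_c}^R\|_{2\to 2}<1$ for some compact $R$; the conclusion $p_c<p_{2\to 2}$ then follows by continuity of $p\mapsto \Lambda_p^R$ together with upper semicontinuity of the operator norm.

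The main obstacle is this last strict bound \emph{at} $p=p_c$, which is precisely where nonamenability must enter decisively. On an amenable graph, taking $R$ to be a F\o lner set forces $\|\Lambda_{p_c}^R\|_{2\to 2}\to 1$, consistent with the known equality $p_c=p_u$; on a nonamenable graph one must harness the positive Cheeger constant $h(G)>0$ to obtain a definite spectral gap. A natural route is to quantify the `leakage' of percolation clusters across $\partial R$ by combining the BK inequality with a Cheeger-type inequality relating $\rho$ to $h(G)$; an equivalent reformulation would be to derive a differential inequality of the form $\frac{d}{dp}\|T_p\|_{2\to 2}\leq C(h(G))\,\|T_p\|_{2\to 2}^\alpha$ for some $\alpha<2$, which would preclude blow-up at $p_c$. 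This is precisely the step where the Gromov hyperbolic methods of \cite{1804.10191} relied on the visual boundary and a horocyclic decomposition, neither of which is available in the general setting, so a genuinely new input is presumably needed to handle the full nonamenable case.
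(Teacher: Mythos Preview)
The statement you are attempting to prove is a \emph{conjecture}, not a theorem: the paper does not contain a proof of it, and indeed states explicitly that it ``remains completely open in general.'' There is therefore no paper proof to compare your proposal against.

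Your write-up is honest about this. The first paragraph correctly notes that the naive bound $p_{2\to 2}\geq 1/(d\rho)$ is too weak. The Simon--Lieb iteration you sketch in the second paragraph is a reasonable framework, but the entire content of the conjecture is concentrated in the strict inequality $\|\Lambda_{p_c}^R\|_{2\to 2}<1$ that you isolate in the third paragraph, and you do not prove it. The suggestions you make there --- combining BK with a Cheeger inequality, or deriving a differential inequality $\frac{d}{dp}\|T_p\|_{2\to 2}\leq C\|T_p\|_{2\to 2}^\alpha$ with $\alpha<2$ --- are plausible heuristics but not arguments; in particular, no mechanism is given by which the Cheeger constant alone controls the boundary operator at $p_c$, and the differential inequality you propose would itself imply the conjecture if it could be established, so you have only reformulated the problem. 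Your final sentence concedes this: ``a genuinely new input is presumably needed.''

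In short, what you have written is a discussion of possible strategies rather than a proof, and you have correctly located the gap. That gap is the open problem.
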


It is shown in \cite{1804.10191} that \cref{conj:pcp22} is implied by the weaker statement that $\| T_{p_c}\|_{2\to 2} <\infty$, and implies the stronger statement that $p_c(G)<p_{q\to q}(G)$ for every $q\in (1,\infty)$; a quantitative version of this fact is given in \cref{thm:qtoq_exponents}. To see that \cref{conj:pcp22} implies \cref{conj:pcpu}, simply note that if $p>p_u$ then $\inf_{u,v} T_p(u,v) \geq \inf_{u,v} \bP_p(u \to\infty)\bP_p(v \to\infty) >0$ by the Harris-FKG inequality, so that $\|T_p\|_{2\to 2}=\infty$ since $V$ is infinite. To see that \cref{conj:pcp22} implies \cref{conj:triangle}, note that, by definition, $\nabla_p(v)=T_p^3(v,v) \leq \|T_p^3\|_{2\to 2} \leq \|T_p\|_{2\to 2}^3$ for each $v\in V$ and $p\in [0,1]$.

\medskip

As a heuristic justification of \cref{conj:pcp22}, we note that if $G$ is a quasi-transitive graph then the two-point function of the critical branching random walk on $G$ (which coincides with the simple random walk Greens function) is bounded as an operator on $L^2(V)$ if and only if $G$ is nonamenable \cite[Theorem 10.3]{Woess}. Thus, \cref{conj:pcp22} is predicted by the general philosophy that, in the high-dimensional context, critical percolation should behave similarly to critical branching random walk.

\medskip




\cref{conj:pcp22} is known to hold in most of the special cases in which either \cref{conj:pcpu} or \cref{conj:triangle} had been established. Indeed, the proofs that $p_c<p_u$ holds under various \emph{perturbative  assumptions} such as small spectral radius \cite{MR1756965}, large Cheeger constant \cite{MR1833805}, or high girth \cite{MR3005730} also implicitly yield the stronger claim that $p_c<p_{2\to 2}$ under the same assumptions. In particular, it can be deduced from the work of Pak and Smirnova-Nagnibeda \cite{MR1756965} that every finitely generated nonamenable group has a Cayley graph for which $p_c<p_{2\to 2}$, which lends very strong evidence that the conjecture is true in general. As for  non-perturbative results, it is now known that \cref{conj:pcp22} holds under the additional assumption that $G$ is either Gromov hyperbolic \cite{1804.10191} or has a quasi-transitive nonunimodular subgroup of automorphisms  \cite{Hutchcroftnonunimodularperc}. 
The latter condition holds, for example, if $G=T_k \times H$ is the Cartesian product of a regular tree of degree $k\geq 3$ with a quasi-transitive graph $H$. 
 A notable exception is given by groups of cost $>1$, which are known to have $p_c<p_u$ \cite{MR2221157,MR3009109} but are not known to have $p_c<p_{2\to 2}$ or to satisfy the triangle condition at $p_c$. (As a modest first step in this direction, one could try to prove $p_c<p_{2\to2}$ for infinitely-ended transitive graphs.)

\medskip

In this paper, we further explore the consequences of \cref{conj:pcp22}. We find in particular that \cref{conj:pcp22} has various very strong consequences that are not known to follow from \cref{conj:pcpu} or \cref{conj:triangle} alone. Our results can briefly be summarized as follows:

\begin{enumerate}
\item \cref{thm:p2to2pexp}: If $p<p_{2\to 2}$, then the two-point function $\tau_p(u,v)$ is exponentially small in the graph distance $d(u,v)$. Thus, if $p_c<p_{2\to 2}$ then this property holds for both critical and slightly supercritical percolation.

\item \cref{thm:ext_radius}: If $p_c<p_{2\to 2}$, then the critical exponent governing the \emph{extrinsic} radius of clusters is $1$. That is, the probability that the cluster of the origin in $G[p_c]$ reaches distance at least $n$ in $G$ is of order $n^{-1}$. This is not known to follow from the triangle condition even under the assumption of nonamenability. 

\item \cref{thm:ballistic}: If $p<p_{2\to 2}$, then the clusters of $G[p]$ are `ballistic' in the following sense: For any two vertices $u$ and $v$ of $G$, conditioned on $u$ and $v$ being connected in $G[p]$, the intrinsic distance $d_\mathrm{int}(u,v)$ between $u$ and $v$ in $G[p]$ is exponentially unlikely to be much longer than $d(u,v)$. Moreover, for fixed $u$, the random variable $\sup \{ d_\mathrm{int}(u,v) / d(u,v) : v \leftrightarrow u\}$ has an exponential tail. 


\item \cref{thm:qtoq_exponents}: If $p_c<p_{2\to2}$, then we have the asymptotic estimates 
$\|T_{p_c}\|_{q\to q} \asymp (q-1)^{-1}$ and $p_{q\to q}-p_c \asymp q-1$ as $q\downarrow 1$. 
This can be used to deduce various strong quantitative estimates on critical and slightly supercritical percolation. 

\item \cref{thm:multibody,thm:multiarm}: If $p_c<p_{2\to 2}$ then the probabilities of various  `multiple arm' events are of the same order as the upper bound given by the BK inequality. In particular, we deduce that if $G$ is transitive then the probability that the origin is a trifurcation point is of order $(p-p_c)^3$ as $p\downarrow p_c$ (\cref{cor:furcations}).

\end{enumerate}

Further applications of the $L^2$ boundedness condition to slightly supercritical percolation are investigated in \cite{2002.02916,2002.02916b}.
Finally, we apply some of the results above together with duality arguments to study the critical behaviour of percolation \emph{at the uniqueness threshold} $p_u$ on nonamenable, quasi-transitive, simply connected planar maps (e.g. tesselations of the hyperbolic plane), which are always Gromov hyperbolic \cite{MR3658330} and hence have $p_c<p_{2\to 2}$ by the results of \cite{1804.10191}. In particular, we prove the following.
\begin{enumerate}[resume]
\item If $G$ is a nonamenable quasi-transitive simply connected planar map, then the probability that two neighbouring vertices are connected in $G[p_u]$ but have intrinsic distance at least $n$ is of order $n^{-1}$ (excluding pairs of vertices for which there are local obstructions that make the probability trivially equal to zero). 
 \end{enumerate}
We have organized the paper textbook-style into several short sections, each of which contains a complete treatment of one or more of the topics above. We will assume that the reader is familiar with  Fekete's subadditive lemma, the Harris-FKG inequality, the BK inequality, Reimer's inequality, and Russo's formula, referring them to \cite[Chapter 2]{grimmett2010percolation} otherwise.  




\section{Exponential decay of the two-point function}
\label{sec:extrinsic_decay}

In this section we study the consequences of the $p_c<p_{2\to 2}$ condition for the decay of the two-point function.
Let $G$ be a connected, locally finite graph. 
We say that $G$ has \textbf{exponential connectivity decay} at $p$ if the quantity
\[
\xi_p := -\limsup_{n\to\infty} \frac{1}{n}\log \sup\Bigl\{\tau_p(u,v) : d(u,v)\geq n\Bigr\}
\]
is positive. That is,  $\xi_p$ is maximal such that for every $\eps>0$ there exists $C_{p,\eps}<\infty$ such that
\[
\tau_{p}(u,v) \leq C_{p,\eps} \exp\Bigl[- (\xi_p -\eps)\cdot d(u,v)\Bigr]
\]
for all vertices $u$ and $v$ in $G$. Following \cite{MR1833805}, we define the \textbf{exponential connectivity decay threshold} to be
\[
p_\mathrm{exp}=p_\mathrm{exp}(G) = \sup\Bigl\{p \in [0,1] : G \text{ has exponential connectivity decay at $p$} \Bigr\}.
\]
It is clear that $p_\mathrm{exp}\leq p_u$ when $G$ is quasi-transitive, and it is known that this inequality is strict in some examples, such as the free product $\Z^2 * (\Z/2\Z)$, and saturated in others, such as trees. It is also classical that $p_c(G)\leq p_\mathrm{exp}(G)$ for every quasi-transitive graph $G$, see \cite[Chapter 6]{grimmett2010percolation}. 

The following conjecture is widely believed among experts but does not seem to have appeared in print.
 See \cite[Open Problems 3.1 and 3.2]{MR1833805} for further related problems.


\begin{conjecture}
\label{conj:pcpexp}
Let $G$ be a connected, locally finite, quasi-transitive, nonamenable graph. Then $p_c(G)<p_\mathrm{exp}(G)$.
\end{conjecture}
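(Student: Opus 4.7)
The natural plan is to route \cref{conj:pcpexp} through the $L^2$ boundedness conjecture. Indeed, \cref{thm:p2to2pexp} (stated in the introduction) asserts that $\tau_p(u,v)$ decays exponentially in the graph distance $d(u,v)$ for every $p<p_{2\to 2}$, and hence immediately $p_{2\to 2}\leq p_\mathrm{exp}$ for every connected, locally finite, quasi-transitive graph. Combining this with the (still conjectural) strict inequality $p_c<p_{2\to 2}$ of \cref{conj:pcp22} yields $p_c<p_\mathrm{exp}$. In particular, \cref{conj:pcpexp} is settled in every class of graphs for which the $L^2$ boundedness condition is already known: Gromov hyperbolic quasi-transitive graphs \cite{1804.10191}, graphs with a quasi-transitive nonunimodular automorphism subgroup \cite{Hutchcroftnonunimodularperc}, and the various perturbative settings (small spectral radius, large Cheeger constant, high girth).

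To tackle the remaining general case I would first attempt the full $L^2$ boundedness conjecture itself, guided by the heuristic (mentioned in the introduction) that on a nonamenable quasi-transitive graph critical percolation should resemble a slightly subcritical branching random walk, whose Green's function is $L^2$-bounded by \cite[Theorem~10.3]{Woess}. Concretely one could try to compare $T_{p_c}$ to the branching random walk Green's function via either a lace-expansion-style identity adapted to exploit nonamenability in place of high dimensionality, or a spectral/variational argument taking the isoperimetric constant as input. Failing that, a softer target sufficient for \cref{conj:pcpexp} alone would be to prove exponential decay of $\tau_{p_c+\eps}$ for some small $\eps>0$ by a renormalisation / coarse-graining argument: declare a macroscopic block to be ``good" if a suitable crossing event occurs inside it, arrange that the goodness events at the block scale stochastically dominate a high-parameter Bernoulli site percolation, and use the isoperimetric inequality together with a Peierls-type dual argument to obtain exponential decay of the probability that a cluster reaches a distant block.

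The main obstacle is the $L^2$ boundedness conjecture itself, which is arguably the deepest open problem in the area: it fails in the amenable setting, and all non-perturbative techniques currently available require strong additional structure (hyperbolicity or a nonunimodular subgroup). The standard sharpness-of-the-phase-transition machinery of Aizenman--Barsky and Duminil-Copin--Tassion yields exponential decay only below $p_c$, so a genuinely new mechanism is required to propagate a spectral gap \emph{through} the phase transition. Any successful approach must crucially use nonamenability, must survive the absence of perturbative smallness, and must be compatible with the FKG/BK correlation structure of percolation --- all features that make the problem resist present methods. A reasonable first concrete target on the way to the general conjecture would be to establish $p_c<p_\mathrm{exp}$ for multiply-ended transitive graphs, or for the Cayley graphs of groups of cost $>1$ to which the conjecture is not yet known to apply.
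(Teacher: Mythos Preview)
The statement is a \emph{conjecture}, not a theorem: the paper does not prove it unconditionally, and neither do you. What the paper does is exactly what you describe in your first paragraph --- it proves \cref{thm:p2to2pexp} (that $p_{2\to 2}\leq p_\mathrm{exp}$ for any connected, locally finite graph) and observes that this reduces \cref{conj:pcpexp} to \cref{conj:pcp22}, thereby settling it in the Gromov hyperbolic, nonunimodular, and perturbative cases. So on the part that the paper actually carries out, your proposal is correct and takes the same approach.

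Your second and third paragraphs go beyond anything the paper attempts: the paper makes no move toward proving \cref{conj:pcp22} in general, nor does it propose a renormalisation or lace-expansion route to \cref{conj:pcpexp} directly. These are reasonable research directions to mention, but they are speculation rather than a proof, and you rightly flag them as such. One small correction: \cref{thm:p2to2pexp} as stated in the paper does not require quasi-transitivity --- it holds for any connected, locally finite graph --- so your phrasing ``for every connected, locally finite, quasi-transitive graph'' is slightly weaker than what is actually proved.
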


Perturbative proofs of $p_c<p_u$ typically also implicitly yield the stronger result $p_c<p_\mathrm{exp}$, and the previously mentioned papers \cite{MR1833805,bperc96,MR1756965,MR3005730} all yield that $p_c<p_\mathrm{exp}$ for the graphs they consider. The most notable non-perturbative result is due to Schonmann \cite{MR1888869}, who proved that $p_\mathrm{exp}=p_u$ for transitive, one-ended, nonamenable, planar graphs. Besides this, Schramm proved that for any transitive, unimodular, nonamenable graph, the critical two-point function is exponentially small in the distance for \emph{some} pairs of vertices, namely, the endpoints of a random walk have this property with high probability. See \cite{kozma2011percolation} for Schramm's proof and \cite{Hutchcroft2016944,1712.04911,1804.10191} for related results.

Our first result verifies \cref{conj:pcpexp} under the assumption that $p_c<p_{2\to 2}$. In particular, applying the results of \cite{1804.10191,Hutchcroftnonunimodularperc}, we deduce that the conjecture holds under the additional assumption that the graph in question is either Gromov hyperbolic or has a quasi-transitive nonunimodular subgroup of automorphisms.

\begin{thm}
\label{thm:p2to2pexp}
Let $G$ be a connected, locally finite graph. 
Then $p_{2\to 2}(G)\leq p_\mathrm{exp}(G)$.
\end{thm}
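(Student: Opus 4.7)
My plan is to combine an iterated BK inequality with the $L^2$ boundedness of $T_p$. The starting observation is that if $d(u,v)\geq k$, then every open self-avoiding path from $u$ to $v$ contains at least $k-1$ internal vertices $w_1,\ldots,w_{k-1}$ appearing in order along the path, and the connection events $\{u\leftrightarrow w_1\},\{w_1\leftrightarrow w_2\},\ldots,\{w_{k-1}\leftrightarrow v\}$ then occur edge-disjointly. Applying the BK inequality to such a decomposition and taking a union bound over the choices of intermediate vertices yields the key inequality
\[
\tau_p(u,v) \;\leq\; T_p^{\,k}(u,v) \qquad \text{whenever } d(u,v)\geq k,
\]
where $T_p^{\,k}$ is the $k$-th matrix power of $T_p$.

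Combining this with the pointwise operator bound $T_p^{\,k}(u,v)=\langle T_p^{\,k}\delta_v,\delta_u\rangle \leq \|T_p\|_{2\to 2}^{\,k}$ and taking $k=d(u,v)$ would give $\tau_p(u,v)\leq M^{d(u,v)}$, where $M:=\|T_p\|_{2\to 2}$. This delivers exponential decay immediately when $M<1$, but since $T_p\geq I$ pointwise we always have $M\geq 1$ for $p>0$, so the bare operator bound is not enough. The crux of the proof is therefore to extract exponential decay from the finiteness of $\|T_p\|_{2\to 2}$ without a spectral-gap-type hypothesis.

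To this end, I would instead apply the Simon-type inequality with the sphere $S_r(u)$ as separating set: for $d(u,v)>r$,
\[
\tau_p(u,v) \;\leq\; \sum_{w\in S_r(u)} \tau_p(u,w)\,\tau_p(w,v),
\]
and then use Cauchy-Schwarz together with the observation that $d(w,v)\geq d(u,v)-r$ for $w\in S_r(u)$ to obtain
\[
\tau_p(u,v)^{2} \;\leq\; \Bigl(\sum_{w\in S_r(u)} \tau_p(u,w)^{2}\Bigr)\Bigl(\sum_{w:\,d(w,v)\geq d(u,v)-r} \tau_p(w,v)^{2}\Bigr).
\]
Both factors are tails of the convergent series $\sum_w \tau_p(\cdot,w)^2\leq M^2$ (which is finite precisely because $\|T_p\|_{2\to 2}<\infty$), and hence tend to zero as $r$ and $d(u,v)-r$ grow. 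This already yields pointwise decay $\tau_p(u,v)\to 0$ as $d(u,v)\to\infty$.

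The final and hardest step is to upgrade this pointwise decay to exponential decay. Setting $f(n):=\sup_{d(u,v)\geq n}\tau_p(u,v)$, I would combine the preceding Cauchy-Schwarz estimate with the BK-iterated bound to derive a sub-multiplicative inequality of the form $f(m+n)\leq C\,f(m)\,f(n)$ with $C$ depending only on $M$, and then conclude via Fekete's subadditive lemma applied to $-\log f$. The main obstacle lies in this last step: obtaining a sub-multiplicative relation with a uniform constant requires converting the operator norm $M$ (which is not small) into the vanishing $L^2$-tails produced by the separating-sphere argument, and arranging the iteration so that this vanishing translates into a genuine exponential rate rather than a merely vanishing-at-infinity rate.
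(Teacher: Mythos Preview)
Your proposal contains the right ingredients up to the point you yourself flag as the main obstacle, but that obstacle is a genuine gap rather than a detail to be filled in. The submultiplicative inequality $f(m+n)\leq C\,f(m)\,f(n)$ for the pointwise supremum $f(n)=\sup_{d(u,v)\geq n}\tau_p(u,v)$ does \emph{not} follow from the tools you have assembled. From the Simon--BK separating-sphere bound one gets
\[
\tau_p(u,v)\;\leq\;\sum_{w\in S_n(u)}\tau_p(u,w)\,\tau_p(w,v)\;\leq\; |S_n(u)|\,f(n)\,f(m),
\]
and the sphere size $|S_n(u)|$ can grow exponentially, so no uniform $C$ emerges. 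The Cauchy--Schwarz route replaces $|S_n(u)|$ by the $L^2$-tail $g(n)=\sup_u\sum_{d(u,w)\geq n}\tau_p(u,w)^2$, giving $f(n+m)\leq M\sqrt{g(m)}$; this shows $f\to 0$, but to get exponential decay of $f$ you would now need exponential decay of $g$, and you are back where you started with no submultiplicativity for $g$ either. So the argument, as written, is circular at exactly the step you need.

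The paper resolves this by changing the object to which Fekete is applied. Rather than the pointwise quantity $f(n)$, it works with the operator norm $\|C_{p,n}\|_{2\to 2}$ of the truncated two-point matrix $C_{p,n}(u,v)=\tau_p(u,v)\mathbbm{1}(d(u,v)\geq n)$. The BK inequality gives the \emph{matrix} bound $C_{p,n+m}\preccurlyeq C_{p,m}S_{p,n}$ (with $S_{p,n}$ the restriction to the sphere $d=n$), from which submultiplicativity of $\|C_{p,n}\|_{2\to 2}$ is immediate---no sphere-counting enters because the sum over the intermediate vertex is absorbed into a matrix product. The second, and decisive, idea is an averaging trick: since $C_{p,n+m}\preccurlyeq C_{p,n}S_{p,r}$ for every $0\leq r\leq m$ and $\sum_{r=0}^m S_{p,r}\preccurlyeq T_p$, one obtains
\[
\|C_{p,n+m}\|_{2\to 2}\;\leq\;\frac{\|T_p\|_{2\to 2}}{m+1}\,\|C_{p,n}\|_{2\to 2}.
\]
Choosing $m\approx e\|T_p\|_{2\to 2}$ makes the prefactor strictly less than $1$, yielding a genuine contraction and hence exponential decay of $\|C_{p,n}\|_{2\to 2}$. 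The pointwise statement then follows from $\tau_p(u,v)=\langle C_{p,n}\mathbbm{1}_u,\mathbbm{1}_v\rangle\leq\|C_{p,n}\|_{2\to 2}$ whenever $d(u,v)\geq n$. This averaging step is precisely the missing idea that converts ``$\|T_p\|_{2\to 2}<\infty$'' into ``some iterate contracts''; your sketch does not contain an analogue of it.
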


Let us mention again that Cayley graphs of groups of cost $>1$ are known to have $p_c<p_u$ \cite{MR2221157,MR3009109} but are not known to have $p_c<p_{2\to 2}$ or $p_c<p_\mathrm{exp}$.

\begin{remark}
The proof of \cref{thm:p2to2pexp} yields the quantitative bound
\begin{equation}
\tau_p(u,v)\leq 2\|T_p\|_{2\to 2} \exp\left[ - \frac{d(u,v)}{ e \|T_p\|_{2\to 2}} \right]
\end{equation}
for every $u,v \in V$ and $0<p<p_{2\to 2}$.
\end{remark}

The proof is an `$L^q$ version' of the usual proof that there is exponential connectivity decay when the susceptibility is finite \cite[Theorem 6.1]{grimmett2010percolation}.
Let $G$ be a connected, locally finite graph, let $p\in [0,1]$ and $n\geq 0$, and consider the symmetric matrices $C_{p,n},S_{p,n} \in [0,\infty]^{V^2}$ defined by
\[
C_{p,n}(u,v) = \tau_p(u,v) \mathbbm{1}(d(u,v)\geq n)
\qquad \text{ and } \qquad
S_{p,n}(u,v) = \tau_p(u,v) \mathbbm{1}(d(u,v) = n).
\]
($C$ is for `complement' and $S$ is for `sphere'.)
\cref{thm:p2to2pexp} will be deduced from the following proposition.


\begin{prop}
\label{prop:expdecay}
Let $G$ be a connected, locally finite graph. Then for every $q\in [1,\infty]$ and $0< p <p_{q\to q}(G)$ there exists $\eta_{p,q}>0$ such that
\[
\lim_{n\to\infty} \frac{1}{n} \log \|C_{p,n}\|_{q\to q} = \inf_{n\geq 1} \frac{1}{n} \log \|C_{p,n}\|_{q\to q} = -\eta_{p,q}.
\]
Moreover, $\eta_{p,q}$ satisfies $\eta_{p,q}\geq e^{-1}\|T_p\|_{q\to q}^{-1}$ when $0<p<p_{q\to q}(G)$.
\end{prop}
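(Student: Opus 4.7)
My plan is to prove the two assertions in sequence: first establishing submultiplicativity of $\|C_{p,n}\|_{q \to q}$ via the BK inequality (so that Fekete's lemma gives existence of the limit as the infimum), and then extracting the quantitative lower bound via a pigeonhole argument on the shell decomposition $T_p = \sum_n S_{p,n}$.

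For the submultiplicativity, I will establish the matrix inequality $C_{p,n+m} \leq S_{p,n} C_{p,m}$ by sphere separation and BK. For any $u, v$ with $d(u,v) \geq n+m$, the sphere $\{w : d(u,w) = n\}$ separates $u$ from $v$ in $G$, so every open path from $u$ to $v$ must traverse it; applying BK to the resulting decomposition $\{u \leftrightarrow v\} \subseteq \bigcup_{w : d(u,w) = n} \{u\leftrightarrow w\} \circ \{w \leftrightarrow v\}$, and using that $d(w, v) \geq m$ automatically for such $w$, yields the claim. Taking $L^q$ operator norms and using $\|S_{p,n}\|_{q\to q} \leq \|C_{p,n}\|_{q\to q}$ gives $\|C_{p,n+m}\|_{q\to q}\leq\|C_{p,n}\|_{q\to q}\|C_{p,m}\|_{q\to q}$; Fekete's subadditive lemma applied to $\log \|C_{p,n}\|_{q\to q}$ then yields existence of the limit as the infimum.

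For the quantitative lower bound, set $K := \|T_p\|_{q\to q}$. The strategy is to find an integer $n_0 \leq \lceil eK \rceil$ with $\|S_{p,n_0}\|_{q\to q} \leq e^{-1}$; iterating the sharper submultiplicative bound $\|C_{p,n+m}\|_{q\to q} \leq \|S_{p,n}\|_{q\to q}\|C_{p,m}\|_{q\to q}$ then yields $\|C_{p, k n_0}\|_{q\to q} \leq K e^{-k}$, giving $\eta_{p,q} \geq 1/n_0 \geq 1/(eK)$. For $q = 1$, the identity $\sum_n \sum_{u : d(u,v) = n} \tau_p(u,v) = \sum_u \tau_p(u,v) \leq K$, valid for each $v$, directly yields the desired bound on some shell norm via vertex-wise pigeonhole. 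For general $q \in [1,\infty]$, I would use the elementary inequality $(\sum_n a_n)^q \geq \sum_n a_n^q$ (for $a_n \geq 0$ and $q \geq 1$), applied pointwise to $a_n = (S_{p,n}f)(u)$ and summed over $u$, to obtain $\sum_n \|S_{p,n} f\|_q^q \leq \|T_p f\|_q^q \leq K^q \|f\|_q^q$ for every non-negative $f \in L^q(V)$, and then extract from this summability the required bound on some $\|S_{p,n_0}\|_{q\to q}$.

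The main obstacle is this last extraction step for general $q \in (1, \infty)$: the summability $\sum_n \|S_{p,n} f\|_q^q \leq K^q \|f\|_q^q$ produces, via pigeonhole on each fixed $f$, only an $f$-dependent index $n_0(f)$ with $\|S_{p,n_0(f)} f\|_q$ small, whereas the proof needs a single $n_0$ for which the operator norm $\|S_{p,n_0}\|_{q\to q}$ itself is small. I expect to bridge this by instead considering the generating operator $B_\lambda := \sum_n \lambda^n S_{p,n}$ and establishing $\|B_\lambda\|_{q\to q} < \infty$ for some $\lambda > 1$ with $\log \lambda \geq 1/(eK)$, using the recursive matrix inequality $B_\lambda \leq I + \lambda S_{p,1} B_\lambda$ and its higher-order variants $B_\lambda \leq \sum_{k < n} \lambda^k S_{p,k} + \lambda^n S_{p,n} B_\lambda$ (obtained by iterating BK) together with the pigeonhole input; boundedness of $B_\lambda$ for such $\lambda$ is directly equivalent to the claimed exponential decay rate and circumvents the need for a uniform $n_0$.
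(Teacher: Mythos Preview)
Your Fekete argument for the existence of the limit is correct and matches the paper exactly: the matrix inequality $C_{p,n+m}\preceq S_{p,n}C_{p,m}$ via BK, together with $S_{p,n}\preceq C_{p,n}$, gives submultiplicativity of $\|C_{p,n}\|_{q\to q}$.

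The gap is in the quantitative bound, and your proposed fix does not close it. Your pigeonhole inequality $\sum_n \|S_{p,n}f\|_q^q \leq K^q\|f\|_q^q$ genuinely only produces an $f$-dependent index, and the same defect is already present in your $q=1$ argument: the identity you write holds for each fixed $v$, but $\|S_{p,n}\|_{1\to 1}=\sup_v\sum_{u:d(u,v)=n}\tau_p(u,v)$, so pigeonhole only delivers a $v$-dependent $n_0$ unless the graph is transitive. The generating-operator route does not escape this circularity: your recursion $B_\lambda \preceq \sum_{k<n}\lambda^k S_{p,k} + \lambda^n S_{p,n}B_\lambda$ is correct, but to rearrange it you need precisely a \emph{uniform} $n$ with $\lambda^n\|S_{p,n}\|_{q\to q}<1$, which is the very thing you cannot extract from the $f$-wise summability.

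The paper sidesteps the whole issue with an averaging trick carried out at the level of matrices, not test functions. Since $C_{p,n+m}\preceq C_{p,n+r}$ for all $0\leq r\leq m$, one may average the BK inequality over $r$ to obtain the \emph{pointwise} matrix bound
\[
C_{p,n+m} \;\preceq\; \frac{1}{m+1}\sum_{r=0}^m C_{p,n+r} \;\preceq\; \frac{1}{m+1}\,C_{p,n}\sum_{r=0}^m S_{p,r} \;\preceq\; \frac{1}{m+1}\,C_{p,n}\,T_p,
\]
whence $\|C_{p,n+m}\|_{q\to q}\leq \frac{\|T_p\|_{q\to q}}{m+1}\|C_{p,n}\|_{q\to q}$ with no $f$-dependence at all. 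Iterating with $m=n$ gives $\|C_{p,kn}\|_{q\to q}\leq \bigl(\|T_p\|_{q\to q}/(n+1)\bigr)^k\|T_p\|_{q\to q}$, and taking $n=\lceil e\|T_p\|_{q\to q}\rceil-1$ yields $\eta_{p,q}\geq e^{-1}\|T_p\|_{q\to q}^{-1}$. The point is that summing the shells \emph{inside} the matrix inequality, rather than after taking norms, recovers $T_p$ itself and hence the operator norm $K$ directly.
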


We begin with the following simple consequence of the BK inequality.
Given two matrices $M,N \in [-\infty,\infty]^{V^2}$, we write $M \opleq N$ if $M(u,v)\leq N(u,v)$ for every $u,v \in V$. Note that if $M,N \in [0,\infty]^{V^2}$ and $M\opleq N$ then $\|M\|_{q\to q}\leq \|N\|_{q\to q}$ for every $q\in [1,\infty]$.

\begin{lemma} 
\label{lem:SCsubmult}
Let $G$ be a connected, locally finite graph. Then 
\begin{equation}
\label{eq:SCBK2}
C_{p,n+m} \opleq C_{p,m}S_{p,n} 
\end{equation}
for every $p\in [0,1]$ and $n,m\geq 0$.
\end{lemma}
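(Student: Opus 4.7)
The plan is to establish the pointwise inequality by exhibiting a disjoint-occurrence decomposition of the connection event and then invoking the BK inequality. Fix $u,v\in V$ with $d(u,v)\geq n+m$; if no such pair of vertices exists both sides of \eqref{eq:SCBK2} vanish at $(u,v)$ and there is nothing to prove.

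The geometric input is the observation that on $\{u\leftrightarrow v\}$, every self-avoiding open path from $u$ to $v$ must visit the sphere $\partial B(v,n) := \{w\in V : d(v,w)=n\}$. Indeed, the function $x\mapsto d(v,x)$ changes by at most $1$ per step along any path, starts at $d(u,v)\geq n+m\geq n$ at $u$, and reaches $0$ at $v$. Fix any deterministic ordering of $V$, let $\pi$ be the lexicographically minimal self-avoiding open path from $u$ to $v$ on the event $\{u\leftrightarrow v\}$, and let $W$ be the first vertex of $\partial B(v,n)$ visited by $\pi$. Splitting $\pi$ at $W$ yields two edge-disjoint open subpaths, witnessing the inclusion
\[
\{u\leftrightarrow v\} \subseteq \bigcup_{w\in \partial B(v,n)} \bigl\{u\leftrightarrow w\bigr\} \circ \bigl\{w\leftrightarrow v\bigr\},
\]
where $\circ$ denotes disjoint occurrence, interpreted for infinite-volume connection events in the standard way via approximation by finite-volume events (cf.\ \cite[Chapter 2]{grimmett2010percolation}).

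Applying the union bound together with the BK inequality then yields
\[
\tau_p(u,v) \leq \sum_{w\in \partial B(v,n)} \tau_p(u,w)\,\tau_p(w,v).
\]
Because $d(u,v)\geq n+m$ and $d(v,w)=n$, the triangle inequality forces $d(u,w)\geq m$ for every $w$ appearing in the sum. Hence each summand equals $C_{p,m}(u,w)\,S_{p,n}(w,v)$, and the indicator $\mathbbm{1}(d(w,v)=n)$ built into $S_{p,n}$ already restricts the full matrix-product sum to $w\in\partial B(v,n)$. Therefore the right-hand side above is exactly $(C_{p,m}S_{p,n})(u,v)$, establishing \eqref{eq:SCBK2}.

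I anticipate no real obstacle: the decomposition is a routine application of the first-crossing argument, and the only point requiring any care is the justification of BK for the infinite-volume connection events $\{u\leftrightarrow w\}$ and $\{w\leftrightarrow v\}$, which is standard and handled by exhausting the graph by finite subgraphs.
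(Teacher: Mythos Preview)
Your proof is correct and follows essentially the same first-crossing plus BK argument as the paper. The only cosmetic difference is that you center the sphere at $v$ rather than at $u$; this has the minor advantage of yielding the product $C_{p,m}S_{p,n}$ directly in the stated order, whereas the paper's choice implicitly relies on the symmetry of $S_{p,n}$ and $C_{p,m}$.
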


\begin{proof}[Proof of \cref{lem:SCsubmult}]
 Suppose that $u,v \in V$ have $d(u,v)\geq n+m$. On the event that $u$ is connected to $v$ in $G[p]$, there must exist $w$ with $d(u,w)=n$ such that the disjoint occurrence $\{u\leftrightarrow w\}\circ\{w \leftrightarrow v\}$ occurs, and such a $w$ must have $d(w,v)\geq m$ by the triangle inequality. Indeed, simply take some simple path from $u$ to $v$ in $G[p]$, and take $w$ to be the first vertex with $d(u,w)=n$ that this path visits. Summing over the possible choices of $w$ and applying the BK inequality yields the claim.
\end{proof}

\begin{proof}[Proof of \cref{prop:expdecay}]
Since $S_{p,n}\opleq C_{p,n}$ for every $n\geq 0$, 
\cref{lem:SCsubmult} implies that $\|C_{p,n}\|_{q\to q}$ satisfies the submultiplicative inequality $\|C_{p,n+m}\|_{q\to q} \leq \|C_{p,n}\|_{q\to q}\|C_{p,m}\|_{q\to q}$, and applying Fekete's Lemma we deduce that 
\[
\eta_{p,q} =- \lim_{n\to\infty}\frac{1}{n}\log \|C_{p,n}\|_{q\to q} = -\inf_{n\geq0} \frac{1}{n}\log \|C_{p,n}\|_{q\to q}
\] 
is well-defined when $0<p<p_{q\to q}$.  It remains to prove that $\eta_{p,q}>0$ for $0\leq p< p_{q\to q}$.

Let $n,m\geq 0$ and let $0<p <p_{q\to q}$.
Noting that $C_{p,\ell}\opleq C_{p,r}$ whenever $\ell \geq r$ and applying \eqref{eq:SCBK2}, we obtain that
\[
C_{p,n+m} \opleq \frac{1}{m+1}\sum_{r=0}^m C_{p,n+r} \opleq  \frac{1}{m+1} C_{p,n}\sum_{r=0}^m  S_{p,r}  \opleq \frac{1}{m+1} C_{p,n} T_p,
\]
and hence that
\[
\|C_{p,n+m}\|_{q\to q} \leq \frac{\|T_p\|_{q\to q}}{m+1} \|C_{p,n}\|_{q\to q}.
\]
%
Applying this bound inductively and using that $C_{p,0}=T_p$ we deduce that
\[
\|C_{p,kn}\|_{q\to q}\leq \left[\frac{\|T_p\|_{q\to q}}{n+1}\right]^k\|T_p\|_{q\to q}
\]
for every $k,n\geq 0$, and hence that 
\[\eta_{p,q}\geq \frac{1}{n} \log \frac{n+1}{\|T_p\|_{q\to q}}\]
for every $n\geq 1$. Taking $n=\lceil e\|T_p\|_{q\to q}\rceil -1$ completes the proof. Note that we also obtain the explicit bound
\begin{equation}
\label{eq:extdecayexplicit}
\|C_{p,r}\|_{q\to q} \leq \|T_p\|_{q\to q} \exp\left[ - \left\lfloor \frac{r}{e \|T_p\|_{q\to q}}\right\rfloor \right] \leq 2 \|T_p\|_{q\to q} \exp\left[ -  \frac{r}{e \|T_p\|_{q\to q}} \right].
\end{equation}
(We have bounded $e^{1/e}$ by $2$ for aesthetic reasons.)
\end{proof}

\begin{proof}[Proof of \cref{thm:p2to2pexp}]
Let $u,v\in V$ be such that $d(u,v)\geq n$, let $q\in [1,\infty]$, and let $0<p<p_{q\to q}$. Then we have by H\"older's inequality that
\[\tau_p(u,v) = \langle C_{p,n} \mathbbm{1}_u, \mathbbm{1}_v \rangle \leq \|C_{p,n}\mathbbm{1}_u\|_q \|\mathbbm{1}_v\|_{\frac{q}{q-1}} \leq \|C_{p,n}\|_{q\to q}.\]
It follows that $\xi_p \geq \eta_{p,q} \geq e^{-1} \|T_p\|_{q\to q}^{-1} >0$ for every $0<p<p_{q\to q}(G)$.
\end{proof}

\section{Cluster ballisticity and the extrinsic radius exponent}
\label{sec:intrinsic_decay}

We now turn to our results concerning cluster ballisticity and the extrinsic radius exponent. We begin by discussing the extrinsic radius exponent. 
Let $G=(V,E)$ be a connected, locally finite graph and let $p\in [0,1]$. Let $d$ denote the graph distance on $G$, and let $d_\mathrm{int}$ denote the graph distance on the subgraph $G[p]$, where we set $d_\mathrm{int}(u,v)=\infty$ if $u$ and $v$ are in different clusters. The \textbf{extrinsic radius} $\operatorname{rad}(K_v)$ of the cluster of $v$ in $G[p]$ is defined to be $\sup\{d(v,u) : u \in K_v\}$, while the \textbf{intrinsic radius} $\operatorname{rad}_\mathrm{int}(K_v)$ is defined to be $\sup\{d_\mathrm{int}(v,u) : u \in K_v\}$.

As discussed in the introduction, mean-field theory predicts that critical percolation on sufficiently high-dimensional quasi-transitive graphs (say, of at least seven-dimensional volume growth) resembles the trace of a critical branching random walk on the graph. Many of the specific predictions suggested by this heuristic are known to hold for any quasi-transitive graph satisfying the triangle condition. For example, Barsky and Aizenman \cite{MR1127713} showed that in this case
\[
\bP_{p_c}\bigl(|K_v|\geq n \bigr) \asymp n^{-1/2},
\] 
and Kozma and Nachmias \cite{MR2551766} showed that 
\begin{equation}
\label{eq:KNint}
\bP_{p_c}\bigl(\operatorname{rad}_\mathrm{int}(K_v)\geq n \bigr) \preceq n^{-1}.
\end{equation}
The matching lower bound \[\bP_{p_c}\bigl(\operatorname{rad}_\mathrm{int}(K_v)\geq n \bigr) \succeq n^{-1}\] holds on \emph{every} quasi-transitive graph, see \cite[Proposition 4.2]{2002.02916}. 
These exponents are the same as those governing a critical Galton-Watson tree of finite variance, which arises as the genealogical tree of a critical branching random walk.

Determining the tail of the \emph{extrinsic} radius is a more subtle matter, since it varies from graph to graph even in the mean-field regime and therefore cannot be deduced from the triangle condition alone: on a regular tree of degree at least three we have that extrinsic and intrinsic distances coincide and hence that $\bP_{p_c}\bigl(\operatorname{rad}(K_v)\geq n \bigr)=\bP_{p_c}\bigl(\operatorname{rad}_\mathrm{int}(K_v)\geq n \bigr) \asymp n^{-1}$, while for high-dimensional Euclidean lattices it was proven in the breakthrough  work of Kozma and Nachmias \cite{MR2748397} that $\bP_{p_c}\bigl(\operatorname{rad}(K_v)\geq n \bigr) \asymp n^{-2}$. Intuitively, the difference between these two exponents is explained by the fact that simple random walk is ballistic on the tree and diffusive on $\Z^d$. Since random walk is ballistic on every quasi-transitive nonamenable graph, it is reasonable to conjecture that $\bP_{p_c}\bigl(\operatorname{rad}(K_v)\geq n \bigr) \asymp n^{-1}$ for every such graph. Our next result verifies this conjecture under the assumption that $p_c<p_{2\to 2}$.

\begin{thm}
\label{thm:ext_radius} Let $G$ be a connected, locally finite, nonamenable, quasi-transitive graph, and suppose that $p_c(G)<p_{2\to 2}(G)$. Then for every $v\in V$ we have that 
\begingroup
\addtolength{\jot}{0.5em}
\begin{align}
\bP_{p_c}(\operatorname{rad}(K_v)\geq n ) 
&\asymp n^{-1}  &\text{as }n &\uparrow \infty.
\label{exponent:extradius}
\end{align}
\endgroup
\end{thm}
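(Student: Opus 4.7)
The upper bound is immediate from the pointwise inequality $\operatorname{rad}(K_v)\leq\operatorname{rad}_\mathrm{int}(K_v)$ together with the Kozma--Nachmias estimate \eqref{eq:KNint} on the intrinsic radius; the latter applies because the hypothesis $p_c<p_{2\to 2}$ implies the triangle condition via $\nabla_{p_c}(v)\leq\|T_{p_c}\|_{2\to 2}^3<\infty$.

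For the matching lower bound, my plan is to convert the universal intrinsic estimate $\bP_{p_c}(\operatorname{rad}_\mathrm{int}(K_v)\geq N)\succeq N^{-1}$ into an extrinsic one using the ballisticity of clusters. Fix a large constant $C>0$ to be chosen later and decompose
\[
\{\operatorname{rad}_\mathrm{int}(K_v)\geq Cn\}\ \subseteq\ \{\operatorname{rad}(K_v)\geq n\}\ \cup\ B_n,
\]
where $B_n=\{\exists\,u\in V\colon u\leftrightarrow v,\ d_\mathrm{int}(u,v)\geq Cn,\ d(u,v)<n\}$ is the ``long-detour'' event. If we can show that $\bP_{p_c}(B_n)$ is exponentially small in $n$ for $C$ large enough, then
\[
\bP_{p_c}(\operatorname{rad}(K_v)\geq n)\ \geq\ \bP_{p_c}(\operatorname{rad}_\mathrm{int}(K_v)\geq Cn)\ -\ \bP_{p_c}(B_n)\ \succeq\ n^{-1},
\]
completing the proof.

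To bound $\bP_{p_c}(B_n)$, I would apply a union bound over $u\in B(v,n)$ and combine two ingredients: the vertex-by-vertex ballisticity from \cref{thm:ballistic}, which should yield, for each pair $(u,v)$, an estimate of the form $\bP_{p_c}(u\leftrightarrow v,\ d_\mathrm{int}(u,v)\geq Cn)\leq C_1\,\tau_{p_c}(u,v)\,\exp[-c_1(Cn-d(u,v))]$; and the exponential two-point decay $\tau_{p_c}(u,v)\leq C_2\exp(-c_2\, d(u,v))$ from \cref{thm:p2to2pexp}, which applies since $p_c<p_{2\to 2}$. The product of these exponential factors decays like $\exp(-c_1 Cn)$ with only an $O(d(u,v))$ correction in the exponent, and the sum over $u\in B(v,n)$ costs at most $|B(v,n)|\leq D^n$, where $D$ bounds the degree (using local finiteness and quasi-transitivity). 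Choosing $C$ large enough relative to $\log D$ and the constants $c_1,c_2$ then makes $\bP_{p_c}(B_n)$ exponentially small in $n$, and in particular $o(n^{-1})$.

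The principal obstacle is that using only the weaker ``worst-ratio'' form of ballisticity, $\bP(\sup_{u\leftrightarrow v}d_\mathrm{int}(u,v)/d(u,v)\geq\lambda)\leq Ce^{-c\lambda}$, leaves a residual error that is constant in $n$, degrading the final lower bound to $1/(n\log n)$. The essential fix is to use the finer vertex-by-vertex form of ballisticity and pair it with the exponential two-point decay from \cref{thm:p2to2pexp}, so that the union bound can beat the exponential growth of $|B(v,n)|$; in this sense \cref{thm:p2to2pexp} plays a genuinely necessary role here, not merely a quantitative refinement.
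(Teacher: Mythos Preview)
Your decomposition is exactly the one the paper uses, and the argument is correct. The paper bounds the error term slightly more directly: rather than invoking the conditional, vertex-by-vertex form of \cref{thm:ballistic}, it observes that
\[
\bP_{p_c}(B_n)\ \leq\ \sum_{u\in B(v,n)} \Cint_{p_c,Cn}(u,v)\ =\ \langle \Cint_{p_c,Cn}\mathbbm{1}_{B(v,n)},\mathbbm{1}_v\rangle\ \leq\ \|\Cint_{p_c,Cn}\|_{2\to 2}\,|B(v,n)|^{1/2},
\]
and then applies \cref{prop:intdecay} (which is what underlies \cref{thm:ballistic} anyway). One correction: the exponential two-point decay of \cref{thm:p2to2pexp} is \emph{not} needed here, contrary to your final paragraph. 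Once you have the conditional bound $\bP_{p_c}(u\leftrightarrow v,\ d_\mathrm{int}(u,v)\geq Cn)\leq C_1\tau_{p_c}(u,v)e^{-c_1 Cn}$ from \eqref{eq:exponential_distance_decay} (valid for $C\geq \lambda_{p_c}$ since $d(u,v)<n$), the trivial bound $\tau_{p_c}(u,v)\leq 1$ and $|B(v,n)|\leq D^n$ already make the union bound exponentially small for $C$ large; the paper likewise does not invoke \cref{thm:p2to2pexp} at this step.
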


Note that the upper bound of \eqref{exponent:extradius} follows from the corresponding bound for the intrinsic radius, so that it remains only to prove the lower bound. \cref{thm:ext_radius} was known in the case that $G$ has a quasi-transitive nonunimodular subgroup, using arguments specific to that setting \cite[Theorem 1.6]{Hutchcroftnonunimodularperc}. We believe the theorem is new in essentially all other cases, including for tessellations of the hyperbolic plane.

The key step in the proof of \cref{thm:ext_radius} is the following intrinsic variant on \cref{prop:expdecay}, which can be used to derive various ballisticity results for percolation below $p_{2\to 2}$. For each $p\in [0,1]$ and $n\geq 0$ we define the matrix $\Cint_{p,n}\in [0,\infty]^{V^2}$ by
\[
\Cint_{p,n}(u,v) = \bP_p\left( u \leftrightarrow v, d_\mathrm{int}(u,v)\geq n\right).
\]

\begin{prop}
\label{prop:intdecay}
 Let $G$ be a connected, locally finite graph, and let $p<p_{q\to q}(G)$. Then
\[
\|\Cint_{p,n}\|_{q\to q} \leq 3 \|T_p\|_{q\to q} \exp\left[ -\frac{n}{e\|T_p\|_{q\to q}}\right]
\]
for every $n\geq 0$.
\end{prop}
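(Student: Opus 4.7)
The goal is to establish, as the intrinsic analogue of the sphere submultiplicativity of \cref{lem:SCsubmult}, the matrix inequality
\[
(m+1)\,\Cint_{p,n+m} \opleq \Cint_{p,n}\,T_p \qquad (n,m\geq 0),
\]
via Reimer's inequality combined with a multiplicity argument along intrinsic geodesics, and then to iterate exactly as in the endgame of the proof of \cref{prop:expdecay} to obtain the stated exponential decay.

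\textbf{Step 1: geodesic decomposition and Reimer's inequality.} Fix $u,v\in V$ and consider a configuration $\omega$ in the event $\{u\leftrightarrow v,\ d_\mathrm{int}(u,v)\geq n+m\}$. Choose an intrinsic geodesic $\gamma=(u=v_0,v_1,\ldots,v_\ell=v)$ in $G[p]$ of length $\ell\geq n+m$. For each $r\in\{n,n+1,\ldots,n+m\}$ set $w_r:=v_r$, let $J_r$ be the $\ell-r$ edges of $\gamma$ from $w_r$ to $v$, and $I_r:=E\setminus J_r$. Clearly $J_r$ witnesses $\{w_r\leftrightarrow v\}$ via the corresponding sub-path of $\gamma$. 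The crux is that $I_r$ witnesses the non-monotone event $\{u\leftrightarrow w_r,\ d_\mathrm{int}(u,w_r)\geq n\}$: for any $\omega'$ with $\omega'|_{I_r}=\omega|_{I_r}$, any open $u$--$w_r$ path $\pi$ in $\omega'$ either uses no $J_r$-edge (so $\pi\subseteq I_r$ is open in $\omega$, hence of length $\geq d_\mathrm{int}^\omega(u,w_r)=r\geq n$) or has a first $J_r$-vertex $v_j$ with $j\in[r,\ell]$ (since the $J_r$-vertices are $v_r,\ldots,v_\ell$); the initial $I_r$-sub-path from $u$ to $v_j$ is then open in $\omega$ and has length $\geq d_\mathrm{int}^\omega(u,v_j)=j\geq n$. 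Reimer's inequality then gives
\[
\bP\bigl(\{u\leftrightarrow w,\,d_\mathrm{int}(u,w)\geq n\}\,\square\,\{w\leftrightarrow v\}\bigr)\leq \Cint_{p,n}(u,w)\,T_p(w,v)
\]
for every $w\in V$. Since each such $\omega$ realises this disjoint occurrence simultaneously at the $m+1$ distinct vertices $w_n,w_{n+1},\ldots,w_{n+m}$, summing over $w\in V$ and taking expectations produces the target matrix inequality.

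\textbf{Step 2: iteration, and the main obstacle.} Taking the $q\to q$ operator norm gives $\|\Cint_{p,n+m}\|_{q\to q}\leq\frac{\|T_p\|_{q\to q}}{m+1}\|\Cint_{p,n}\|_{q\to q}$. Since $\|T_p\|_{q\to q}\geq 1$ (as $T_p(v,v)=1$), choosing $m+1=\lceil e\|T_p\|_{q\to q}\rceil$ makes the ratio at most $1/e$; iterating $k$ times from $\|\Cint_{p,0}\|_{q\to q}=\|T_p\|_{q\to q}$ yields $\|\Cint_{p,km}\|_{q\to q}\leq e^{-k}\|T_p\|_{q\to q}$, and monotonicity in the subscript together with $m\leq e\|T_p\|_{q\to q}$ gives $\|\Cint_{p,n}\|_{q\to q}\leq e\cdot\|T_p\|_{q\to q}\exp[-n/(e\|T_p\|_{q\to q})]$; bounding $e\leq 3$ produces the stated estimate (replacing the $e^{1/e}\leq 2$ bound used in the extrinsic proof). \emph{The main obstacle} is the verification in Step 1 that the complement-witness $I_r$ genuinely forces the non-monotone intrinsic-distance event: this is the essential new input beyond the extrinsic argument, and it hinges on the rigidity of intrinsic geodesics, namely that any alternate open $u$--$w_r$ path in $\omega'$ must either remain inside $I_r$ (and hence inherit the length lower bound from $\gamma$) or exit $I_r$ only at a geodesic vertex of position $\geq n$.
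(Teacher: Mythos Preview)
Your proof is correct, but it follows a genuinely different route from the paper's.

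The paper introduces auxiliary matrices $\Bint_{p,n}$, $\Sint_{p,n}$, $\Aint_{p,n,m}$ and proves a key lemma (\cref{lem:intrinsicsubmult}) asserting $\Aint_{p,n,n+m}\opleq \Bint_{p,m}\Sint_{p,n}$. Its witness for the sphere event $\{v\in\partial B_\mathrm{int}(u,n)\}$ is the entire intrinsic ball of radius $n$ together with all closed edges incident to it; the witness for the complementary ball event is the remaining geodesic. The $(n+1)^{-1}$ factor is then obtained by an averaging trick over sphere radii, yielding $\Aint_{p,kn,(k+1)n}\opleq\frac{1}{n+1}T_p\Aint_{p,(k-1)n,kn}$.

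Your argument bypasses the auxiliary matrices entirely and proves the cleaner matrix inequality $(m+1)\Cint_{p,n+m}\opleq \Cint_{p,n}T_p$ directly. The key difference is your choice of witnesses: you take $J_r$ to be only the far geodesic segment (witnessing the monotone event $\{w_r\leftrightarrow v\}$) and $I_r=E\setminus J_r$ to witness the non-monotone event $\{u\leftrightarrow w_r,\,d_\mathrm{int}(u,w_r)\geq n\}$; the multiplicity factor $m+1$ then comes from the $m+1$ distinct geodesic vertices $v_n,\ldots,v_{n+m}$ rather than from averaging. Your verification that $I_r$ really witnesses the intrinsic-distance lower bound is sound: any $\omega'$-open $u$--$w_r$ path must first enter $\{v_r,\ldots,v_\ell\}$ using only $I_r$-edges, and that initial segment is $\omega$-open and hence of length $\geq r\geq n$.

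What each approach buys: yours is more direct and self-contained for this proposition, and sidesteps the pitfall flagged in the paper's remark about why $\Cint_{p,n+m}\opleq\Cint_{p,n}\Cint_{p,m}$ fails (you replace the second $\Cint$ by $T_p$, trading the distance lower bound for the multiplicity factor). The paper's route, via \cref{lem:intrinsicsubmult} and the sphere matrices, is more modular: the same lemma and averaging trick are reused to prove the mean-field lower bounds of \cref{prop:lowerball}.
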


Before proving \cref{prop:intdecay}, let us see how it implies \cref{thm:ext_radius}. 

\begin{proof}[Proof of \cref{thm:ext_radius} given \cref{prop:intdecay}]
Since $p_c<p_{2\to 2}$ implies that $\nabla_{p_c}<\infty$, we have that $\bP_{p_c}(\rad_\mathrm{int}(K_v) \geq n) \asymp n^{-1}$. Therefore, since the extrinsic radius is bounded by the intrinsic radius, it suffices to prove the lower bound. For every $n\geq 1$ and $m\geq n$ we trivially have that
\begin{align*}
\bP_{p_c}(\rad(K_v) \geq n) \geq \bP_{p_c}\bigl(\rad_\mathrm{int}(K_v) \geq m\bigr) - \bP_{p_c}\Bigl(\rad_\mathrm{int}(K_v) \geq m \text{ and } \rad(K_v) \leq n\Bigr).
\end{align*}
The second term on the right can be bounded by
\begin{multline*}
\bP_{p_c}\Bigl(\rad_\mathrm{int}(K_v) \geq m \text{ and } \rad(K_v) \leq n\Bigr) \leq \sum_{u\in B(v,n)} \bP_{p_c}(u \leftrightarrow v, d_\mathrm{int}(u,v)\geq m)\\ = \langle \Cint_{p,m} \mathbbm{1}_{B(v,n)}, \mathbbm{1}_v \rangle \leq \| \Cint_{p,m}\|_{2\to 2} \|\mathbbm{1}_{B(v,n)}\|_2 \|\mathbbm{1}_v\|_2,
\end{multline*}
and applying \cref{prop:intdecay} we deduce that
\[
\bP_{p_c}(\rad_\mathrm{int}(K_v) \geq m \text{ and } \rad(K_v) \leq n)
\leq 3 \|T_{p_c}\|_{2\to 2} \exp\left[ - \frac{m}{e \|T_{p_c}\|_{2\to 2}}\right] |B(v,n)|^{1/2}
\]
for every $m \geq n \geq 1$. Since $|B(v,n)|$ grows at most exponentially in $n$, it follows that there exist constants $C$ and $c$ such that if $m\geq Cn$ then
$\bP_{p_c}(\rad_\mathrm{int}(K_v) \geq m \text{ and } \rad(K_v) \leq n) \leq e^{-cn}$. 
Taking $m=\lceil C n \rceil$, we deduce that there exists a positive constant $C'$ such that
\[
\bP_{p_c}(\rad(K_v) \geq n) \geq \bP_{p_c}(\rad_\mathrm{int}(K_v) \geq C n) - e^{-cn} \geq C' n^{-1} - e^{-cn},
\]
from which the claim follows immediately. 
\end{proof}

\cref{prop:intdecay} can also be used to derive various other results concerning the ballisticity of percolation clusters. Examples of two such statements are given in the following theorem, whose proof is very straightforward and is omitted. 

\begin{thm}
\label{thm:ballistic}
Let $G$ be a connected, locally finite, quasi-transitive graph. Then for every $p<p_{2\to 2}(G)$, there exist positive constants $c_p,C_p$ and $\lambda_{p}$ such that
\begin{equation}
\bP_p\left( \max_{v\in K_u} \frac{d_\mathrm{int}(u,v)}{d(u,v)} \geq \lambda \right) \leq C_p e^{-c_p \lambda}
\end{equation}
for every $u\in V$ and $t\geq 1$, and similarly
\begin{equation}
\label{eq:exponential_distance_decay}
\bP_p\bigl( d_\mathrm{int}(u,v) \geq n \mid u \leftrightarrow v \bigr) \leq C_p e^{-c_p n }
\end{equation}
for every $u,v\in V$ and $n \geq \lambda_{p} d(u,v)$.
\end{thm}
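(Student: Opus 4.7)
The plan is to deduce both parts of \cref{thm:ballistic} directly from \cref{prop:intdecay} applied with $q=2$, using only Cauchy--Schwarz, a union bound, and the trivial lower bound $\tau_p(u,v)\geq p^{d(u,v)}$ obtained by noting that any single geodesic from $u$ to $v$ is entirely open with probability $p^{d(u,v)}$.

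For \eqref{eq:exponential_distance_decay} I would bound the numerator and denominator of the conditional probability separately. The matrix-entry estimate
\[
\bP_p\bigl(u\leftrightarrow v,\,d_\mathrm{int}(u,v)\geq n\bigr) = \langle \Cint_{p,n}\mathbbm{1}_u,\mathbbm{1}_v\rangle \leq \|\Cint_{p,n}\|_{2\to 2}
\]
together with \cref{prop:intdecay} gives the numerator an exponential-in-$n$ decay, while dividing by $\tau_p(u,v)\geq p^{d(u,v)}$ introduces a compensating factor $\exp[d(u,v)\log(1/p)]$. Choosing $\lambda_p = 2e\|T_p\|_{2\to 2}\log(1/p)$ ensures that the linear-in-$d(u,v)$ term is absorbed by half of the exponential decay in $n$ whenever $n\geq \lambda_p d(u,v)$, yielding \eqref{eq:exponential_distance_decay} with $c_p = 1/(2e\|T_p\|_{2\to 2})$.

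For the first inequality I would perform a union bound over spheres around $u$, writing
\[
\bP_p\!\left(\max_{v\in K_u}\frac{d_\mathrm{int}(u,v)}{d(u,v)}\geq \lambda\right) \leq \sum_{k=1}^\infty \sum_{v\in S(u,k)} \bP_p\bigl(u\leftrightarrow v,\,d_\mathrm{int}(u,v)\geq \lceil\lambda k\rceil\bigr).
\]
For each fixed $k$, Cauchy--Schwarz bounds the inner sum by $\|\Cint_{p,\lceil\lambda k\rceil}\|_{2\to 2}|S(u,k)|^{1/2}$, and $|S(u,k)|\leq D^k$ by local finiteness, where $D$ denotes the maximum degree of $G$. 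Substituting the estimate from \cref{prop:intdecay} turns the double sum into a geometric series whose common ratio is $\exp\bigl[-\lambda/(e\|T_p\|_{2\to 2}) + (\log D)/2\bigr]$. Once $\lambda \geq e\|T_p\|_{2\to 2}\log D$ this ratio is at most $\exp\bigl[-\lambda/(2e\|T_p\|_{2\to 2})\bigr]$, and the whole sum is therefore bounded by a constant multiple of $\exp[-\lambda/(2e\|T_p\|_{2\to 2})]$; for the remaining bounded range of $\lambda$ the inequality is trivial after enlarging $C_p$.

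There is no real obstacle: both steps use only the pointwise consequence of \cref{prop:intdecay}, and the main conceptual ingredient (the operator-norm decay of $\Cint_{p,n}$) is already in hand. The only mild cost is that, in the absence of any nontrivial lower bound on $\tau_p(u,v)$ beyond the pathwise estimate $p^{d(u,v)}$, the value of $\lambda_p$ we obtain in \eqref{eq:exponential_distance_decay} is certainly not optimal---but since the theorem only asserts the existence of \emph{some} constants, this is not an issue.
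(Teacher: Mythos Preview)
Your proposal is correct and is precisely the kind of straightforward deduction from \cref{prop:intdecay} that the paper has in mind; the paper in fact omits the proof entirely, saying only that it ``is very straightforward and is omitted,'' and your argument mirrors the use of $\|\Cint_{p,m}\|_{2\to 2}$ and Cauchy--Schwarz that appears in the proof of \cref{thm:ext_radius}. One tiny quibble: the bound $|S(u,k)|\leq D^k$ requires bounded degree, not merely local finiteness, but this is automatic here since $G$ is quasi-transitive.
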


Note that the special case of \cref{thm:ballistic} in which $p<p_c$ can also be deduced from the fact that the cluster volume in subcritical percolation has an exponential tail \cite{MR762034} (although the argument given here will give better control on the rate of divergence of the constants as $p\uparrow p_c$). See \cite{MR1404543} for a related result concerning \emph{supercritical} percolation on Euclidean lattices. 

In contrast to these results, in \cref{sec:planar} we show that for percolation in the hyperbolic plane, the quantity $\bP_{p_u}\bigl( d_\mathrm{int}(u,v) \geq n \mid u \leftrightarrow v \bigr)$ can have a power law tail at $p_u$.

\medskip

We now turn to the proof of \cref{prop:intdecay}. We write $B_\mathrm{int}(v,n)=\{ u\in V: d_\mathrm{int}(u,v)\leq n\}$ and $\partial B_\mathrm{int}(v,n)=\{ u\in V: d_\mathrm{int}(u,v)= n\}$ for the intrinsic ball and sphere of radius $n$ around $v$ respectively, and for each $p\in [0,1]$ and $m\geq n \geq 0$ define the matrices
\begin{align*}
\Bint_{p,n}(u,v) &= \P_p\left( v \in B_\mathrm{int}(u,n) \right), &&\\
\Sint_{p,n}(u,v) &=  \P_p\left(v \in \partial B_\mathrm{int}(u,n) \right), &&\text{and}\\
\Aint_{p,n,m}(u,v) &=  \P_p\left( v \in  B_\mathrm{int}(u,m) \setminus  B_\mathrm{int}(u,n-1) \right).&&
\end{align*}

\begin{lemma}
\label{lem:intrinsicsubmult}
Let $G$ be a connected, locally finite graph. Then
\begin{equation}
\label{eq:intrinsicsubmult}
\Aint_{p,n,n+m} \opleq \Bint_{p,m}\Sint_{p,n} 
\end{equation}
for every $p\in [0,1]$ and $m,n\geq 0$.
\end{lemma}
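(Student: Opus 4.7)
The plan is to adapt the proof of \cref{lem:SCsubmult} (the extrinsic analogue), but the direct BK argument used there will not work here because the event $\{d_\mathrm{int}(w,v)=n\}$ is not increasing (it asks for the non-existence of short paths). In place of BK I would use an exploration/Markov-property argument.

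Using the symmetry of $\Sint_{p,n}$, the claim to establish at each entry $(u,v)$ is
\[
\bP_p\bigl(n\leq d_\mathrm{int}(u,v)\leq n+m\bigr) \;\leq\; \sum_w \bP_p\bigl(w\in \partial B_\mathrm{int}(v,n)\bigr)\,\bP_p\bigl(d_\mathrm{int}(u,w)\leq m\bigr).
\]
The geometric input I would use is that on the event on the left, any intrinsic geodesic from $v$ to $u$ contains a vertex $w$ at intrinsic distance exactly $n$ from $v$; the suffix of the geodesic from $w$ to $u$ has length at most $m$, and this suffix stays strictly outside $B_\mathrm{int}(v,n-1)$, since a revisit at some vertex $x$ would yield a strictly shorter $v$--$u$ path by concatenating a geodesic from $v$ to $x$ with the portion of the original path from $x$ to $u$, contradicting minimality.

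Armed with this, I would explore the intrinsic ball $B_\mathrm{int}(v,n-1)$ by revealing the statuses of all edges incident to it, so that $B_\mathrm{int}(v,n-1)$ and hence $\partial B_\mathrm{int}(v,n)$ are measurable with respect to the exploration, while by the percolation Markov property the statuses of edges with both endpoints outside this ball remain conditionally i.i.d.\ Bernoulli$(p)$. A union bound over the choice of $w\in \partial B_\mathrm{int}(v,n)$ combined with the monotonicity fact that restricting percolation to a subset of edges can only increase intrinsic distances then bounds the conditional connection probability by $\Bint_{p,m}(u,w)$, and summing over $w$ and over realizations of the exploration yields the stated inequality.

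The main (mild) subtlety, rather than an obstacle, is the geometric claim that the geodesic suffix stays outside $B_\mathrm{int}(v,n-1)$: without it, the Markov reduction would yield an intrinsic distance in a sub-percolation that is not directly comparable to the full-percolation quantity $\Bint_{p,m}$, and the whole argument would break.
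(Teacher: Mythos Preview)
Your argument is correct. The geometric observation that the suffix of a $v$--$u$ geodesic from the $n$th vertex $w$ onward uses only edges with both endpoints outside $B_\mathrm{int}(v,n-1)$ is exactly the right input (and follows more directly than you indicate: the $k$th vertex on a geodesic from $v$ has intrinsic distance exactly $k$ from $v$, so every vertex on the suffix lies at distance $\geq n$). Given this, the exploration of all edges incident to $B_\mathrm{int}(v,n-1)$ determines $\partial B_\mathrm{int}(v,n)$, the unrevealed edges are conditionally i.i.d., and your union bound plus the monotonicity of intrinsic distance under edge deletion gives the inequality.

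The paper proceeds differently: rather than using the Markov property, it applies Reimer's inequality. With the roles $(u,v,w)$ permuted, the paper observes that the subgraph induced by $B_\mathrm{int}(u,n)$ together with all \emph{closed} edges incident to this ball is a witness for $\{v\in\partial B_\mathrm{int}(u,n)\}$, and that the geodesic suffix from $v$ to $w$ is a disjoint witness for $\{w\in B_\mathrm{int}(v,m)\}$; Reimer then yields the product bound directly. The two arguments rest on the same geometric fact --- the suffix is disjoint from the edges determining the ball --- but yours is more elementary in that it avoids Reimer's inequality (a nontrivial theorem) in favour of the spatial Markov property, at the cost of slightly more bookkeeping. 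The paper's version is shorter and also illustrates how Reimer can be used as a clean substitute for exploration arguments in settings where one of the events is not monotone.
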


Since $\Bint_{p,n+m}=\Bint_{p,n-1}+\Aint_{p,n,n+m}$ for every $n,m \geq 0$, \cref{lem:intrinsicsubmult} implies that if $G$ is a connected, locally finite graph then the submultiplicative-type inequality
\begin{equation}
\label{eq:Bint_submult}
\Bint_{p,n+m} \opleq \Bint_{p,m}\Sint_{p,n} + \Bint_{p,n-1} \opleq \Bint_{p,m} \Bint_{p,n}
\end{equation}
holds for every $p\in [0,1]$ and $m,n\geq 0$.

\begin{remark}
It may be instructive for the reader to reflect on why Reimer's inequality does \emph{not} imply the submultiplicative inequality $\Cint_{p,n+m} \opleq \Cint_{p,n}\Cint_{p,m}$.
\end{remark}

\begin{proof}
For each three vertices $u,v,w$, let $\sG_{n,m}(u,v,w)$ be the event that $w$ is connected to $u$ with $n \leq d_\mathrm{int}(u,w)\leq n+m$ and that $v$ is the $n$th vertex on some geodesic from $u$ to $w$ in $G[p]$. The definitions ensure that
\begin{align*}\Aint_{p,n,n+m}(u,w)\leq \sum_{v\in V}\bP_p\left[\sG_{n,m}(u,v,w)\right].
\end{align*}
We now claim that 
\[\sG_{n,m}(u,v,w) \subseteq \{v \in \partial B_\mathrm{int}(u,n) \} \circ \{w \in B_\mathrm{int}(v,m) \}. \]
Indeed, on the event $\sG_{n,m}(u,v,w)$, the subgraph of $G$ induced by the intrinsic ball of radius $n$ around $u$ together with all the closed edges incident to this ball is a witness for $\{v \in \partial B_\mathrm{int}(u,n) \}$, while any geodesic in $G[p]$ from $v$ to $w$ is a witness for $\{w \in B_\mathrm{int}(v,m) \}$ that is disjoint from the former witness. Thus, we can apply Reimer's inequality to obtain that
\begin{align*}\Aint_{p,n,n+m}(u,w) \leq \sum_{v\in V} \P_p\left[\{v \in \partial B_\mathrm{int}(u,n) \} \circ \{w \in B_\mathrm{int}(v,m) \}\right] \leq \sum_{v\in V}\Bint_{p,m}(v,w)\Sint_{p,n}(u,v),
\end{align*}
which is equivalent to the claimed inequality \eqref{eq:intrinsicsubmult}.
\end{proof}

We now prove \cref{prop:intdecay}. 
 The proof is quite different to that of \cref{prop:expdecay} since submultiplicativity is not available, and relies instead on a certain averaging trick that will be used again in the proof of \cref{prop:lowerball}.

\begin{proof}[Proof of \cref{prop:intdecay}]
Applying \eqref{eq:intrinsicsubmult} and using the trivial inequality $\Bint_{p,m}\opleq T_p$, we have that
\[
\Aint_{p,kn,(k+1)n} \opleq T_p \Sint_{p,r}
\]
for every $k,n\geq 1$ and $r\leq kn$. Averaging this inequality over $(k-1)n\leq r \leq kn$ we obtain that
\[
\Aint_{p,kn,(k+1)n} \opleq \frac{1}{n+1}T_p \Aint_{p,(k-1)n,kn}
\]
for every $k,n\geq 1$. Applying this bound inductively we deduce that
\[
\|\Aint_{p,kn,(k+1)n}\|_{q\to q} \leq \left[\frac{\|T_p\|_{q\to q}}{n+1}\right]^k \|T_p\|_{q\to q},
\]
from which the claimed bound can be deduced by similar reasoning to that used in the proof of \cref{prop:expdecay}. (We bound the resulting constant $e^{1/e}/(1-1/e)$ by $3$ for aesthetic reasons.) 
\end{proof}

\subsection{Mean-field lower bounds on norms of ball and sphere operators}
\label{subsec:meanfieldlowerbounds}

In this subsection, we note that methods similar to those used to derive \cref{prop:expdecay,prop:intdecay} can also be used to prove the following simple mean-field lower bounds. These bounds complement those of \cite[Corollary 2.5]{1804.10191}, which states that if $G$ is a connected, locally finite graph then
\begin{equation} \|T_p\|_{q\to q} \geq \frac{1-p}{\|A\|_{q\to q} (p_{q\to q}-p)}
\label{eq:meanfieldlowerboundT}
\end{equation}
for every $0 \leq p < p_{q\to q}$, where $A$ is the adjacency matrix. 
 Similarly to the way that that corollary is used to prove the main results of \cite{1804.10191}, one could potentially use \cref{prop:lowerball} to establish that $p_c<p_{2\to 2}$ by showing that, for example, $\| \Sint_{p_c,n}\|_{2\to 2} \to 0$ as $n\to\infty$.

\begin{prop}
\label{prop:lowerball}
Let $G$ be an infinite, connected, bounded degree graph. Then we have that
\begin{equation}
\label{eq:extrinsiclowerball}
\|B_{p_{q\to q},n}\|_{q\to q} \geq n+1 \quad \text{ and } \quad  \|S_{p_{q\to q},n}\|_{q\to q} \geq 1
\end{equation}
for every $n\geq 0$ and $q\in [1,\infty]$, and similarly that
\begin{equation}
\label{eq:intrinsiclowerball}
\|\Bint_{p_{q\to q},n}\|_{q\to q} \geq n+1 \quad \text{ and } \quad  \|\Sint_{p_{q\to q},n}\|_{q\to q} \geq 1
\end{equation}
for every $n\geq 0$ and $q\in [1,\infty]$.
\end{prop}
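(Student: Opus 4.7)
The plan is to prove all four bounds by a single strategy: assuming the bound fails, combine a Reimer/BK-type inequality already established in this section with the decomposition $T_p=B_{p,n-1}+C_{p,n}$ (respectively $T_p=\Bint_{p,n-1}+\Cint_{p,n}$) to derive an upper bound on $\|T_p\|_{q\to q}$ that is uniform over $p<p_{q\to q}$, contradicting the divergence $\|T_p\|_{q\to q}\to\infty$ as $p\uparrow p_{q\to q}$ provided by \eqref{eq:meanfieldlowerboundT}. The cases $n=0$ are trivial since the four matrices reduce to the identity, so I treat only $n\geq 1$. A preliminary observation is that the matrices in question are non-negative, continuous, and increasing in $p$ entry-wise, so by monotone convergence their $q\to q$ operator norms are non-decreasing and left-continuous in $p$; in particular, if one of the proposed bounds fails at $p_{q\to q}$, the same failure persists with the same constant for all $p\leq p_{q\to q}$.

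For the sphere bounds, suppose $\|S_{p_{q\to q},n}\|_{q\to q}=\alpha<1$. Setting $m=0$ in \cref{lem:SCsubmult} gives $C_{p,n}\opleq T_p\, S_{p,n}$, and combining with $T_p=B_{p,n-1}+C_{p,n}$ yields
\[
\|T_p\|_{q\to q}\leq\|B_{p,n-1}\|_{q\to q}+\alpha\,\|T_p\|_{q\to q}.
\]
Since $G$ has bounded degree and $B_{p,n-1}(u,v)\leq\mathbbm{1}(d(u,v)\leq n-1)$, the norm $\|B_{p,n-1}\|_{q\to q}$ is bounded uniformly in $p$ by the $q\to q$ norm of the indicator matrix of distance at most $n-1$, which is finite by the Schur test. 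Rearranging yields a $p$-uniform bound on $\|T_p\|_{q\to q}$, contradicting \eqref{eq:meanfieldlowerboundT}. The intrinsic analogue is handled identically once one records the Reimer-type inequality $\Cint_{p,n}\opleq T_p\,\Sint_{p,n}$ (proved by decomposing a geodesic in $G[p]$ at its $n$th vertex from $u$, exactly as in \cref{lem:intrinsicsubmult}) together with $\Bint_{p,n-1}\opleq\mathbbm{1}(d(\cdot,\cdot)\leq n-1)$, valid since $d\leq d_\mathrm{int}$.

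For the ball bounds I borrow the averaging trick from the proof of \cref{prop:expdecay}. Using $C_{p,n+m}\opleq C_{p,n+r}$ for $0\leq r\leq m$ together with \cref{lem:SCsubmult} and averaging over $r\in\{0,\ldots,m\}$ gives $(m+1)\,C_{p,n+m}\opleq C_{p,n}\sum_{r=0}^m S_{p,r}=C_{p,n}\,B_{p,m}$; setting $n=0$ and using $T_p=B_{p,m-1}+C_{p,m}$ produces the operator inequality
\[
T_p\opleq B_{p,m-1}+\frac{1}{m+1}\,T_p\, B_{p,m}.
\]
If $\|B_{p_{q\to q},m}\|_{q\to q}<m+1$, monotone convergence extends this strict inequality to all $p\leq p_{q\to q}$, and taking $q\to q$ norms and solving yields
\[
\|T_p\|_{q\to q}\leq\frac{(m+1)\,\|B_{p_{q\to q},m}\|_{q\to q}}{m+1-\|B_{p_{q\to q},m}\|_{q\to q}}
\]
uniformly in $p<p_{q\to q}$, again contradicting \eqref{eq:meanfieldlowerboundT}. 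The intrinsic variant uses the analogous inequality $\Cint_{p,n+m}\opleq\Cint_{p,m}\,\Sint_{p,n}$, proved by the same Reimer argument as \cref{lem:intrinsicsubmult}, with the identical averaging.

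I do not anticipate any serious obstacle. The only genuinely new ingredients beyond what is already available are the two intrinsic Reimer inequalities $\Cint_{p,n}\opleq T_p\Sint_{p,n}$ and $\Cint_{p,n+m}\opleq\Cint_{p,m}\Sint_{p,n}$, both of which should follow from arguments essentially identical to that of \cref{lem:intrinsicsubmult}; the remainder of the proof is a routine combination of the submultiplicative machinery already developed with monotone convergence and \eqref{eq:meanfieldlowerboundT}.
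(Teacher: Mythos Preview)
Your extrinsic arguments and the first intrinsic inequality $\Cint_{p,n}\opleq T_p\,\Sint_{p,n}$ are fine. The genuine gap is the second intrinsic Reimer inequality you propose, $\Cint_{p,n+m}\opleq \Cint_{p,m}\,\Sint_{p,n}$: this does \emph{not} follow from the argument of \cref{lem:intrinsicsubmult}, and the paper explicitly warns against exactly this in the remark following \eqref{eq:Bint_submult} (note that since $\Sint\opleq\Cint$, your inequality would imply $\Cint_{p,n+m}\opleq\Cint_{p,n}\Cint_{p,m}$, which is the inequality singled out there). The obstruction is that any witness for $\{v\leftrightarrow w,\ d_\mathrm{int}(v,w)\geq m\}$ must certify that every $G$-path of length $<m$ from $v$ to $w$ contains a closed edge, and such closed edges may lie inside or on the boundary of $B_\mathrm{int}(u,n)$ and hence already belong to the witness for $\{d_\mathrm{int}(u,v)=n\}$. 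In \cref{lem:intrinsicsubmult} the second event $\{w\in B_\mathrm{int}(v,m)\}$ is \emph{increasing} and is witnessed by a single open path, which is precisely what makes disjointness go through; once the second event involves a lower bound on an intrinsic distance, this breaks.

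The paper avoids this obstacle entirely: instead of seeking a submultiplicative inequality for $\Cint$, it proves the single inequality $T_{p,\Lambda}\opleq\Bint_{p,n-1}+T_{p,\Lambda}\,\Sint_{p,n}$ (the same Reimer argument, with a finite-volume $T_{p,\Lambda}$ so that all norms are finite and rearrangement is legal), sums it over $n$ ranging in a finite set $K$, and rearranges. Taking $K=\{n\}$ gives the sphere bound and $K=\{0,\ldots,n\}$ gives the ball bound in one stroke. Your intrinsic sphere argument already essentially has this form, so summing \emph{that} inequality over $n$ would rescue your ball case. A smaller point: $\Sint_{p,n}(u,v)=\bP_p(d_\mathrm{int}(u,v)=n)$ is not monotone in $p$, since opening edges can push $d_\mathrm{int}$ below $n$; so your ``persists for all $p\leq p_{q\to q}$'' step needs continuity (the matrix is supported on $\{d\leq n\}$ with polynomial entries) rather than monotonicity in the intrinsic sphere case.
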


The proof of \cref{prop:lowerball} is inspired by the proof of \cite[Theorem 1.1]{duminil2015new}. Note that the sphere bounds do not obviously imply the ball bounds when $q\notin \{1,\infty\}$.

\begin{proof}
We will prove \eqref{eq:intrinsiclowerball}, the proof of \eqref{eq:extrinsiclowerball} being similar. 
 Let $\Lambda \subset E$ be finite. We write $\{ u \xleftrightarrow{\Lambda} v\}$ for the event that $u$ and $v$ are connected by an open path using only edges of $\Lambda$, and for each $p\in [0,1]$ define the matrix $T_{p,\Lambda}(u,v)=\P_p(u \xleftrightarrow{\Lambda} v)$.
Note that, since $G$ has bounded degrees and $\Bint_{p,n}$ is symmetric, we have by the Riesz-Thorin theorem that 
\[
\|\Bint_{p,n}\|_{q\to q} \leq  \|\Bint_{p,n}\|_{1\to 1}  \leq  \sup_{v\in V} |B(v,n)|<\infty
\]
for every $n\geq 0$ and $q\in [1,\infty]$, and similarly that $\|T_{p,\Lambda}\|_{q\to q}\leq \|T_{p,\Lambda}\|_{1\to 1}<\infty$ for every $\Lambda \subset E$ finite.
 Applying Reimer's inequality as in the proof of \cref{lem:intrinsicsubmult}, we obtain that 
 \begin{align*}
T_{p,\Lambda}(u,v) &= \P_p\bigl(u \xleftrightarrow{\Lambda} v \text{ and $d_\mathrm{int}(u,v) \leq n-1$}\bigr) + \P_p\bigl(u \xleftrightarrow{\Lambda} v \text{ and $d_\mathrm{int}(u,v) \geq n$}\bigr) \\
 &\leq \P_p\bigl(u \xleftrightarrow{\Lambda} v \text{ and $d_\mathrm{int}(u,v) \leq n-1$}\bigr)
+  \sum_{w \in V} \P_p\Bigl( \{ u\xleftrightarrow{\Lambda} w\bigr\} \circ \bigl\{w \leftrightarrow v \text{ and } d_\mathrm{int}(w,v)=n\}  \Bigr)\\ &\leq \Bint_{p,n-1}(u,v) + \sum_{w\in V}T_{p,\Lambda}(u,w)\Sint_{p,n}(w,v),
 \end{align*}
 which is equivalent to the bound
\begin{equation}
\label{eq:TpLambda}
T_{p,\Lambda} \opleq \Bint_{p,n-1} + T_{p,\Lambda} \Sint_{p,n}.
\end{equation}
Let $K$ be a finite subset of $\N=\{0,1,\ldots\}$. Summing \eqref{eq:TpLambda} over $K$ and then taking norms, we obtain that 
\begin{equation}
\label{eq:beforearrangement}
|K|\cdot \|T_{p,\Lambda}\|_{q\to q} \leq \Bigl\|\sum_{n\in K} \Bint_{p,{n-1}}\Bigr\|_{q\to q} + \|T_{p,\Lambda}\|_{q\to q} \Bigl\|\sum_{n \in K} \Sint_{p,n}\Bigr\|_{q\to q}
\end{equation}
Suppose that $\|\sum_{n\in K} \Sint_{p,n}\|_{q\to q} < |K|$. Since every term in \eqref{eq:beforearrangement} is finite, we may rearrange to obtain that
\[
\|T_{p,\Lambda}\|_{q\to q} \leq \myfrac[0.3em]{\bigl\|\sum_{n\in K} \Bint_{p,{n-1}}\bigr\|_{q\to q}}{|K|-\bigl\|\sum_{n \in K} \Sint_{p,n}\bigr\|_{q\to q}}<\infty,
\]
and taking the limit as $\Lambda$ exhausts $E$ we obtain that $\|T_{p}\|_{q\to q}<\infty$ 
for every $p\in [0,1]$ such that $\|\sum_{n\in K} \Sint_{p,n}\|_{q\to q} < |K|$.
Since $\|T_{p_{q\to q}}\|_{q\to q}=\infty$ for every $q\in [1,\infty]$, we deduce that 
\begin{equation}\Bigl\|\sum_{n \in K} \Sint_{p_{q\to q},n}\Bigr\|_{q\to q} \geq |K| \qquad  \text{for every $q\in [1,\infty]$ and $K\subseteq \N$}.
\end{equation} 
The claimed inequalities \eqref{eq:intrinsiclowerball} follow by taking $K=\{0,\ldots,n\}$ and $K=\{n\}$ respectively.
\end{proof}






\section{Norm exponents}
\label{sec:norm_exponents}

In \cite[Proposition 2.3]{1804.10191}, we proved that if $G$ is quasi-transitive and $p_c(G)<p_{2\to 2}(G)$ then $p_c(G)<p_{q\to q}(G)$ for every $q\in (1,\infty)$. Our next result establishes a sharp quantitative\footnote{While the proof of \cite[Proposition 2.3]{1804.10191} can also be made quantitative, the best bound that we were able to prove via that approach was of the form $\|T_{p_c}\|_{q\to q} \leq q^C$ for some possibly very large constant $C$ depending on $\|T_{p_c}\|_{2\to 2}$.} version of this fact, which will yield very strong quantitative information about the critical two-point function.

\begin{samepage}
\begin{thm} 
\label{thm:qtoq_exponents}Let $G$ be a connected, locally finite, nonamenable, quasi-transitive graph, and suppose that $p_c(G)<p_{2\to 2}(G)$. Then 
\[
\|T_{p_c}\|_{q\to q} \asymp 
\begin{cases}
q & \text{as } q \uparrow \infty\\
(q-1)^{-1} & \text{as } q \downarrow 1
\end{cases}
\quad \text{ and }\quad
p_{q\to q} -p_c \asymp 
\begin{cases}
 q^{-1} & \text{as }q \uparrow \infty\\
 q-1 & \text{as }q \downarrow 1.
\end{cases}
\]
\end{thm}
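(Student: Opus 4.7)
The plan is to exploit the symmetry of $T_{p_c}$: since $T_{p_c}$ is a nonnegative symmetric matrix, duality gives $\|T_{p_c}\|_{q\to q} = \|T_{p_c}\|_{q'\to q'}$ and $p_{q\to q} = p_{q'\to q'}$ for conjugate exponents $1/q+1/q'=1$. Under the substitution $q\mapsto q/(q-1)$ the four asymptotics collapse into two equivalent pairs, so it suffices to prove $\|T_{p_c}\|_{q\to q}\asymp (q-1)^{-1}$ and $p_{q\to q}-p_c\asymp q-1$ as $q\downarrow 1$.

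The upper bound $\|T_{p_c}\|_{q\to q}\leq C(q-1)^{-1}$ is the main technical step, and I expect it to be the main obstacle. My plan is to use Marcinkiewicz real interpolation: combine the strong-$(2,2)$ bound from the hypothesis with a (restricted) weak-type $(1,1)$ endpoint for $T_{p_c}$. The Marcinkiewicz constant between these endpoints blows up precisely at rate $(q-1)^{-1}$ as $q\downarrow 1$, matching the target. To establish the weak-$(1,1)$ bound I would combine the exponential decay $\tau_{p_c}(u,v_0)\leq C_1 e^{-c_1 d(u,v_0)}$ from \cref{thm:p2to2pexp} with the exponential volume growth of balls in the quasi-transitive graph. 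The central difficulty is that a priori the decay exponent $c_1 = (e\|T_{p_c}\|_{2\to 2})^{-1}$ need not exceed the ball-growth rate $\log\mu$; proving such a comparison, or bypassing it via a restricted-weak-type version of the interpolation argument applied to indicator inputs, is where the real work lies. As a backup, one could perform a layered decomposition $T_{p_c} = \sum_n S_{p_c, n}$ and bound each $\|S_{p_c, n}\|_{q\to q}$ by Riesz--Thorin between trivial $L^\infty$ bounds and the $L^2$ estimate of \cref{prop:expdecay}, then sum via Minkowski's inequality.

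For the lower bound $\|T_{p_c}\|_{q\to q}\gtrsim (q-1)^{-1}$, I would prove the equivalent statement $\|T_{p_c}\|_{q\to q}\gtrsim q$ as $q\uparrow\infty$ by testing on $f = \mathbbm{1}_{B(v_0,n)}$: then $\|f\|_q = |B(v_0,n)|^{1/q}\lesssim \mu^{n/q}$, while $T_{p_c}f(v_0) = \E_{p_c}|K_{v_0}\cap B(v_0,n)|$. Lower-bounding the expected restricted cluster size linearly in $n$---most naturally by comparison with the critical Galton--Watson-style lower bound $\E_{p_c}|B_\mathrm{int}(v_0,n)|\gtrsim n$ that follows from the triangle condition and the matching upper bound \eqref{eq:KNint}---and optimizing $n$ of order $q/\log\mu$ then yields $\|T_{p_c}\|_{q\to q}\gtrsim q$.

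Finally, the $p_{q\to q}$ asymptotics are derived from those of $\|T_{p_c}\|_{q\to q}$. The lower bound $p_{q\to q}-p_c\gtrsim q-1$ follows directly from the mean-field lower bound \eqref{eq:meanfieldlowerboundT} at $p=p_c$, combined with the upper bound on $\|T_{p_c}\|_{q\to q}$ and the elementary estimate $\|A\|_{q\to q}\leq d_{\max}$. The matching upper bound $p_{q\to q}-p_c\lesssim q-1$ requires $\|T_p\|_{q\to q}=\infty$ whenever $p>p_c+C(q-1)$; I would obtain it by extending the ball-test-function lower bound to $p>p_c$, using that the expected restricted cluster size diverges super-linearly above $p_c$, and choosing $n\asymp q$ so as to force the test ratio $\|T_p\mathbbm{1}_{B(v_0,n)}\|_q/\|\mathbbm{1}_{B(v_0,n)}\|_q$ to blow up.
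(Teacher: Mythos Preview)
Your symmetry reduction, your lower bound on $\|T_{p_c}\|_{q\to q}$ via testing on $\mathbbm{1}_{B(v_0,n)}$, and your derivation of $p_{q\to q}-p_c\gtrsim q-1$ from \eqref{eq:meanfieldlowerboundT} are all fine and close in spirit to the paper. The genuine gaps are in the two remaining bounds.

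\textbf{Upper bound on $\|T_{p_c}\|_{q\to q}$.} Your main plan via Marcinkiewicz interpolation requires a weak-type $(1,1)$ estimate for $T_{p_c}$, and you correctly flag that the comparison of the decay rate $\xi_{p_c}$ against the growth rate $\gamma$ is the obstacle. In fact the paper poses exactly this weak-$(1,1)$ estimate as an open question, so this route is not currently available. Your backup plan --- decompose $T_{p_c}=\sum_n S_{p_c,n}$ and interpolate each $S_{p_c,n}$ between $L^\infty$ (or $L^1$) and $L^2$ --- also fails as stated: for \emph{extrinsic} spheres, $\|S_{p_c,n}\|_{1\to 1}=\sup_v\E_{p_c}|K_v\cap\partial B(v,n)|$ has no a~priori bound better than $|\partial B(v,n)|$, which may grow exponentially and swamp the $e^{-c\theta n}$ contribution from the $L^2$ side when $\theta\asymp q-1$ is small. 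The paper's resolution is to run the same Riesz--Thorin argument on the \emph{intrinsic} spheres $\Sint_{p_c,n}$: the $L^2$ endpoint still decays exponentially by \cref{prop:intdecay}, while the $L^1$ endpoint is controlled by the Kozma--Nachmias bound $\sup_v\E_{p_c}|B_\mathrm{int}(v,n)|\leq C(n+1)$ (valid under the triangle condition). Summation by parts then gives $\|T_{p_c}\|_{q\to q}\lesssim \theta^{-1}\asymp (q-1)^{-1}$. The switch from extrinsic to intrinsic layering is the key missing idea in your proposal.

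\textbf{Upper bound on $p_{q\to q}-p_c$.} Your sketch here is too vague to evaluate: you need, quantitatively, that $\E_p|K_v\cap B(v,n)|$ grows at exponential rate at least $c(p-p_c)$, and this does not follow from a soft ``super-linear divergence'' statement. The paper obtains the bound by a different mechanism: it uses the monotonicity of $\log\bP_p(A)/\log p$ in $p$ to compare $S_{p,n}$ with $S_{p_c,n}$, combines this with a general norm-comparison lemma in $q$ (\cref{lem:changingqgeneral}) and the lower bound $\|S_{p_c,n}\|_{\infty\to\infty}\geq 1$ from \cref{prop:lowerball}, and shows directly that $\liminf_n \tfrac{1}{n}\log\|S_{p,n}\|_{q\to q}\geq 0$ once $p$ exceeds $p_c$ by an amount of order $q^{-1}$.
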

\end{samepage}
The proof of \cref{thm:qtoq_exponents} is given in the following two subsections. This theorem is complemented by the lower bound
\begin{equation}
\label{eq:general_norm_lower_bound}
\|T_p\|_{q\to q} \geq \frac{1-p}{\|A\|_{q\to q} (p_{q\to q}-p)} \qquad \text{ for every $0<p<p_{q\to q}$}
\end{equation}
which is proven in \cite[Corollary 2.6]{1804.10191} and holds on every infinite, connected, locally finite graph, where $A$ is the adjacency matrix of the graph. 
%

An interesting corollary of \cref{thm:qtoq_exponents} is as follows.

\begin{corollary} 
\label{cor:logbound}
Let $G$ be a connected, locally finite, nonamenable, quasi-transitive graph, and suppose that $p_c(G)<p_{2\to 2}(G)$. Then there exists a constant $C$ such that
\[
\bE_{p_c}|K_v \cap W| \leq C \log |W|
\]
for every $v\in V$ and every $W \subseteq V$ with $|W|\geq 2$. In particular, there exists a constant $C$ such that $\bE_{p_c}|K_v \cap B(v,n)| \leq Cn$ for every $n\geq 1$.
\end{corollary}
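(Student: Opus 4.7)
The plan is to interpret the quantity $\bE_{p_c}|K_v \cap W|$ as a bilinear pairing and apply H\"older's inequality, then optimize over $q$. Concretely, since
\[
\bE_{p_c}|K_v \cap W| = \sum_{u \in W} \tau_{p_c}(v,u) = \langle T_{p_c} \mathbbm{1}_W, \mathbbm{1}_v \rangle,
\]
H\"older's inequality with conjugate exponent $q' = q/(q-1)$ gives
\[
\bE_{p_c}|K_v \cap W| \leq \|T_{p_c}\|_{q\to q}\, \|\mathbbm{1}_W\|_q\, \|\mathbbm{1}_v\|_{q'} = \|T_{p_c}\|_{q\to q}\, |W|^{1/q}
\]
for every $q \in (1,\infty)$.

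The key input is the upper bound $\|T_{p_c}\|_{q\to q} \leq C_1 q$ as $q \uparrow \infty$ provided by \cref{thm:qtoq_exponents}, which is applicable since we have assumed $p_c < p_{2\to 2}$. Substituting this, I get $\bE_{p_c}|K_v \cap W| \leq C_1 q\, |W|^{1/q}$ for all sufficiently large $q$. I then optimize: the choice $q = \log |W|$ yields $|W|^{1/q} = e$, so provided $\log|W|$ is large enough (say, above the threshold beyond which the bound from \cref{thm:qtoq_exponents} is valid), I obtain
\[
\bE_{p_c}|K_v \cap W| \leq C_1 e \log |W|.
\]
For $W$ with $|W|$ bounded by the threshold (but $|W|\geq 2$), the trivial bound $\bE_{p_c}|K_v \cap W| \leq |W|$ combined with $\log|W|\geq \log 2$ allows absorption into a single constant $C$.

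For the second assertion, the bound $|B(v,n)| \leq \Delta^n$ for $\Delta$ the maximum degree gives $\log|B(v,n)| \leq n \log \Delta$, so the first part applied with $W = B(v,n)$ yields $\bE_{p_c}|K_v \cap B(v,n)| \leq C(\log \Delta)\, n$, as claimed. There is essentially no obstacle beyond correctly invoking \cref{thm:qtoq_exponents}; the only mild care needed is handling the small-$W$ regime where the asymptotic $\|T_{p_c}\|_{q\to q} \asymp q$ may not yet have kicked in.
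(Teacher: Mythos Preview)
Your proof is correct and follows essentially the same approach as the paper: write $\bE_{p_c}|K_v\cap W|=\langle T_{p_c}\mathbbm{1}_W,\mathbbm{1}_v\rangle$, apply H\"older, invoke \cref{thm:qtoq_exponents} to get $\|T_{p_c}\|_{q\to q}\leq C_1 q$, and set $q=\log|W|$. You add a bit more detail on the small-$|W|$ regime and the ball case, but the argument is the same.
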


\cref{prop:lowerball} shows that the bound $\bE_{p_c}|K_v \cap B(v,n)| \leq Cn$  is sharp up to the choice of constant.

\begin{proof}[Proof of \cref{cor:logbound}]
By H\"older's inequality, we have that
\[
\bE_{p_c}|K_v \cap W| = \langle T_{p_c} \mathbbm{1}_W, \mathbbm{1}_v\rangle \leq \|T_{p_c}\|_{q\to q} \|\mathbbm{1}_W \|_{q} \|\mathbbm{1}_v\|_{\frac{q}{q-1}} = \|T_{p_c}\|_{q\to q} |W|^{1/q}.
\]
The claim follows by taking $q = \log |W|$ and applying \cref{thm:qtoq_exponents}.
\end{proof}

A further corollary of \cref{thm:qtoq_exponents} concerns the  asymptotic density of slightly supercritical clusters. Let $G=(V,E)$ be a connected, locally finite, quasi-transitive graph.  For each $0<p\leq 1$, we define 
the \textbf{annealed upper logarithmic density} of clusters in $G[p]$ by 
\[
\delta_{\log}(p)=\limsup_{n\to\infty} \sup_{v\in V}\frac{\log \bE_p|K_v \cap B(v,n)|}{\log |B(v,n)|},
\]
where $B(v,n)$ denotes the ball of radius $n$ around $v$ in $G$. Note that Markov's inequality implies
\[
\limsup_{n\to\infty} \frac{\log |K_v \cap B(v,n)|}{\log |B(v,n)|} \leq \delta_{\log}(p)
\]
 that for every $v\in V$ almost surely. 

\begin{corollary}
\label{cor:density}
Let $G$ be a connected, locally finite, nonamenable, quasi-transitive graph, and suppose that $p_c(G)<p_{2\to 2}(G)$. Then
$\delta_{\log}(p) \asymp p-p_c$ as $p\downarrow p_c.$
\end{corollary}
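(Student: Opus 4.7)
The plan is to deduce matching upper and lower bounds on $\delta_{\log}(p)$ from the sharp asymptotics for $\|T_{p}\|_{q\to q}$ and $p_{q\to q}$ given by \cref{thm:qtoq_exponents}, by establishing the two-sided comparison
\[
\|T_p\|_{q\to q} \asymp \sup_{n\geq 0}\frac{\bE_p|K_o \cap B(o,n)|}{|B(o,n)|^{1/q}}
\]
(with constants depending on the graph but not on $p$ or $q$). The upper bound $\delta_{\log}(p) \lesssim p-p_c$ is the straightforward half: exactly as in the proof of \cref{cor:logbound}, H\"older's inequality gives
\[
\bE_p|K_v \cap B(v,n)| = \langle T_p \mathbbm{1}_{B(v,n)}, \mathbbm{1}_v\rangle \leq \|T_p\|_{q\to q}\,|B(v,n)|^{1/q}
\]
whenever $p<p_{q\to q}$; taking logarithms, dividing by $\log|B(v,n)|$ (positive and tending to infinity by nonamenability), and sending $n\to\infty$ yields $\delta_{\log}(p)\leq 1/q$. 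Since \cref{thm:qtoq_exponents} permits us to take $q$ of order $(p-p_c)^{-1}$ while maintaining $p<p_{q\to q}$, this gives $\delta_{\log}(p)\lesssim p-p_c$.

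The main obstacle is the matching lower bound, for which we need the reverse half of the comparison above. I would establish it by testing $T_p$ against $f=\mathbbm{1}_{B(o,n)}$: for $u \in B(o,\lfloor n/2\rfloor)$ the inclusion $B(u,\lfloor n/2\rfloor)\subseteq B(o,n)$ together with transitivity gives $(T_pf)(u) = \bE_p|K_u \cap B(o,n)| \geq \bE_p|K_o \cap B(o,\lfloor n/2\rfloor)|$. Summing the $q$th powers over $u\in B(o,\lfloor n/2\rfloor)$ and combining with the elementary submultiplicative bound $|B(o,n)| \lesssim |B(o,\lceil n/2\rceil)|^2$ (every $v \in B(o,n)$ lies within $\lceil n/2\rceil$ of the midpoint of an $o$-$v$ geodesic, which itself lies within $\lceil n/2\rceil$ of $o$) yields
\[
\|T_p\|_{q\to q} \geq \frac{\|T_pf\|_q}{\|f\|_q} \gtrsim \frac{\bE_p|K_o \cap B(o,\lfloor n/2\rfloor)|}{|B(o,\lfloor n/2\rfloor)|^{1/q}}
\]
uniformly in $n\geq 1$ and $q\in[1,\infty]$.

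To finish, fix $p\in(p_c, p_{2\to 2})$ and take $q$ slightly larger than $q^*(p):=\sup\{q\geq 1:p<p_{q\to q}\}$, so that $\|T_p\|_{q\to q}=\infty$; the displayed inequality then forces $\bE_p|K_o \cap B(o,\lfloor n/2\rfloor)|/|B(o,\lfloor n/2\rfloor)|^{1/q}$ to be unbounded in $n$, and hence $\delta_{\log}(p)\geq 1/q$. Letting $q\downarrow q^*(p)$ and invoking \cref{thm:qtoq_exponents} (which gives $q^*(p)\asymp (p-p_c)^{-1}$) yields $\delta_{\log}(p)\gtrsim p-p_c$, completing the proof. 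For the quasi-transitive extension one replaces $\bE_p|K_o\cap B(o,m)|$ by $\sup_v \bE_p|K_v\cap B(v,m)|$ in the last displayed inequality and uses the finiteness of the number of vertex orbits together with the bounded-distortion comparability of balls across orbits, so that the whole argument survives with only constants changing.
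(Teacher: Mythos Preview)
Your upper bound is correct and identical to the paper's. The lower bound, however, has a genuine gap: the test-function argument you give does \emph{not} establish the ``reverse half'' of the comparison. Testing $T_p$ against $f=\mathbbm{1}_{B(o,n)}$ can only produce a \emph{lower} bound on the operator norm, i.e.\ an inequality of the form
\[
\|T_p\|_{q\to q}\;\ge\;\frac{\|T_pf\|_q}{\|f\|_q}\;\gtrsim\;\frac{\bE_p|K_o\cap B(o,m)|}{|B(o,m)|^{1/q}}.
\]
This is the \emph{same} direction as H\"older, not its reverse. Consequently, knowing $\|T_p\|_{q\to q}=\infty$ for $q>q^*(p)$ tells you nothing about whether $\sup_m \bE_p|K_o\cap B(o,m)|/|B(o,m)|^{1/q}$ is unbounded, and your step~4 does not follow. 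To make your strategy work you would need an inequality of the form $\|T_p\|_{q\to q}\lesssim\sup_m(\cdots)$, which amounts to showing that the operator norm of $T_p$ is essentially attained on ball indicators; this is a nontrivial statement that neither H\"older nor any single test function can supply.

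The paper's lower bound proceeds by a completely different route, avoiding operator norms altogether. It uses the elementary monotonicity of $\log\bP_p(A)/\log p$ in $p$ for increasing events $A$ (equation \eqref{eq:changingexponential}) to compare the two-point function at $p>p_c$ with that at $p_c$: since $\tau_{p_c}$ decays like $e^{-\xi_{p_c}n}$ with $\xi_{p_c}>0$ (\cref{thm:p2to2pexp}), one gets
\[
\bE_{p}|K_v\cap\partial B(v,n)|\;\ge\;\bE_{p_c}|K_v\cap\partial B(v,n)|\cdot e^{c(p-p_c)n+o(n)}.
\]
The mean-field lower bound $\|S_{p_c,n}\|_{1\to 1}\ge 1$ from \cref{prop:lowerball} ensures the first factor does not vanish, so $\bE_p|K_v\cap B(v,n)|$ grows at least like $e^{c(p-p_c)n}$, giving $\delta_{\log}(p)\ge c(p-p_c)/\gamma$ directly.
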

%

 \cref{cor:density} should be compared with Lalley's result that in slightly supercritical percolation on tessellations of the hyperbolic plane, the a.s.\ Hausdorff dimension of the set of ideal-boundary accumulation points of an infinite percolation cluster tends to zero as $p_c$ is approached from above \cite{MR1873136}. (Indeed, it is possible to apply \cref{thm:qtoq_exponents}  to derive a quantitative version of Lalley's result showing that this dimension is  $\Theta(\eps)$ at $p_c+\eps$.) The proof of \cref{cor:density} is given after the proof of \cref{thm:qtoq_exponents}.



\medskip

We now begin to work towards the proof of \cref{thm:qtoq_exponents}.
We first lower bound the rate that $\|T_{p_{q\to q}}\|_{q'\to q'}$ blows up as $q' \uparrow q$. The resulting estimate can be thought of as a mean-field lower bound. Given a graph $G$, we define
\[
\gamma=\gamma(G)=\lim_{n\to\infty}\frac{1}{n}\sup_{v\in V}\log |B(v,n)|.
\]
The fact that this limit exists follows by Fekete's lemma, since the sequence $\sup_{v\in V} |B(v,n)|$ is submultiplicative. Moreover, if $G$ has degrees bounded by $M$ then we  have that $\gamma(G) \leq \log(M-1)<\infty$.

\begin{prop}
\label{prop:Tpqqq'q'_lower}
Let $G$ be a connected, bounded degree graph, let $q\in (2,\infty]$ and let $q' \in (2,q)$. Then
\[
\|T_{p_{q\to q}}\|_{q'\to q'} \geq  \frac{qq'}{e(q-q')\gamma},
\]
with the convention that the right hand side is equal to $q'/e\gamma$ when $q=\infty$.
\end{prop}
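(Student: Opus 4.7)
The plan is to leverage the exponential $L^{q'}$-decay of \cref{prop:expdecay} (applied with parameter $q'$ in place of $q$) against the volume bound $\sup_v|B(v,n)|\leq e^{(\gamma+\varepsilon)n}$ via Riesz--Thorin interpolation on the sphere matrices $S_{p,n}$, obtaining a quantitative estimate of the shape $\|T_p\|_{q\to q}\leq F\bigl(\|T_p\|_{q'\to q'}\bigr)$ for $p<p_{q\to q}$, with $F(N)$ blowing up as $N$ approaches the claimed threshold $c=qq'/(e\gamma(q-q'))$. Monotonicity and (left-)continuity of $p\mapsto\|T_p\|_{q'\to q'}$ then promote this to the desired lower bound at $p=p_{q\to q}$.

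Concretely: fix $p<p_{q\to q}$ and set $N=\|T_p\|_{q'\to q'}$. \cref{prop:expdecay} yields $\|C_{p,n}\|_{q'\to q'}\leq 2Ne^{-n/(eN)}$, from which we inherit the sphere bound $\|S_{p,n}\|_{q'\to q'}\leq 2Ne^{-n/(eN)}$ (since $S_{p,n}\opleq C_{p,n}$) and, by bounding an entry by the corresponding $L^{q'}$-operator norm applied to a basis vector, the pointwise decay $\tau_p(u,v)\leq 2Ne^{-d(u,v)/(eN)}$. For the complementary estimate, since $S_{p,n}$ is symmetric, supported on pairs at distance exactly $n$, and has entries at most one, we obtain $\|S_{p,n}\|_{\infty\to\infty}=\|S_{p,n}\|_{1\to 1}\leq \sup_v|\partial B(v,n)|\leq e^{(\gamma+\varepsilon)n}$ for all $n$ sufficiently large.

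Riesz--Thorin interpolation between $L^{q'}$ and $L^\infty$ gives
\[
\|S_{p,n}\|_{q\to q}\leq \|S_{p,n}\|_{q'\to q'}^{q'/q}\|S_{p,n}\|_{\infty\to\infty}^{1-q'/q},
\]
and the triangle inequality $\|T_p\|_{q\to q}\leq \sum_n\|S_{p,n}\|_{q\to q}$ reduces matters to convergence of a geometric series, which holds whenever $N$ is strictly less than a certain threshold and delivers a bound $F(N)$ blowing up as $N$ approaches that threshold. Sending $p\uparrow p_{q\to q}$ in this estimate (so that $\|T_p\|_{q\to q}$ is forced to diverge) and using monotonicity together with the left-continuity of $p\mapsto \|T_p\|_{q'\to q'}$ (which follows from its lower semi-continuity, itself a consequence of being a supremum of continuous-in-$p$ functions), we deduce the lower bound $\|T_{p_{q\to q}}\|_{q'\to q'}\geq c$, and let $\varepsilon\downarrow 0$. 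The $q=\infty$ case follows either by running the argument directly with $q=\infty$ or by taking $q\to\infty$ in the finite-$q$ bound, using that $p_{\infty\to\infty}=p_c$.

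The main difficulty is arranging the interpolation so that the critical threshold lands on exactly $qq'/(e\gamma(q-q'))$, rather than the weaker constants $q'/(e\gamma(q-q'))$ or $q/(e\gamma(q-q'))$ that come out of a single naive interpolation. Closing this factor-of-$q$ or factor-of-$q'$ gap requires exploiting both the pointwise decay $\tau_p(u,v)\leq 2Ne^{-d(u,v)/(eN)}$ (which sharpens the $L^\infty$ bound on $S_{p,n}$ to $2Ne^{(\gamma+\varepsilon-1/(eN))n}$) and the log-convexity of $r\mapsto \log\|T_p\|_{r\to r}$ on $[1,\infty]$, which allows one to replace $q'$ in the interpolation by an $r$ closer to $2$ (where $\|T_p\|_{r\to r}$ is smallest by log-convexity and the duality $\|T_p\|_{r\to r}=\|T_p\|_{r^*\to r^*}$). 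Balancing these refinements against the volume growth is the purely computational core of the argument; the probabilistic input is already exhausted by \cref{prop:expdecay} and the bounded-degree volume estimate.
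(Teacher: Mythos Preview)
Your proposal has a genuine gap: the Riesz--Thorin step cannot produce the sharp constant $qq'/(e\gamma(q-q'))$, and the fixes you sketch do not close it. Interpolating between $L^{q'}$ and $L^\infty$ gives $\|S_{p,n}\|_{q\to q}\le \|S_{p,n}\|_{q'\to q'}^{q'/q}\|S_{p,n}\|_{\infty\to\infty}^{(q-q')/q}$, and after your refinement (a) (replacing the crude $L^\infty$ bound by $2Ne^{(\gamma+\varepsilon-1/(eN))n}$) the exponent in $n$ collapses to $-1/(eN)+\gamma(q-q')/q$, yielding the threshold $q/(e\gamma(q-q'))$ --- still off by a factor of $q'$. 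Refinement (b) does not help either: log-convexity together with the duality $\|T_p\|_{r\to r}=\|T_p\|_{r^*\to r^*}$ only controls $\|T_p\|_{r\to r}$ for $r$ in the interval $[q'^*,q']$, and since $q>q'$ this gives no new leverage in the interpolation toward $q$. The phrase ``balancing these refinements \ldots\ is the purely computational core'' papers over what is in fact the missing idea.

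The paper's proof is quite different and avoids the limiting argument entirely. It works directly at $p=p_{q\to q}$, combining the mean-field sphere lower bound $\|S_{p_{q\to q},n}\|_{q\to q}\ge 1$ (\cref{prop:lowerball}) with a H\"older-based norm comparison (\cref{lem:changingqgeneral}) that yields the sharper exponent $(q-q')/(qq')$ rather than the Riesz--Thorin exponent $(q-q')/q$:
\[
\|S_{p_{q\to q},n}\|_{q'\to q'} \ \ge\ \|S_{p_{q\to q},n}\|_{q\to q}\,\bigl(\sup_v|\partial B(v,n)|\bigr)^{-(q-q')/(qq')} \ \ge\ e^{-\gamma n(q-q')/(qq')+o(n)}.
\]
This upper-bounds the decay rate $\eta_{p_{q\to q},q'}$ by $\gamma(q-q')/(qq')$, and combining with $\eta_{p_{q\to q},q'}\ge 1/(e\|T_{p_{q\to q}}\|_{q'\to q'})$ from \cref{prop:expdecay} finishes. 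The crucial point is that \cref{lem:changingqgeneral} is \emph{not} Riesz--Thorin: its proof bounds $\sum_u M(v,u)\le \|M\|_{q_2\to q_2}\,k^{1/q_2}$ and then absorbs the self-referential $\|M\|_{q_2\to q_2}^{q_2-q_1}$ factor on the left, and this bootstrap is precisely what manufactures the extra $1/q_1$ in the exponent. If you want to rescue your limiting scheme, replacing Riesz--Thorin by \cref{lem:changingqgeneral} in your interpolation step would deliver the correct threshold --- but at that point you may as well invoke \cref{prop:lowerball} at $p_{q\to q}$ directly and dispense with the continuity argument.
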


The proof of \cref{prop:Tpqqq'q'_lower} will apply the following elementary lemma.
\begin{lemma}
\label{lem:changingqgeneral}
Let $G=(V,E)$ be a graph, and let $M \in [0,\infty]^{V^2}$. Then
\[
\|M\|_{q_2\to q_2} \leq \|M\|_{q_1\to q_1} \left[\sup_{v\in V} \left|\{u: M(v,u) \neq 0 \}\right|\right]^{(q_2-q_1)/q_1q_2}
\]
for every $1\leq q_1 < q_2 \leq \infty$, with the convention that $(q_2-q_1)/q_1q_2 = 1/q_1$ if $q_2=\infty$.
\end{lemma}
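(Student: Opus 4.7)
The plan is to reduce the lemma to a combination of a trivial endpoint estimate and the Riesz--Thorin interpolation theorem. Write $N = \sup_{v\in V} |\{u : M(v,u) \neq 0\}|$ and $S_v = \{u : M(v,u) \neq 0\}$. We may assume $\|M\|_{q_1 \to q_1} < \infty$ and $N < \infty$, since otherwise the claimed bound is vacuous.

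My first step would be to establish the endpoint bound
\[
\|M\|_{\infty \to \infty} \leq \|M\|_{q_1 \to q_1}\, N^{1/q_1}.
\]
This follows by testing $M$ against $\mathbbm{1}_{S_v}$: for each vertex $v$,
\[
\sum_{u} M(v,u) = (M \mathbbm{1}_{S_v})(v) \leq \|M \mathbbm{1}_{S_v}\|_{\infty} \leq \|M\mathbbm{1}_{S_v}\|_{q_1} \leq \|M\|_{q_1 \to q_1} \cdot |S_v|^{1/q_1},
\]
using that the $\ell^{q_1}$ norm dominates the $\ell^\infty$ norm on sequence spaces together with $|S_v| \leq N$. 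Taking the supremum over $v$ yields the bound, and this already settles the $q_2 = \infty$ case of the lemma.

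For $1 \leq q_1 < q_2 < \infty$, I would then apply the Riesz--Thorin interpolation theorem with endpoints $L^{q_1} \to L^{q_1}$ and $L^\infty \to L^\infty$: since $1/q_2 = (q_1/q_2)(1/q_1) + (1 - q_1/q_2)(1/\infty)$, we get
\[
\|M\|_{q_2 \to q_2} \leq \|M\|_{q_1 \to q_1}^{q_1/q_2}\; \|M\|_{\infty \to \infty}^{(q_2 - q_1)/q_2}.
\]
Substituting the first-step bound and collecting exponents (the $\|M\|_{q_1\to q_1}$ factor has exponent $q_1/q_2 + (q_2 - q_1)/q_2 = 1$, and the $N$ factor has exponent $(1/q_1) \cdot (q_2 - q_1)/q_2 = (q_2 - q_1)/(q_1 q_2)$) yields the claimed inequality.

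There is essentially no obstacle: both ingredients are standard, and Riesz--Thorin applies to $M$ treated as a linear operator on complex-valued functions, with the same operator norm as on nonnegative functions (since $|Mf| \leq M|f|$ pointwise). If one prefers to avoid citing Riesz--Thorin, a direct Hölder argument gives the same result in one pass: split $M(v,u) f(u) = M(v,u)^{q_1/q_2}\, f(u) \cdot M(v,u)^{(q_2-q_1)/q_2}\,\mathbbm{1}_{M(v,u)>0}$ and apply Hölder with exponents $q_2/q_1$ and $q_2/(q_2-q_1)$ in the $u$-sum; this reduces the estimate on $\|Mf\|_{q_2}$ to $\|Mg\|_{q_1}$ with $g = f^{q_2/q_1}$ (of $L^{q_1}$ norm $\|f\|_{q_2}^{q_2/q_1}$) multiplied by a power of the row $\ell^1$ sum controlled by the first step, after which summing over $v$ and taking a $q_2$-th root gives exactly the stated inequality.
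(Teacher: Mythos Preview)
Your proof is correct, and both routes you outline work. The paper's own argument is closest to your alternative direct H\"older proof, but with one twist: instead of first establishing the $L^\infty\to L^\infty$ endpoint bound and then feeding it into the row-sum estimate, the paper bounds the row sum $\sum_u M(v,u) = \langle \mathbbm{1}_v, M I_v\rangle$ by $\|M\|_{q_2\to q_2}\,N^{1/q_2}$ (i.e., in terms of the very norm being estimated), and then divides through by $\|M\|_{q_2\to q_2}^{q_2-q_1}$ at the end. This bootstrap avoids any appeal to interpolation theory but tacitly requires $\|M\|_{q_2\to q_2}<\infty$ a priori; your route via the $L^\infty$ endpoint and Riesz--Thorin sidesteps that issue entirely and is arguably cleaner. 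The paper handles the $q_2=\infty$ case by taking a limit $q_2\uparrow\infty$ (their \cref{lem:dumb}), whereas your endpoint bound gives it directly.
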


The case $q_2=\infty$ will use the following similarly elementary fact.

\begin{lemma}
\label{lem:dumb}
Let $G=(V,E)$ be a graph, and let $M \in [0,\infty]^{V^2}$. Then
\[
\liminf_{q'\uparrow q} \|M\|_{q'\to q'} \geq \|M\|_{q\to q}
\]
for every $q \in (1,\infty]$.
\end{lemma}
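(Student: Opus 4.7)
The plan is to test the operator norm against finitely supported non-negative functions and apply Fatou's lemma. The first step is the reduction
\[
\|M\|_{q\to q} = \sup\left\{\frac{\|Mf\|_q}{\|f\|_q} : f\geq 0 \text{ finitely supported, } f\not\equiv 0\right\},
\]
which for $q<\infty$ follows by monotone convergence ($f\mathbbm{1}_{B_n}\uparrow f$ and $M(f\mathbbm{1}_{B_n})\uparrow Mf$ for $M,f\geq 0$), and for $q=\infty$ follows from the explicit formula $\|M\|_{\infty\to\infty}=\sup_v\sum_u M(v,u)$ together with testing against indicators $\mathbbm{1}_W$ of finite sets $W$ that exhaust $V$.

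Next I would establish two elementary facts about non-negative functions. For any finitely supported $f\geq 0$ and $q\in(1,\infty]$, we have $\|f\|_{q'}\to\|f\|_q$ as $q'\uparrow q$: when $q<\infty$ this is because $\|f\|_{q'}^{q'}=\sum_v f(v)^{q'}$ is a finite sum of continuous functions of $q'$, and when $q=\infty$ it is the standard fact that $\ell^{q'}$-norms of finitely supported functions converge to the $\ell^\infty$ norm. For arbitrary $g:V\to[0,\infty]$, I claim $\liminf_{q'\uparrow q}\|g\|_{q'}\geq\|g\|_q$. When $q<\infty$, the identity $g(v)^{q'}\to g(v)^q$ pointwise together with Fatou's lemma for counting measure gives $\liminf_{q'\uparrow q}\|g\|_{q'}^{q'}\geq\|g\|_q^q$; passing along subsequences on which $\|g\|_{q'_k}^{q'_k}$ converges in $[0,\infty]$ and using $q'_k\to q$ gives $\liminf\|g\|_{q'}\geq\|g\|_q$. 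When $q=\infty$, the bound $\|g\|_{q'}\geq g(v)$ for every $v$ yields $\liminf_{q'\uparrow\infty}\|g\|_{q'}\geq\sup_v g(v)=\|g\|_\infty$ directly.

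Combining these, for any finitely supported $f\geq 0$ with $\|f\|_q>0$ we obtain
\[
\liminf_{q'\uparrow q}\|M\|_{q'\to q'}\geq \liminf_{q'\uparrow q}\frac{\|Mf\|_{q'}}{\|f\|_{q'}}\geq \frac{\liminf_{q'\uparrow q}\|Mf\|_{q'}}{\lim_{q'\uparrow q}\|f\|_{q'}}\geq \frac{\|Mf\|_q}{\|f\|_q},
\]
where the middle inequality uses that $\|f\|_{q'}$ converges to a strictly positive limit. Taking the supremum over such $f$ and applying the reduction of Step 1 yields the claim.

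The only nuance is the passage from the power-$q'$ version of Fatou to the statement about $\|g\|_{q'}$ itself, since the exponent $1/q'$ varies with $q'$; this is handled cleanly by extracting subsequences on which $\|g\|_{q'_k}^{q'_k}$ converges in $[0,\infty]$ and noting that on any such subsequence $\|g\|_{q'_k}=(\|g\|_{q'_k}^{q'_k})^{1/q'_k}$ converges to the corresponding $1/q$-th power, which dominates $\|g\|_q$. Beyond this, the entire argument is routine and the handling of $q=\infty$ is parallel to but slightly different from $q<\infty$ throughout.
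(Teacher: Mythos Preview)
Your proof is correct and follows essentially the same approach as the paper: reduce to finitely supported non-negative test functions, use continuity of $\|f\|_{q'}$ for such $f$, and a lower-semicontinuity statement for $\|Mf\|_{q'}$. The only difference is that the paper obtains $\liminf_{q'\uparrow q}\|Mf\|_{q'}\geq\|Mf\|_q$ in one line by observing that on counting measure the $\ell^{q'}$-norm is monotone decreasing in $q'$ (so $\|Mf\|_{q'}\geq\|Mf\|_q$ for all $q'<q$), which makes your Fatou-plus-subsequence argument unnecessary.
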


\begin{proof}
We may assume that $\|M\|_{q\to q} >0$, since the claim is trivial otherwise. 
 Note that for each $f\in V^\R$, the norm $\|f\|_q$ is decreasing in $q$ for every $f\in V^\R$, and is continuous in $q\in [1,\infty]$ if $f$ is finitely supported (i.e., zero at all but finitely many vertices).
For each $a < \|M\|_{q\to q}$, there exists a finitely supported $f\in V^\R$ such that $\|M f\|_q/\|f\|_q \geq a$.
For such $f$ we have $\lim_{q'\uparrow q} \|f\|_{q'} = \|f\|_q$ and $\liminf_{q' \uparrow q} \|Mf\|_{q'}\geq \|Mf\|_q$. We deduce that $\liminf_{q'\uparrow q} \|M\|_{q'\to q'} \geq a$, and the claim follows since $a<\|M\|_{q\to q}$ was arbitrary.
\end{proof}

\begin{proof}[Proof of \cref{lem:changingqgeneral}]
We may assume that $\|M\|_{q_1\to q_1}<\infty$ and $\sup_{v\in V} \left|\{u: M(v,u) \neq 0 \}\right|<\infty$, since the claim is trivial otherwise. First suppose that $q_2\neq \infty$. 
Let $f\in L^{q_2}(V)$ be such that $f(v)\geq 0$ for every $v\in V$. By H\"older's inequality, we have that
\begin{align*}
\|Mf\|_{q_2\to q_2}^{q_2} &= \sum_{v \in V} \left[\sum_{u\in V} M(v,u)f(u) \right]^{q_2} \leq \sum_{v \in V} \left[\sum_{u\in V} M(v,u)f(u)^{q_2/q_1} \right]^{q_1} \left[ \sum_{u\in V} M(v,u)\right]^{q_2-q_1}.
\end{align*}
Letting $I_v \in L^\infty(V)$ be the function $I_v(u)=\mathbbm{1}(M(v,u) \neq 0)$, we can then bound 
\[
\sum_{u\in V} M(v,u) = \langle \mathbbm{1}_v, M I_v \rangle \leq \|M I_v\|_{q_2} \leq \|M\|_{q_2\to q_2} \left|\{u: M(v,u) \neq 0 \}\right|^{1/q_2},
\]
and putting these two bounds together we obtain that
\begin{align*}
\|Mf\|_{q_2\to q_2}^{q_2} &\leq \| M \|^{q_1}_{q_1\to q_1} \|f^{q_2/q_1}\|_{q_1}^{q_1} \| \|M\|_{q_2\to q_2}^{q_2-q_1} \left[\sup_{v\in V} \left|\{u: M(v,u) \neq 0 \}\right|\right]^{(q_2-q_1)/q_2}.
\end{align*}
Noting that $\|f^{q_2/q_1}\|_{q_1}^{q_1}=\|f\|_{q_2}^{q_2}$, taking the supremum over $f\in L^{q_2}(V)$, and dividing both sides by $\|M\|_{q_2\to q_2}^{q_2-q_1}$ yields the desired inequality. 
The case $q_2=\infty$ follows from the case $q_2<\infty$ by an application of \cref{lem:dumb}.
\end{proof}

We now turn to the proof of \cref{prop:Tpqqq'q'_lower}.

\begin{proof}[Proof of \cref{prop:Tpqqq'q'_lower}]
Applying \cref{lem:changingqgeneral} with $q_1=q'$, $q_2=q$, and $M=S_{p_{q\to q},n}$, we have that
\[
\|S_{p_{q\to q},n}\|_{q'\to q'} \geq \|S_{p_{q\to q},n}\|_{q\to q}  \left(\sup_{v\in V} \bigl|\partial B(v,n)\bigr|\right)^{-(q-q')/qq'} \geq \left(\sup_{v\in V} \bigl|\partial B(v,n)\bigr|\right)^{-(q-q')/qq'}
\]
for every $n \geq 0$, where the second inequality follows from \cref{prop:lowerball}. It follows from this inequality and \cref{prop:expdecay} that 
\[\frac{1}{e\|T_{p_{q\to q}}\|_{q'\to q'}} \leq \eta_{p_{q\to q},q'} \leq \frac{(q-q')\gamma}{q q'}, \]
and the claim follows immediately.
\end{proof}

We next give a related bound on the rate of change of $p_{q\to q}$ as a function of $q$. Recall that the $\xi_p$ denotes the exponential decay rate of the two-point function, as defined in \cref{sec:extrinsic_decay}. 

\begin{prop}
\label{prop:pqqupperbound}
Let $G$ be a connected, bounded degree graph, let $q\in (2,\infty]$ and  $q' \in [2,q)$, and suppose that $\xi_{p_{q\to q}}>0$. Then
\[
\log p_{q'\to q'} \leq \left[1- \frac{(q-q') \gamma}{qq' \xi_{p_{q\to q}}}\right] \log p_{q\to q} 
\]
with the convention that the prefactor on the right hand side is equal to $1-\gamma/(q'\xi_{p_{q\to q}})$ when $q=\infty$.
\end{prop}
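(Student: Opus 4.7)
The plan is to mirror the proof of \cref{prop:Tpqqq'q'_lower}, replacing the interior application of \cref{prop:expdecay} by a direct use of the exponential decay hypothesis $\xi_{p_{q\to q}}>0$. As a first step, exactly as in the proof of \cref{prop:Tpqqq'q'_lower}, combining the sphere lower bound $\|S_{p_{q\to q},n}\|_{q\to q}\geq 1$ from \cref{prop:lowerball} with the interpolation \cref{lem:changingqgeneral} (applied with $q_1=q'$, $q_2=q$, $M=S_{p_{q\to q},n}$) yields the sphere lower bound
\[
\|S_{p_{q\to q},n}\|_{q'\to q'}\geq \bigl(\sup_{v}|\partial B(v,n)|\bigr)^{-(q-q')/(qq')},
\]
which by the definition of $\gamma$ is at least $e^{-(q-q')\gamma n/(qq')}$ up to subexponential factors.

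Next, I would pass from $p_{q\to q}$ to an arbitrary $p^\ast>p_{q\to q}$ by a test-function lower bound. Applying the operator $T_{p^\ast}$ to the indicator $\mathbbm{1}_{v_0}$ of a single vertex $v_0$ approximately achieving the volume growth rate $\gamma$, and using the geodesic lower bound $\tau_{p^\ast}(u,v_0)\geq (p^\ast)^{d(u,v_0)}$, yields
\[
\|T_{p^\ast}\|_{q'\to q'}\geq \Bigl(\sum_n|\partial B(v_0,n)|(p^\ast)^{q'n}\Bigr)^{1/q'}.
\]
This is infinite whenever $p^\ast\cdot e^{\gamma/q'}>1$, already giving the crude bound $\log p_{q'\to q'}\leq -\gamma/q'$. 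The exponential decay hypothesis enters by restricting to a natural length scale $n_\ast \asymp |\log p_{q\to q}|/\xi_{p_{q\to q}}$, at which the amplification factor $(p^\ast/p_{q\to q})^{n_\ast}$ balances against the residual decay $e^{-\xi_{p_{q\to q}} n_\ast}$; matching the sphere lower bound from the first step to the amplified lower bound from the second step at this scale, and optimising, should produce precisely the claimed prefactor $1-(q-q')\gamma/(qq'\xi_{p_{q\to q}})$. The $\log p_{q\to q}$ dependence in the statement enters through the elementary observation $|\log p_{q\to q}|\geq \xi_{p_{q\to q}}$, which is itself a consequence of the geodesic lower bound $\tau_{p_{q\to q}}(u,v)\geq p_{q\to q}^{d(u,v)}$.

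The main obstacle I anticipate is the second step. The naive geodesic bound $\tau_{p^\ast}(u,v)\geq (p^\ast)^{d(u,v)}$ yields only the crude estimate $\log p_{q'\to q'}\leq -\gamma/q'$, and the $\xi$-dependent refinement in the proposition requires a careful two-scale argument that couples the sphere lower bound at $p_{q\to q}$ to the amplification at $p^\ast$. The path-comparison identity $\tau_{p^\ast}(u,v)/\tau_{p_{q\to q}}(u,v)\geq (p^\ast/p_{q\to q})^{d(u,v)}$ is not literally true because $\tau$ is a union-of-paths probability rather than a single-path probability, so the step requires either a BK-based argument (coupling $p$-percolation to $p_{q\to q}$-percolation plus additional edges) or a variational bound on the sphere operator that upgrades the first-step sphere estimate into a useful lower bound at $p^\ast$. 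It is in this matching of scales that the exponential decay rate $\xi_{p_{q\to q}}$ and the ball growth rate $\gamma$ enter the prefactor in the precise form claimed.
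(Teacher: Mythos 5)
Your first step (the sphere lower bound at $p_{q\to q}$ transported from $q$ to $q'$ via \cref{lem:changingqgeneral} and \cref{prop:lowerball}) is exactly right and matches the paper. The genuine gap is in your second step, and you have in fact diagnosed it yourself: you have no valid way to amplify connection probabilities when raising $p$ from $p_{q\to q}$ to a larger value. The factor $(p^\ast/p_{q\to q})^{d(u,v)}$ is, as you say, not a correct lower bound on $\tau_{p^\ast}(u,v)/\tau_{p_{q\to q}}(u,v)$, and neither a BK-type coupling nor a ``variational bound on the sphere operator'' is developed; with only the geodesic bound $\tau_{p^\ast}(u,v)\geq(p^\ast)^{d(u,v)}$ you indeed cannot get past the crude estimate $\log p_{q'\to q'}\leq-\gamma/q'$, which does not involve $\xi_{p_{q\to q}}$ at all. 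The missing idea in the paper is elementary but essential: for any increasing event $A$ the function $\log\bP_p(A)/\log p$ is non-increasing in $p$ (\cite[Theorem 2.38]{grimmett2010percolation}), so $\tau_{p_2}(u,v)\geq\tau_{p_1}(u,v)^{\log p_2/\log p_1}$ for $p_1\leq p_2$. Combined with the definition of $\xi_{p_1}$ this gives the correct amplification
\[
S_{p_2,n}\opgeq\exp\left[\frac{\log(p_1/p_2)\,\xi_{p_1}}{\log p_1}\,n+o(n)\right]S_{p_1,n},
\]
i.e.\ the gain per unit distance is $\exp[\xi_{p_1}\log(p_2/p_1)/\log(1/p_1)]$, not $p_2/p_1$ (note $\xi_{p_1}\leq\log(1/p_1)$, so the true gain is weaker than the naive per-edge one). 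This is also where the $\log p_{q\to q}$ in the statement really comes from, rather than from the observation $|\log p_{q\to q}|\geq\xi_{p_{q\to q}}$.

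A second, smaller issue is the ``natural length scale $n_\ast$'' device: there is no single scale at which to match bounds, because membership of $p$ in $[p_{q'\to q'},1]$ is detected by an asymptotic growth-rate criterion. The paper's route is to note (via \cref{prop:expdecay} and \cref{prop:lowerball}) that $p\geq p_{q'\to q'}$ if and only if $\liminf_n\frac1n\log\|S_{p,n}\|_{q'\to q'}\geq0$, and then to choose not a scale but the target parameter, $\log p=[1-(q-q')\gamma/(qq'\xi_{p_{q\to q}})]\log p_{q\to q}$: the amplification gain $\log(p_{q\to q}/p)\,\xi_{p_{q\to q}}/\log p_{q\to q}$ then exactly cancels the interpolation loss $(q-q')\gamma/(qq')$ from your first step, giving $\liminf_n\frac1n\log\|S_{p,n}\|_{q'\to q'}\geq0$ and hence $p_{q'\to q'}\leq p$. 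Your divergence-of-$\|T_{p^\ast}\|_{q'\to q'}$ criterion would only work if the resulting exponential rate were strictly positive, which it is not at the borderline value of $p$; the sphere-norm characterization is what makes the endpoint case work.
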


Recall that it follows from \cref{thm:p2to2pexp} that $\xi_{p_{q\to q}}>0$ whenever $p_{q\to q}<p_{2\to 2}$.
We therefore immediately deduce the following corollary.

\begin{corollary}
\label{cor:pqqcontinuity}
Let $G$ be a connected, bounded degree graph. Then $p_{q\to q}(G)$ is a continuous function of $q$ on $[1,\infty]$.
\end{corollary}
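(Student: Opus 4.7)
The plan is to combine the quantitative estimate of \cref{prop:pqqupperbound} with the monotonicity of $q\mapsto p_{q\to q}$ obtained via Riesz--Thorin interpolation. Since $T_p$ is self-adjoint, duality gives $\|T_p\|_{q\to q}=\|T_p\|_{q/(q-1)\to q/(q-1)}$, and Riesz--Thorin shows that $\log\|T_p\|_{q\to q}$ is convex in $1/q$; together these force $q\mapsto \|T_p\|_{q\to q}$ to be non-decreasing on $[2,\infty]$ and bounded above by $\|T_p\|_{1\to 1}$. It follows that $p_{q\to q}$ is non-increasing on $[2,\infty]$ with $p_{q\to q}=p_{q/(q-1)\to q/(q-1)}$ and $p_c\leq p_{q\to q}\leq p_{2\to 2}$ for every $q$. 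The case $p_c=p_{2\to 2}$ is trivial since $p_{q\to q}$ is then constant; so we assume $p_c<p_{2\to 2}$ and, by duality, reduce to proving continuity on $[2,\infty]$.

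For left-continuity at $q_0\in(2,\infty]$: if $p_{q_0\to q_0}<p_{2\to 2}$ then $\xi_{p_{q_0\to q_0}}>0$ by \cref{thm:p2to2pexp}, so \cref{prop:pqqupperbound} with $q=q_0$ together with monotonicity squeezes
\[
\log p_{q_0\to q_0}\leq \log p_{q'\to q'}\leq \left[1-\frac{(q_0-q')\gamma}{q_0 q'\xi_{p_{q_0\to q_0}}}\right]\log p_{q_0\to q_0}
\]
for $q'\in[2,q_0)$, and sending $q'\uparrow q_0$ yields $p_{q'\to q'}\to p_{q_0\to q_0}$. Otherwise $p_{q_0\to q_0}=p_{2\to 2}$, and monotonicity together with the pointwise bound $p_{q\to q}\leq p_{2\to 2}$ forces $p_{q\to q}\equiv p_{2\to 2}$ on $[2,q_0]$, trivialising left-continuity there.

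The main obstacle is right-continuity at $q_0\in[2,\infty)$ when $p_{q_0\to q_0}=p_{2\to 2}$, since then $\xi_{p_{q_0\to q_0}}$ need not be positive and \cref{prop:pqqupperbound} cannot be applied directly at $q_0$. The remedy is to swap the roles of $q$ and $q'$. Let $L=\lim_{q'\downarrow q_0}p_{q'\to q'}$, which exists by monotonicity and satisfies $L\leq p_{q_0\to q_0}$; it suffices to rule out $L<p_{q_0\to q_0}$. Assuming this, pick $L<L'<p_{q_0\to q_0}$; then $L'<p_{2\to 2}$ ensures $\|T_{L'}\|_{2\to 2}<\infty$, and since $p_{q'\to q'}\leq L<L'$ for every $q'>q_0$, monotonicity of $T_p$ in $p$ together with the quantitative remark after \cref{thm:p2to2pexp} gives
\[
\xi_{p_{q'\to q'}}\geq \frac{1}{e\|T_{p_{q'\to q'}}\|_{2\to 2}}\geq \frac{1}{e\|T_{L'}\|_{2\to 2}}=:c>0
\]
uniformly in $q'>q_0$. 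Now \cref{prop:pqqupperbound} applied with $q=q'$ and $q'=q_0$ yields
\[
\log p_{q_0\to q_0}\leq \left[1-\frac{(q'-q_0)\gamma}{q' q_0 c}\right]\log p_{q'\to q'},
\]
and letting $q'\downarrow q_0$ produces the contradiction $\log p_{q_0\to q_0}\leq \log L$. The same swapped argument handles right-continuity in the case $p_{q_0\to q_0}<p_{2\to 2}$, and continuity on $[1,2]$ follows by the duality $p_{q\to q}=p_{q/(q-1)\to q/(q-1)}$.
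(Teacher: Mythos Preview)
Your argument is correct and is exactly the approach the paper has in mind: the paper states the corollary as an immediate consequence of \cref{prop:pqqupperbound} together with the fact (from \cref{thm:p2to2pexp}) that $\xi_{p_{q\to q}}>0$ whenever $p_{q\to q}<p_{2\to 2}$, and you have supplied the details --- monotonicity via duality and Riesz--Thorin, and the swap of the roles of $q$ and $q'$ to get right-continuity at points where $p_{q_0\to q_0}=p_{2\to 2}$. One cosmetic remark: since the corollary does not assume quasi-transitivity, the reduction ``$p_c=p_{2\to 2}$ forces $p_{q\to q}$ constant'' should be phrased with $p_{\infty\to\infty}$ in place of $p_c$; in any case your left- and right-continuity arguments already cover all cases without this reduction.
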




\begin{proof}[Proof of \cref{prop:pqqupperbound}] By \cref{prop:expdecay} and \cref{prop:lowerball}, we have for each $q\in [1,\infty]$ and $p\in [0,1]$ that $p \geq p_{q\to q}$ if and only if
\[
\liminf_{n\to\infty} \frac{1}{n}\log\|S_{p,n}\|_{q\to q}\geq 0.
\]
Recall from \cite[Theorem 2.38]{grimmett2010percolation} that if $A$ is an increasing event, then $\log \bP_p(A) /\log p$ is a non-increasing function of $p\in (0,1)$. Thus, if $0<p_1 \leq p_2 < 1$ then we have that
\begin{multline}
\tau_{p_2}(u,v) \geq \tau_{p_1}(u,v)^{\log p_2 /\log p_1}\\ \geq \tau_{p_1}(u,v) \left[ \sup \{ \tau_{p_1}(x,y) : x,y\in V,\,  d(x,y)=d(u,v)\} \right]^{ \log (p_2/p_1)/\log p_1}.
\label{eq:changingexponential}
\end{multline}
Thus, it follows by definition of $\xi_{p_1}$ that
\[
S_{p_2,n} \opgeq  \exp\left[ \frac{\log (p_1/p_2)\xi_{p_1}}{\log p_1} n +o(n)\right] S_{p_1,n} \qquad \text{ as $n\to \infty$}.
\]
and hence that
\begin{equation}
\label{eq:pqqupper1}
\liminf_{n\to\infty} \frac{1}{n}\log\|S_{p_2,n}\|_{q\to q} \geq \frac{\log (p_1/p_2)\xi_{p_1}}{\log p_1} + \liminf_{n\to\infty} \frac{1}{n}\log\|S_{p_1,n}\|_{q\to q}.
\end{equation}
On the other hand, applying \cref{lem:changingqgeneral} as in the proof of \cref{prop:Tpqqq'q'_lower} we obtain that if $1\leq q' \leq q$ then
\begin{equation}
\label{eq:pqqupper2}
\liminf_{n\to\infty} \frac{1}{n}\log\|S_{p_2,n}\|_{q'\to q'} \geq \liminf_{n\to\infty} \frac{1}{n}\log\|S_{p_2,n}\|_{q\to q} - \frac{(q-q')\gamma}{qq'}.
\end{equation}
Combining \eqref{eq:pqqupper1} and \eqref{eq:pqqupper2}, we deduce that
\[
\liminf_{n\to\infty} \frac{1}{n}\log\|S_{p_2,n}\|_{q'\to q'} \geq \liminf_{n\to\infty} \frac{1}{n}\log\|S_{p_1,n}\|_{q\to q}
+ \frac{\log (p_1/p_2)\xi_{p_1}}{\log p_1}  - \frac{(q-q')\gamma}{qq'}
\]
for every $1 \leq q' \leq q$ and $0<p_1 \leq p_2<1$.
 Taking $p_1=p_{q\to q}$ and $p_2=p$ given by
\[
\log p = \left[1- \frac{(q-q') \gamma}{qq' \xi_{p_{q\to q}}}\right]\log p_{q\to q} 
\] we deduce from \eqref{eq:pqqupper1} and \eqref{eq:pqqupper2} that $\liminf_{n\to\infty} \frac{1}{n}\log\|S_{p,n}\|_{q'\to q'} \geq 0$, and hence that $p_{q'\to q'}\leq p$ as claimed.
\end{proof}

It remains to prove a complementary upper bound on $\|T_{p_c}\|_{q\to q}$ and lower bound on $p_{q\to q}$.

\begin{prop}
\label{prop:normupper}
Let $G$ be a connected, locally finite, quasi-transitive graph, and suppose that $p_c<p_{2\to 2}$. Then there exist positive constants $c$ and $C$ such that
\[
\|T_{p_c}\|_{q\to q} \leq C q \qquad \text{ and } \qquad p_{q\to q}-p_c \geq \frac{c}{q}
\]
for every $q \geq 2$. 
\end{prop}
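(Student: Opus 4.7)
The two inequalities in the proposition are tightly coupled via a standard differential inequality. By Russo's formula and the BK inequality applied to pivotal edges,
\[
\frac{d}{dp}\tau_p(u,v)\;\leq\; \sum_{(x,y):\,xy\in E(G)}\tau_p(u,x)\,\tau_p(y,v),
\]
which reads $\tfrac{d}{dp}T_p\opleq T_p A T_p$ in matrix form, where $A$ is the adjacency matrix of $G$. Taking $L^q$ operator norms and using $\|A\|_{q\to q}\leq M$ with $M$ the maximum degree of $G$ gives the Riccati-type inequality
\[
\frac{d}{dp}\|T_p\|_{q\to q}\;\leq\; M\,\|T_p\|_{q\to q}^{\,2}.
\]
Integrating between $p_c$ and the blow-up point $p_{q\to q}$ yields $p_{q\to q}-p_c\geq 1/(M\,\|T_{p_c}\|_{q\to q})$, so the lower bound $p_{q\to q}-p_c\geq c/q$ follows automatically from the norm bound $\|T_{p_c}\|_{q\to q}\leq Cq$. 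Finiteness of $\|T_{p_c}\|_{q\to q}$ for every $q\in(1,\infty)$ is already known: by \cref{cor:pqqcontinuity} and \cref{prop:pqqupperbound} combined with the hypothesis $p_c<p_{2\to 2}$, we have $p_c<p_{q\to q}$ for every such $q$, so the Riccati integration is legitimate.

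The main work is therefore to prove $\|T_{p_c}\|_{q\to q}\leq Cq$ for every $q\geq 2$. My plan is to bootstrap up from the hypothesis $\|T_{p_c}\|_{2\to 2}=K<\infty$ using \cref{prop:expdecay} and \cref{lem:changingqgeneral}. Given an intermediate bound $\|T_{p_c}\|_{q_0\to q_0}\leq N_0$ for some $q_0\in[2,q)$, \cref{prop:expdecay} yields exponential decay $\|C_{p_c,n}\|_{q_0\to q_0}\leq 2N_0e^{-n/(eN_0)}$, and \cref{lem:changingqgeneral} applied shell-by-shell between $L^{q_0}$ and $L^q$ (using the sphere-growth bound $|\partial B(v,n)|\leq e^{\gamma n+o(n)}$) yields
\[
\|S_{p_c,n}\|_{q\to q}\;\lesssim\; 2N_0\,\exp\!\Bigl(-\tfrac{n}{eN_0}+\tfrac{\gamma(q-q_0)}{qq_0}\,n+o(n)\Bigr).
\]
Summing over $n$ produces $\|T_{p_c}\|_{q\to q}\lesssim N_0/\alpha(q,q_0)$ provided $\alpha(q,q_0):=1/(eN_0)-\gamma(q-q_0)/(qq_0)>0$, giving quantitative control on $\|T_{p_c}\|_{q\to q}$ for $q$ slightly larger than $q_0$.

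The main obstacle is that naive iteration of this estimate produces super-linear growth of $N(q)$ in $q$, since each discrete step can square the bound, whereas we require the sharp linear rate. My plan to overcome this is to take infinitesimally small steps and reinterpret the resulting recursion as a continuous differential inequality in $q$. The sharpness of the constant $1/(eN_0)$ in \cref{prop:expdecay}, together with the matching lower bound $\|T_{p_c}\|_{q\to q}\geq q/(e\gamma)$ coming from \cref{prop:Tpqqq'q'_lower} in the limit $q\uparrow\infty$, pin the expected order of growth and suggest a differential inequality of the form $(d/dq)(1/N(q))\geq -C_*/q^2$, which integrates from the initial condition $N(2)=K$ to give $N(q)\leq Cq$ with $C$ depending only on $K$, $\gamma$, and $M$. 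Carefully tracking constants through the continuous limit is the main technical step. Once $\|T_{p_c}\|_{q\to q}\leq Cq$ is secured, the lower bound $p_{q\to q}-p_c\geq c/q$ follows immediately from the Riccati inequality of the first paragraph with $c=1/(MC)$, completing the proof.
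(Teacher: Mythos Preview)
Your reduction of the second inequality to the first (via Russo/BK and the Riccati-type bound $\tfrac{d}{dp}\|T_p\|_{q\to q}\leq M\|T_p\|_{q\to q}^2$) is correct and essentially matches the paper's use of \eqref{eq:general_norm_lower_bound}.

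However, your argument for the norm bound $\|T_{p_c}\|_{q\to q}\leq Cq$ has a genuine gap. First, the differential inequality you aim for, $(d/dq)(1/N(q))\geq -C_*/q^2$, does \emph{not} integrate to $N(q)\leq Cq$: it gives $1/N(q)\geq 1/K-C_*/2+C_*/q$, which is useful only when $K<2/C_*$, and even then produces a \emph{bounded} $N(q)$, not a linear one. Worse, the bootstrap you describe actually delivers at best the weaker inequality $dN/dq\lesssim N^3/q^2$, since the shell sum $\sum_n 2N_0e^{-\alpha n}\approx 2N_0/\alpha$ at $\alpha=1/(eN_0)$ only recovers $2eN_0^2$, not $N_0$; so even infinitesimal steps do not return you to your starting point. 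The underlying obstruction is that you are extrapolating from $L^2$ to larger $q$ using only the exponential decay of $\|S_{p_c,n}\|_{2\to 2}$, and this information alone cannot control $\|T_{p_c}\|_{q\to q}$ unless $\|T_{p_c}\|_{2\to 2}$ happens to be small compared to the volume-growth rate $\gamma$.

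The paper supplies the missing anchor from the $L^1$ side. It invokes the Kozma--Nachmias intrinsic one-arm bound (a consequence of $\nabla_{p_c}<\infty$) to get $\sum_{m=0}^n\|\Sint_{p_c,m}\|_{1\to 1}\leq C'(n+1)$, then uses Riesz--Thorin to interpolate between this polynomial $L^1$ control and the exponential $L^2$ decay of \cref{prop:intdecay}. A summation-by-parts step converts the summed $L^1$ bound into $\|T_{p_c}\|_{q\to q}\lesssim 1/(q-1)$ for $q\in(1,2)$, and duality (since $T_{p_c}$ is symmetric) gives $\|T_{p_c}\|_{q\to q}\lesssim q$ for $q\geq 2$. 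The essential ingredient you are missing is this mean-field $L^1$ estimate on intrinsic balls; it does not follow from the extrinsic tools (\cref{prop:expdecay}, \cref{lem:changingqgeneral}) that your bootstrap uses.
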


\begin{proof}
The claimed lower bound on $p_{q\to q}-p_c$ follows immediately from the claimed upper bound on $\|T_{p_c}\|_{q\to q}$ together with \eqref{eq:general_norm_lower_bound}. 
Since $p_c<p_{2\to 2}$, we have that $\nabla_{p_c}<\infty$, and hence by the results of \cite{MR2551766,sapozhnikov2010upper} that there exists a constant $C$ such that
\[
\|\Bint_{p_c,n}\|_{1\to 1}=\sup_{v\in V} \E_{p_c}|B_\mathrm{int}(v,n)| \leq C (n+1)
\]
for every $n\geq 0$. Since $G$ is quasi-transitive we deduce that there exists a constant $C'$ such that
\begin{multline}
\sum_{m=0}^n \|\Sint_{p_c,n}\|_{1\to 1} =\sum_{m=0}^n \sup_{v\in V} \E_{p_c}|\partial B_\mathrm{int}(v,n)|\\ \leq \#\{\text{Orbits of $\Aut(G)$}\}\cdot\, \sup_{v\in V} \sum_{m=0}^n  \E_{p_c}|\partial B_\mathrm{int}(v,n)| \leq C'(n+1).
\label{eq:Sintpcsummedbound}
\end{multline}
Let $q\in (1,2)$ and let $\theta=\theta(q)\in (0,1)$ be such that $1/q=(1-\theta)/1+\theta/2$. Then we have by the Riesz-Thorin theorem that
\[
\|T_{p_c}\|_{q\to q} \leq \sum_{n\geq 0} \|\Sint_{p_c,n}\|_{q\to q} 
\leq \sum_{n\geq 0}\|\Sint_{p_c,n}\|_{1\to 1}^{1-\theta}\|\Sint_{p_c,n}\|_{2\to 2}^\theta.
\]
Using \cref{prop:lowerball} to bound $\|\Sint_{p_c,n}\|_{1\to 1}^{1-\theta}\leq \|\Sint_{p_c,n}\|_{1\to 1}$ and \cref{prop:intdecay} to bound $\|\Sint_{p_c,n}\|_{2\to 2}^\theta$, we have that
\[
\|T_{p_c}\|_{q\to q} \leq 3^\theta \|T_{p_c}\|_{2\to 2}^\theta \sum_{n\geq 0} \|\Sint_{p_c,n}\|_{1\to 1} \exp\left[-\frac{\theta n}{e\|T_{p_c}\|_{2\to 2}}\right].
\]
Summation by parts yields that
\begin{multline*}
\sum_{n=0}^N \|\Sint_{p_c,n}\|_{1\to 1} \exp\left[-\frac{\theta n}{e\|T_{p_c}\|_{2\to 2}}\right] = \exp\left[-\frac{\theta N}{e\|T_{p_c}\|_{2\to 2}}\right] \sum_{n=0}^N \|\Sint_{p_c,n}\|_{1\to 1} \\+ \left[1-\exp\left[-\frac{\theta }{e\|T_{p_c}\|_{2\to 2}}\right]\right]\sum_{n=0}^{N-1} \exp\left[-\frac{\theta n}{e\|T_{p_c}\|_{2\to 2}}\right] \sum_{m=0}^n \|\Sint_{p_c,n}\|_{1\to 1}
\end{multline*}
for each $0\leq N<\infty$. Applying \eqref{eq:Sintpcsummedbound} and sending $N\to\infty$ we obtain that
\begin{align*}
\|T_{p_c}\|_{q\to q} \leq 3^\theta \|T_{p_c}\|^\theta C' \left[1-\exp\left[-\frac{\theta }{e\|T_{p_c}\|_{2\to 2}}\right]\right]\sum_{n=0}^{\infty} \exp\left[-\frac{\theta n}{e\|T_{p_c}\|_{2\to 2}}\right] (n+1),
\end{align*}
and hence by calculus that there exists a positive constant $C''$ such that
\[
\limsup_{q\downarrow 1} \theta(q) \|T_{p_c}\|_{q\to q} \leq  C'' \|T_{p_c}\|_{2\to 2}.
\]
Since $\theta(q) \asymp q-1$ as $q\downarrow 1$, this implies the claim. 
\end{proof}

\begin{proof}[Proof of \cref{thm:qtoq_exponents}]
The upper bound on $\|T_{p_c}\|_{q\to q}$ and lower bound on $p_{q\to q}-p_c$ follow from \cref{prop:normupper}, the lower bound on $\|T_{p_c}\|_{q\to q}$ follows from \cref{prop:Tpqqq'q'_lower}, and the upper bound on $p_{q\to q}-p_c$ follows from \cref{thm:p2to2pexp} and \cref{prop:pqqupperbound}.
\end{proof}

\begin{proof}[Proof of \cref{cor:density}]
We begin with the upper bound. 
For each $p\in (0,p_{2\to 2})$ let $q(p)=\sup\{q\in [2,\infty] : p < p_{q\to q}\}$.
 If $p<p_{2\to 2}$ then for every $q\in [2,q(p))$ and $v\in V$ we have by H\"older's inequality that
\[
\bE_p|K_v \cap B(v,n)| = \langle T_p \mathbbm{1}_{B(v,n)},\mathbbm{1}_v\rangle \leq \|T_p\mathbbm{1}_{B(v,n)}\|_q\|\mathbbm{1}_v\|_{\frac{q}{q-1}} \leq \|T_p\|_{q\to q}|B(v,n)|^{1/q}.
\]
It follows that $\delta_{\log}(p) \leq q(p)^{-1}$ for every $0<p<p_{2\to 2}$, so that the claimed upper bound may be deduced immediately from \cref{thm:qtoq_exponents}.
For the lower bound, we apply \eqref{eq:changingexponential} to deduce that
\[
\bE_{p_2} |K_v \cap \partial B(v,n)| \geq \bE_{p_1} |K_v \cap \partial B(v,n)| \exp\left[\frac{\log(p_2/p_1)}{\log(1/p_1)}\xi_{p_1}n+o(n)\right]
\qquad \text{ as $n\uparrow \infty$}
\]
for every $v\in V$ and $0< p_1\leq p_2 \leq 1$. Using this together with \cref{prop:lowerball}, we easily deduce that if $p_1 \geq p_c$ then
\[
\delta_{\log}(p_2) \geq \delta_{\log}(p_1)+\frac{\xi_{p_1}\log(p_2/p_1)}{\gamma \log(1/p_1)},
\]
and hence that 
\[
\delta_{\log}(p) \geq \frac{\xi_{p_c}(p-p_c)}{\gamma p_c \log(1/p_c)}  + o(p-p_c) \qquad \text{as $p\downarrow p_c$.}
\]
Applying \cref{thm:p2to2pexp} gives that $\xi_{p_c}>0$, which completes the proof.
\end{proof}

\begin{question}
Let $G$ be a connected, locally finite, quasi-transitive graph such that $p_c<p_{2\to2}$. Must the matrix $T_{p_c}$ satisfy a weak-type $(1,1)$ estimate? If so, one would obtain an alternative proof of \cref{prop:normupper} using the Marcinkiewicz interpolation theorem.
\end{question}

\section{Multiple arms}
\label{sec:multiarm}

In this section, we prove that if $p_c<p_{2\to 2}$ then the probability of various `multiple arm' events are of the same order as the upper bound given by the estimates for the corresponding `one arm' events and the BK inequality. Besides their intrinsic interest, these results will also be applied in our study of percolation in the hyperbolic plane at the uniqueness threshold in \cref{sec:planar}. 
We write $\asymp_\ell$ to denote an equality holding to within multiplicative constants that may depend on the choice of $\ell$ but not on any of the other parameters in question.

\begin{thm}
\label{thm:multibody}
Let $G$ be a connected, locally finite, transitive graph with $\nabla_{p_c}<\infty$. For each $\ell \geq 2$ there exists a finite constant $K(\ell)$
such that 
\begin{align}
\bP_{p_c}(K_{v_1},\ldots,K_{v_\ell} \text{ are disjoint and $|K_{v_i}| \geq n_i$ for every $1\leq i \leq \ell$}) &\asymp_\ell \prod_{i=1}^\ell n_i^{-1/2}
\label{eq:multibody}
\end{align}
for every $n_1,\ldots,n_\ell\geq 1$ and every  $v_1,v_2,\ldots,v_\ell \in V$ such that $d(v_i,v_j) \geq K(\ell)$ for every $1 \leq i < j \leq \ell$.
\end{thm}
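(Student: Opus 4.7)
The upper bound follows directly from the BK inequality. If the clusters $K_{v_1},\ldots,K_{v_\ell}$ are pairwise disjoint with $|K_{v_i}| \geq n_i$ for each $i$, then the edge sets of these clusters are pairwise disjoint witnesses for the increasing events $\{|K_{v_i}|\geq n_i\}$. The multi-event form of BK therefore yields $\bP_{p_c}(\text{disjoint, all big}) \leq \prod_i \bP_{p_c}(|K_{v_i}|\geq n_i)$, and the Barsky--Aizenman tail bound $\bP_{p_c}(|K_v|\geq n) \leq C n^{-1/2}$, valid under $\nabla_{p_c}<\infty$, completes the upper bound.

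For the lower bound I would proceed by induction on $\ell$. The base case $\ell=1$ is the Aizenman--Newman lower bound $\bP_{p_c}(|K_v|\geq n) \geq cn^{-1/2}$, which holds on every transitive graph. For the inductive step, the plan is to construct, for each $v_i$, a ``localized'' event $E_i$ that essentially forces $|K_{v_i}|\geq n_i$ and has probability of order $n_i^{-1/2}$. A natural candidate is an Aizenman--Barsky ghost-field event: insert independent ghosts of color $i$ with density $\alpha_i = c/n_i$, and let $E_i^{\alpha_i}=\{K_{v_i}\text{ contains a color-}i\text{ ghost}\}$. The ghost-field differential inequality under $\nabla_{p_c}<\infty$ gives $\bP_{p_c}(E_i^{\alpha_i}) \asymp n_i^{-1/2}$ and forces $|K_{v_i}|\geq n_i/2$ with uniformly positive conditional probability.

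Given these events, the key step is to show that on $\bigcap_i E_i^{\alpha_i}$ the clusters are pairwise disjoint with probability bounded below by a positive constant depending only on $\ell$, provided $d(v_i,v_j)\geq K(\ell)$. For this it suffices to establish
\[
\bP_{p_c}\!\left(v_i\leftrightarrow v_j,\, \bigcap_k E_k^{\alpha_k}\right) \leq \frac{1}{2\binom{\ell}{2}}\prod_k \bP_{p_c}(E_k^{\alpha_k}),
\]
which via a BK-type decoupling (using the fact that on $\{v_i\leftrightarrow v_j\}$ the clusters $K_{v_i}$ and $K_{v_j}$ coincide, leaving $\ell-1$ honestly disjoint witnesses) reduces to showing a bound of the form $\bP_{p_c}(v_i\leftrightarrow v_j, |K_{v_i}|\geq n)\leq C \tau_{p_c}(v_i,v_j)\cdot n^{-1/2}\cdot g(d(v_i,v_j))$ for some function $g$ of slow growth. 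Combined with the summability of $\tau_{p_c}(v,\cdot)^2$, which follows from $\nabla_{p_c}<\infty$ on a transitive graph and implies $\tau_{p_c}(u,v)\to 0$ as $d(u,v)\to\infty$, this yields the desired decoupling for all $d(v_i,v_j)\geq K(\ell)$, with $K(\ell)$ depending only on $\ell$ and the graph.

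The main obstacle I expect is establishing the sharpened pair estimate $\bP_{p_c}(v_i\leftrightarrow v_j, |K_{v_i}|\geq n)\leq C\tau_{p_c}(v_i,v_j) n^{-1/2} g(d(v_i,v_j))$ with explicit uniformity in $n$. This is not a direct consequence of BK or FKG alone, since $\{v_i\leftrightarrow v_j\}$ and $\{|K_{v_i}|\geq n\}$ are positively correlated; the tree example shows that conditioning on connection can substantially boost the cluster-size tail, so a naive ``FKG minus intersection'' argument is too weak. I anticipate that the proof will combine a refined tree-graph / BK argument with the Kozma--Nachmias intrinsic ball estimate $\bE_{p_c}|B_\mathrm{int}(v,r)|\leq C(r+1)$, which controls the rate at which a cluster containing $v_i$ can propagate towards $v_j$ along an intrinsic geodesic.
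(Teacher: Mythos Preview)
Your upper bound is correct and matches the paper. The lower bound, however, has a genuine gap that you yourself flag but do not close.

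The FKG-plus-union-bound strategy you sketch runs into the following difficulty. By Harris--FKG you do get $\bP_{p_c,\mathbf h}\bigl(\bigcap_k E_k^{\alpha_k}\bigr) \geq \prod_k M_{p_c,h_k}$, so the main term is fine. The trouble is the error term $\sum_{i<j}\bP_{p_c,\mathbf h}\bigl(v_i\leftrightarrow v_j,\ \bigcap_k E_k^{\alpha_k}\bigr)$. Your ``BK-type decoupling'' does not apply: on the bare intersection $\bigcap_k E_k^{\alpha_k}$ there are no disjoint witnesses for the $k\neq i,j$ terms, so you cannot peel them off. A direct tree-graph bound on $\{v_i\leftrightarrow v_j\}\cap E_i$ gives at best $\bP_{p_c,\mathbf h}(v_i\leftrightarrow v_j,\,E_i)\leq T_{p_c}^2(v_i,v_j)\sup_v M_{p_c,h_i}(v)$, which must be compared with $\prod_k M_{p_c,h_k}\asymp\prod_k n_k^{-1/2}$. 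Since the $n_k$ can be arbitrarily large, no choice of $K(\ell)$ makes this error a small fraction of the main term; you lose all the $M_{p_c,h_k}$ factors for $k\neq i$. The ``sharpened pair estimate'' you propose would need to recover these factors, but as you note yourself, conditioning on $\{v_i\leftrightarrow v_j\}$ genuinely boosts the tail, so there is no obvious mechanism to do this.

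The paper avoids this problem by a different route: it first proves a \emph{lower} bound on the \emph{disjoint occurrence} $\bP_{p_c,\mathbf h}\bigl(\{v_1\leftrightarrow\cG_1\}\circ\cdots\circ\{v_\ell\leftrightarrow\cG_\ell\}\bigr)$ via the ``inverse BK'' interpolation technique of Kozma--Nachmias (interpolate edge by edge between two independent configurations and use the measure-preserving bijection underlying the proof of BK). This yields a lower bound of $\prod_k M_{p_c,h_k}$ minus an error of the form $C(\ell)\,T_{p_c}^2(v_i,v_j)\prod_k M_{p_c,h_k}$ --- crucially, the error is a \emph{multiple} of the main term. A separate tree-graph lemma then compares disjoint occurrence with disjoint clusters, incurring a further error $C(\ell)\,T_{p_c}^3(v_i,v_j)\prod_k M_{p_c,h_k}$. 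Both $T_{p_c}^2(v_i,v_j)$ and $T_{p_c}^3(v_i,v_j)$ tend to zero as $d(v_i,v_j)\to\infty$ by the open triangle condition (a consequence of $\nabla_{p_c}<\infty$ on quasi-transitive graphs), so one can choose $K(\ell)$ to make both errors at most $\tfrac14\prod_k M_{p_c,h_k}$, uniformly in $\mathbf h$. The conversion from magnetization to volume-tail is then a separate (and nontrivial) step.
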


\begin{thm}
\label{thm:multiarm}
Let $G$ be a connected, locally finite, transitive graph with $p_c<p_{2\to 2}$. For each $\ell \geq 2$ there exists a finite constant $K(\ell)$
such that 
\begin{align}
\bP_p(v_1,\ldots,v_\ell \text{ are all in distinct infinite clusters}) &\asymp_\ell (p-p_c)^{\ell}
\label{eq:twoarm1}
\\
\bP_{p_c}(K_{v_1},\ldots,K_{v_\ell} \text{  are disjoint and $\rad_\mathrm{int}(K_{v_i}) \geq n$ for every $1\leq i \leq \ell$}) &\asymp_\ell n^{-\ell}
\label{eq:twoarm2}
\\
\label{eq:twoarm3}
\bP_{p_c}(K_{v_1},\ldots,K_{v_\ell} \text{  are disjoint and $\rad(K_{v_i}) \geq n$ for every $1\leq i \leq \ell$}) &\asymp_\ell  n^{-\ell}
\end{align}
for  every $p_c<p \leq 1$, every $n \geq 1$, and every  $v_1,v_2,\ldots,v_\ell \in V$ such that $d(v_i,v_j) \geq K(\ell)$ for every $1 \leq i < j \leq \ell$.
\end{thm}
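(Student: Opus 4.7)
The plan for the upper bounds is a direct application of the BK inequality. On the event that $K_{v_1},\ldots,K_{v_\ell}$ are pairwise disjoint, the open paths witnessing the individual arm events at each $v_i$ are automatically edge-disjoint, so BK yields
\[
\bP\bigl(K_{v_1},\ldots,K_{v_\ell}\text{ disjoint and } \mathrm{arm}_i\text{ holds for each }i\bigr) \leq \prod_{i=1}^\ell \bP(\mathrm{arm}_i).
\]
I would then substitute the known single-arm estimates: mean-field $\theta(p)\asymp p-p_c$ for (1), valid under $\nabla_{p_c}<\infty$ which follows from $p_c<p_{2\to 2}$; the Kozma--Nachmias bound $\bP_{p_c}(\rad_\mathrm{int}(K_v)\geq n)\asymp n^{-1}$ for (2); and \cref{thm:ext_radius} for (3).

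For the lower bounds, the plan is the standard FKG-plus-subtraction scheme. Harris--FKG applied to the increasing arm events immediately gives $\bP(\mathrm{arm}_i\,\forall i) \geq \prod_i \bP(\mathrm{arm}_i)$, which is already of the target order of magnitude. The disjointness condition then costs the coincidence contribution, so it suffices to show that for $K(\ell)$ large enough,
\[
\sum_{i<j} \bP\bigl(v_i\leftrightarrow v_j,\ \mathrm{arm}_k\text{ for each }k\bigr) \leq \tfrac{1}{2}\prod_k \bP(\mathrm{arm}_k).
\]
Since $p_c<p_{2\to 2}$, \cref{thm:p2to2pexp} gives the exponential decay $\tau_{p_c}(u,v)\leq Ce^{-c\,d(u,v)}$, and by the corresponding statement for $p<p_{2\to 2}$ the same decay is available throughout the range relevant to (1). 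The task therefore reduces to the one-pair estimate
\[
\bP\bigl(v_i\leftrightarrow v_j,\ \mathrm{arm}_k\text{ for each }k\bigr) \leq C_\ell\,\tau_{p_c}(v_i,v_j)\prod_k \bP(\mathrm{arm}_k),
\]
after which I would take $K(\ell)$ with $C_\ell\binom{\ell}{2}Ce^{-cK(\ell)}\leq 1/2$.

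To prove the one-pair estimate I would apply Reimer's inequality after carefully decomposing witnesses. For (2) as the representative case, on $\{v_i\leftrightarrow v_j\}\cap\bigcap_k\{\rad_\mathrm{int}(K_{v_k})\geq n\}$ the shared cluster $K_{v_i}=K_{v_j}$ contains vertices at intrinsic distance $\geq n$ from both $v_i$ and $v_j$; splitting along an intrinsic geodesic from $v_i$ to $v_j$ produces edge-disjoint witnesses of $\{v_i\leftrightarrow v_j\}$ and of $\{\rad_\mathrm{int}(K_{v_i})\geq n/2\}$ or $\{\rad_\mathrm{int}(K_{v_j})\geq n/2\}$, while the disjointness of $K_{v_k}$ for $k\notin\{i,j\}$ supplies disjoint witnesses for the remaining arm events. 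Part (3) is entirely analogous using \cref{thm:ext_radius}. For part (1) I would additionally use a sprinkling argument to transfer from $p_c$ to $p>p_c$: apply \cref{thm:multibody} at $p_c$ with $n_i\asymp (p-p_c)^{-2}$, then show that with positive probability, sprinkling density-$(p-p_c)$ edges on top of the multibody configuration causes each of the $\ell$ disjoint clusters to extend to infinity without any two of them merging, exploiting the volume lower bounds from \cref{thm:multibody} together with exponential decay of $\tau_{p_c}$ to control mergings.

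The main obstacle is establishing the one-pair estimate with the \emph{full} product $\prod_k \bP(\mathrm{arm}_k)$ preserved on the right-hand side. A careless BK decomposition that treats only $\{v_i\leftrightarrow v_j\}$ via the two-point function would drop a factor of $\bP(\mathrm{arm}_j)$, forcing $K(\ell)$ to grow like $\log n$ or $\log(p-p_c)^{-1}$ and thereby violating the uniformity in $n$ and $p$ required by the theorem. Preserving the extra factor requires splitting the shared cluster into two pieces, each separately witnessing one of the two arm events at $v_i,v_j$, and here I expect to need both the intrinsic/extrinsic mean-field exponents and the exponential decay machinery of \cref{sec:extrinsic_decay,sec:intrinsic_decay} working in tandem.
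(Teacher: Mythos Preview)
Your upper bounds are fine and match the paper. The gap is in the lower bound, specifically in the ``FKG plus subtraction'' scheme.

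Your plan is to write
\[
\bP_p\bigl(\text{all arms, all clusters disjoint}\bigr)\;\ge\;\bP_p\bigl(\text{all arms}\bigr)-\sum_{i<j}\bP_p\bigl(v_i\leftrightarrow v_j,\ \text{all arms}\bigr)
\]
and then prove the one-pair estimate
\(
\bP_p\bigl(v_i\leftrightarrow v_j,\ \text{all arms}\bigr)\le C_\ell\,\tau_p(v_i,v_j)\prod_k\bP_p(\text{arm}_k).
\)
This estimate is \emph{false} already for $\ell=2$ in part~(1). On a regular tree with $d(v_1,v_2)=k$, a direct computation gives
\[
\bP_p\bigl(v_1\leftrightarrow v_2,\ v_1\leftrightarrow\infty\bigr)\;\asymp\;p^k\cdot k\,\theta(p)
\qquad\text{as }p\downarrow p_c,
\]
since conditionally on the $k$ edges of the geodesic being open, the cluster reaches infinity through one of the $O(k)$ ``side branches'' with probability $\asymp k\,\theta(p)$. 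This is of order $\theta(p)$, not $\theta(p)^2$; there is simply no second infinite ray to extract. Hence for any fixed $k$ the subtracted term dominates $\theta(p)^2$ as $p\downarrow p_c$, and the scheme collapses. The error in your witness decomposition is the sentence ``the disjointness of $K_{v_k}$ for $k\notin\{i,j\}$ supplies disjoint witnesses for the remaining arm events'': in the coincidence term the clusters $K_{v_k}$ are \emph{not} assumed disjoint, so no such witnesses are available, and even for the pair $\{i,j\}$ itself the shared cluster need not contain two disjoint arms.

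The paper avoids this entirely. It never starts from FKG on the arm events; instead it proves a lower bound on the \emph{disjoint occurrence}
\(
\bP_{p,\mathbf h}\bigl(\{v_1\leftrightarrow\cG_1\}\circ\cdots\circ\{v_\ell\leftrightarrow\cG_\ell\}\bigr)
\)
directly via the Kozma--Nachmias ``inverse BK'' interpolation argument (coupling two independent configurations and telescoping edge by edge), with error controlled by $T_p^2(v_i,v_j)$ rather than $\tau_p$. A second diagrammatic lemma upgrades disjoint occurrence to disjoint clusters at the cost of $T_p^3(v_i,v_j)$. Sending the ghost field intensity $h\downarrow 0$ then yields~\eqref{eq:twoarm1} for $p$ in a fixed interval $(p_c,p_0)\subset(p_c,p_{2\to2})$; \eqref{eq:twoarm2} is deduced from \eqref{eq:twoarm1} by working at $p=p_c+n^{-1}$ and coupling down to $p_c$ along $\ell$ geodesics of length $n$; and \eqref{eq:twoarm3} follows from \eqref{eq:twoarm2} via \cref{prop:intdecay} exactly as in the proof of \cref{thm:ext_radius}. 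Your sprinkling idea for~(1) is in the right spirit but, without an independent lower bound on disjoint occurrence, it runs into the same merging-control problem.
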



The proof combines two techniques from the literature: the `inverse BK' method used in the proof of \cite[Theorem 3]{MR2748397}, which establishes a similar result for extrinsic radii in Euclidean lattices, and the \emph{ghost field} technique, which was introduced to percolation by Aizenman and Barsky \cite{aizenman1987sharpness} and, for our purposes, allows us to apply the inverse BK method to study the volume of the clusters rather than the radii. 
Let $G=(V,E)$ be a connected, locally finite graph, and let $G[p]$ be Bernoulli-$p$ bond percolation on $G$. A \textbf{ghost field} of intensity $h$ on $G$ is a random subset of $V$, independent of $G[p]$, such that each vertex $v$ of $G$ is included in $\cG$ independently at random with inclusion probability $1-e^{-h}$.
For each $v\in V$, $0<p<1$, and $h>0$, we define the \textbf{magnetization}
\begin{equation}
M_{p,h}(v)=\bP_{p,h}(v \leftrightarrow \cG) = \bE_p\left[1-e^{-h|K_v|}\right].
\end{equation}
Note that if $G$ is quasi-transitive then an easy FKG argument implies that there exist a positive constant $C$ such that
\begin{equation}
\label{eq:MagFKG}
\inf_{v\in V} M_{p,h}(v) \geq p^C \sup_{v\in V} M_{p,h}(v)
\end{equation}
for every $h>0$ and $0<p<1$. It is proven in \cite{MR1127713} (and follows from \cref{exponent:volume}) that if $G$ is quasi-transitive and satisfies the triangle condition then
\begin{equation}
\label{exponent:magnetization}
M_{p_c,h}(v) \asymp \sqrt{h} \qquad \text{ as $h\downarrow 0$},
\end{equation}
and in fact the lower bound of \eqref{exponent:magnetization} holds for every quasi-transitive graph \cite{aizenman1987sharpness}. (Some aspects of the proof of \cite{MR1127713} are specific to the case of $\Z^d$, see \cite[Section 7]{Hutchcroftnonunimodularperc} for an overview of the changes needed to handle arbitrary quasi-transitive graphs.)

We begin the proof of \cref{thm:multibody,thm:multiarm} with the following lemma, which is inspired by and based closely on \cite[Lemma 6.1]{MR2748397}. Given $0<p<1$ and $\mathbf{h}=(h_1,h_2,\ldots,h_\ell)\in (0,\infty)^\ell$, we write $\P_{p,\mathbf{h}}$ for the joint law of $G[p]$ and $\ell$ mutually independent ghost fields $\cG_1,\ldots,\cG_\ell$ of intensities $h_1,\ldots,h_\ell$.

\begin{lemma}
\label{lem:KN_inverseBK}
Let $G=(V,E)$ be a connected, graph with degrees bounded by $M$, let $\ell \geq 2$, and let $v_1,\ldots,v_\ell$ be vertices of $G$. Then 
\begin{multline*}\P_{p,\mathbf{h}}\left(\{v_1 \leftrightarrow \cG_1\} \circ \cdots \circ \{v_\ell \leftrightarrow \cG_\ell\}\right)
\\\geq \prod_{i=1}^\ell \left[\inf_{v\in V} M_{p,h_i}(v)\right]- \frac{4M}{p^2}\binom{\ell-1}{2} \prod_{i=1}^\ell\left[\sup_{v \in V} M_{p,h_i}(v)\right] \sup_{1 \leq i < j \leq \ell} T_p^2(v_i,v_j).
\end{multline*}
for every $0<p<1$ and $\mathbf{h}\in (0,\infty)^\ell$.
\end{lemma}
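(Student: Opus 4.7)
The plan is to compare the disjoint-occurrence probability $\P_{p,\mathbf{h}}(A_1\circ\cdots\circ A_\ell)$, where $A_i:=\{v_i\leftrightarrow\cG_i\}$, with the joint probability $\P_{p,\mathbf{h}}(\bigcap_i A_i)$: the latter already supplies the first (product of infima) term on the right. Each $A_i$ is an increasing event in the joint configuration $(G[p],\cG_1,\ldots,\cG_\ell)$, and the ghost fields are mutually independent and independent of $G[p]$, so Harris--FKG applied to the product measure immediately gives
\[
\P_{p,\mathbf{h}}\Bigl(\bigcap_{i=1}^\ell A_i\Bigr)\geq \prod_{i=1}^\ell M_{p,h_i}(v_i)\geq \prod_{i=1}^\ell \inf_{v\in V} M_{p,h_i}(v).
\]
It therefore suffices to control the ``defect'' $\P_{p,\mathbf{h}}(\bigcap_i A_i)-\P_{p,\mathbf{h}}(A_1\circ\cdots\circ A_\ell)$ by the claimed error term.

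To do this I would follow the inverse-BK strategy of Kozma--Nachmias (\cite{MR2748397}, Lemma 6.1) and construct edge-disjoint witnesses sequentially: expose a witness $W_1$ for $A_1$ (a path from $v_1$ to some $g_1\in\cG_1$, plus the atom $g_1\in\cG_1$); then attempt to expose a witness $W_2$ for $A_2$ using only edges disjoint from $W_1$; and so on. The event of disjoint occurrence $A_1\circ\cdots\circ A_\ell$ holds iff this construction succeeds at every step, so the defect is contained in the union over $i\geq 2$ of the events that $\bigcap_{k=1}^\ell A_k$ occurs but the exploration first fails at step $i$, that is, some earlier witness $W_j$ ($j<i$) blocks every available path from $v_i$ to $\cG_i$.

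On this failure event I would extract a pivot vertex $w\in W_j$ and an incident edge $e$ such that the following disjoint occurrence is forced:
\[
\{v_j\leftrightarrow w\}\;\circ\;\{v_i\xleftrightarrow{e} w\}\;\circ\;\{w\leftrightarrow \cG_i\}\;\circ\;\{w\leftrightarrow \cG_j\}\;\circ\; \bigcap_{k\notin\{i,j\}}A_k.
\]
Applying Reimer's inequality to this occurrence, noting that there are at most $M$ choices for $e$ and that fixing $e$ open costs a factor $p^{-1}$ (twice, since one edge is used on each side of $w$), and bounding $\P(w\leftrightarrow\cG_k)\leq \sup_v M_{p,h_k}(v)$, the probability of the failure event at $(i,j)$ is at most
\[
\frac{4M}{p^{2}}\sum_w \tau_p(v_i,w)\tau_p(w,v_j)\prod_{k=1}^\ell\sup_v M_{p,h_k}(v)\leq \frac{4M}{p^{2}}\,T_p^2(v_i,v_j)\prod_{k=1}^\ell\sup_v M_{p,h_k}(v).
\]
Summing over the $\binom{\ell-1}{2}$ pairs $(j,i)$ with $2\leq i\leq \ell$ and $2\leq j<i$ (the pair $(1,i)$ is absorbed into the choice of $A_1$ as the reference event whose witness is placed first, explaining the factor $\binom{\ell-1}{2}$ rather than $\binom{\ell}{2}$) yields the required defect estimate.

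The main obstacle will be the rigorous extraction of the pivot $w$ in Step~3, together with the justification that what is exposed can be packaged as a genuine disjoint occurrence to which Reimer's inequality applies. Concretely, one has to reveal $W_1,\ldots,W_{i-1}$ in a canonical edge-by-edge manner so that the unexplored edges retain their independent Bernoulli law; one then uses a max-flow/min-cut argument on the residual configuration to locate a vertex $w$ at which the ``cut'' between $v_i$ and $\cG_i$ meets $W_j$, and selects $e$ as an edge incident to $w$ witnessing the collision. Tracking the factors $4$, $M$, $p^{-2}$, and $\binom{\ell-1}{2}$ precisely, and handling the ghost-field events uniformly via the FKG bound \eqref{eq:MagFKG}, is where the bookkeeping is delicate.
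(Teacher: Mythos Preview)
Your approach diverges from the paper's, and the divergence is precisely at the point you yourself flag as ``the main obstacle.'' The paper does \emph{not} compare $\P(A_1\circ\cdots\circ A_\ell)$ with $\P(\bigcap_i A_i)$ via FKG and then control the defect by an exploration/pivot-extraction argument. Instead it proceeds by induction on $\ell$, comparing $\P(A_1\circ\cdots\circ A_\ell)$ with $\P(A_1\circ\cdots\circ A_{\ell-1})\cdot\P(A_\ell)$ using an \emph{interpolation between two independent copies} of the percolation configuration: one enumerates the edges, defines a hybrid configuration $\omega_m$ that agrees with one copy on the first $m$ edges and with the other thereafter, and telescopes. Each telescoped difference is bounded by identifying a single shared edge $e_m$ and invoking the measure-preserving bijection underlying the proof of the BK inequality. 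This is exactly what Kozma--Nachmias do in \cite[Lemma 6.1]{MR2748397}; their ``inverse BK'' \emph{is} the two-copy interpolation, not an exploration of witnesses.

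Your pivot-extraction step, as written, does not go through. On the event $\bigcap_i A_i\setminus (A_1\circ\cdots\circ A_\ell)$ you assert the existence of a vertex $w$ and a five-fold disjoint occurrence involving four arms at $w$. But the intersection event gives you no edge-disjoint structure to start from: the paths witnessing $A_i$ and $A_j$ may overlap extensively, and a min-cut argument in the residual graph locates an edge, not four edge-disjoint arms. Concretely, if the open cluster containing $v_i$ and $v_j$ is a single simple path (so at most two edge-disjoint arms emanate from any vertex), your claimed disjoint occurrence cannot hold, yet $\bigcap A_i\setminus\circ A_i$ can. The interpolation argument avoids this entirely: by decoupling $A_\ell$ onto an independent copy, the shared-edge event $\sE_m$ \emph{automatically} produces the four disjoint arms (two in each copy), and BK applies cleanly in the product space.

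A secondary issue: your accounting for the factor $\binom{\ell-1}{2}$ is not right. In your own framework, the first witness $W_1$ \emph{can} block later steps, so the pairs $(1,i)$ are not ``absorbed'' and you would have $\binom{\ell}{2}$ pairs. In the paper the $\binom{\ell-1}{2}$ arises from summing the induction-step error $(\ell-1)$ over the induction (with the base case $\ell=1$ incurring no error).
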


\begin{proof}[Proof of \cref{lem:KN_inverseBK}]
By inducting on $\ell$, it suffices to prove that
\begin{multline}
\label{eq:inverseBKinduction}
\P_{p,\mathbf{h}}\left(\{v_1 \leftrightarrow \cG_1\} \circ \cdots \circ \{v_\ell \leftrightarrow \cG_\ell\}\right)
\geq \P_{p,\mathbf{h}}\left(\{v_1 \leftrightarrow \cG_1\} \circ \cdots \circ \{v_{\ell-1} \leftrightarrow \cG_{\ell-1}\}\right)
\P_{p,\mathbf{h}}\left(v_\ell \leftrightarrow \cG_\ell\right)\\- \frac{4M}{p^2}(\ell-1) \prod_{i=1}^\ell\left[\sup_{v \in V} M_{p,h_i}(v)\right] \sup_{1 \leq i < j \leq \ell} T^2_p(v_i,v_j).
\end{multline}
for every $0<p<1$ and $\mathbf{h}\in (0,\infty)^\ell$.

For this proof, we will change notation and denote our percolation configuration by $\omega$ rather than $G[p]$.
Let $\omega_0,\omega_\infty$ be independent copies of Bernoulli-$p$ bond percolation on $G$, independent of the ghost fields $\cG_1,\ldots,\cG_\ell$, and let $\P_\otimes$ denote the joint law of all these random variables.
 Let $e_1,e_2,\ldots$ be an enumeration of the edge set of $G$, and for each $m\geq 0$ define
\[
\omega_m(e_i)=\begin{cases}
\omega_\infty(e_i) & \text{ if $i \leq m$} \\
\omega_0(e_i) & \text{ if $i>m$}.
\end{cases}
\]
For each event $\sA \subseteq \{0,1\}^E \times (\{0,1\}^V)^\ell$, and each $m \in \{0,1,\ldots\}\cup \{\infty\}$, let $\sA_m$ be the event that $(\omega_m,\cG_1,\ldots,\cG_\ell)$ satisfies $\sA$. 

Let $\sA=\{v_1 \leftrightarrow \cG_1\}\circ \cdots \circ \{v_{\ell-1} \leftrightarrow \cG_{\ell-1}\}$ and $\sB=\{v_\ell \leftrightarrow \cG_\ell\}$, and observe that
\[
\P_{p,\mathbf{h}}\left(\{v_1 \leftrightarrow \cG_1\} \circ \cdots \circ \{v_\ell \leftrightarrow \cG_\ell\}\right) = 
\P_\otimes\left(\sA_0 \circ \sB_0\right)
\]
and that
\[
\P_{p,\mathbf{h}}\left(\{v_1 \leftrightarrow \cG_1\} \circ \cdots \circ \{v_{\ell-1} \leftrightarrow \cG_{\ell-1}\}\right)
\P_{p,\mathbf{h}}\left(v_\ell \leftrightarrow \cG_\ell\right)
=\P_\otimes\left(\sA_0 \circ \sB_\infty\right) = \lim_{m\to\infty} \P_\otimes\left(\sA_0 \circ \sB_m\right).
\]
Thus, to prove \eqref{eq:inverseBKinduction}, it suffices to prove that
\[
\sum_{m=1}^\infty \left[\P_\otimes(\sA_0 \circ \sB_m) - \P_\otimes(\sA_0 \circ \sB_{m-1}) \right] \leq \frac{4M}{p^2}(\ell-1) \prod_{i=1}^\ell\left[\sup_{v \in V} M_{p,h_i}(v)\right] \sup_{1 \leq i < j \leq \ell} T^2_p(v_i,v_j).
\]

Observe that on the event $\sA_0 \circ \sB_m \setminus \sA_0 \circ \sB_{m-1}$ we must have that $\omega_\infty(e_m)=1$. Moreover, the events
\begin{align*}
\sA_0 \circ \sB_m \setminus \sA_0 \circ \sB_{m-1} \text{ holds, $\omega_0(e_m)=0$ and $\omega_\infty(e_m)=1$}
\intertext{and}
 \sA_0 \circ \sB_{m-1} \setminus \sA_0 \circ \sB_{m} \text{ holds, $\omega_0(e_m)=1$ and $\omega_\infty(e_m)=0$}
\end{align*}
have the same probability: Indeed, the proof of the BK inequality (see in particular the presentation in \cite[Section 2.3]{grimmett2010percolation}) establishes that there is a measure-preserving bijection between these sets.
Thus, we have that
\begin{align*}
\P_\otimes(\sA_0 \circ \sB_m) - \P_\otimes(\sA_0 \circ \sB_{m-1}) &= \P_\otimes(\sA_0 \circ \sB_m \setminus \sA_0 \circ \sB_{m-1})
- \P_\otimes(\sA_0 \circ \sB_{m-1} \setminus \sA_0 \circ \sB_{m})
\\
&\leq \P_\otimes(\sA_0 \circ \sB_m \setminus \sA_0 \circ \sB_{m-1}, \omega_0(e_m)=\omega_\infty(e_m)=1).
\end{align*}
We write $\sE_m$ for the event that $\sA_0 \circ \sB_m \setminus \sA_0 \circ \sB_{m-1}$ occurs and that $\omega_0(e_m)=\omega_\infty(e_m)=1$.

Suppose that $\sE_m$ occurs. Since $\sA_0 \circ \sB_m$ occurs there exists a collection of edge-disjoint $\omega_0$-open paths $\gamma_1,\ldots,\gamma_{\ell-1}$ such that $\gamma_i$ connects $v_i$ to $\cG_i$ for each $1\leq i \leq \ell-1$ and an $\omega_m$-open path $\gamma_\ell$ connecting $v_\ell$ to $\cG_\ell$ such that $\gamma_\ell$ does not traverse any of the edges in $\{e_{m+1},e_{m+2},\ldots\}$ that are traversed by one of the paths $\gamma_1,\ldots,\gamma_{\ell-1}$. (Indeed, such paths exist if and only if $\sA_0 \circ \sB_m$ holds.) On the other hand, since $\omega_0(e_m)=\omega_\infty(e_m)=1$ and $\sA_0 \circ \sB_{m-1}$ does \emph{not} hold, we must have that the edge $e_m$ is traversed both by $\gamma_\ell$ and $\gamma_j$ for some $1 \leq j \leq \ell-1$. Write $\sE_{m,j} \subseteq \sE_m$ for the event that $\sE_m$ holds and that the paths $\gamma_1,\ldots,\gamma_\ell$ can be chosen so that $e_m$ is traversed by both $\gamma_\ell$ and $\gamma_j$, so that $\sE_m = \bigcup_{j=1}^{\ell-1} \sE_{m,j}$.
On the event $\sE_{m,j}$, the events $\{v_\ell \leftrightarrow \{e_m^-,e_m^+\}$ in $ \omega_m \}$,  $\{v_j \leftrightarrow \{e_m^-,e_m^+\} $ in $\omega_{m-1}\}$, $\{ \cG_\ell \leftrightarrow \{e_m^-,e_m^+\}$ in $\omega_m \}$, and $\{ \cG_j \leftrightarrow \{e_m^-,e_m^+\}$ in $\omega_{m-1} \}$ all occur disjointly.  (Here, the notion of disjoint occurence refers to the big product space $(\{0,1\}^E)^2 \times (\{0,1\}^V)^\ell$ on which $(\omega_0,\omega_\infty,\cG_1,\ldots,\cG_\ell)$ is defined.) Moreover, the events $\{v_i \leftrightarrow \cG_i$ in $\omega_{m-1}\}$ for $i \notin \{j,\ell\}$ also occur disjointly from each other and from these events. Thus, we may apply the BK inequality to deduce that
\begin{multline*}
\P_\otimes(\sA_0 \circ \sB_m) - \P_\otimes(\sA_0 \circ \sB_{m-1}) 
\leq \sum_{j=1}^{\ell-1}\bP_p(v_\ell \leftrightarrow \{e_m^-,e_m^+\})\bP_p(v_j \leftrightarrow \{e_m^-,e_m^+\})\\
\cdot\bP_{p,h_\ell}(\cG \leftrightarrow  \{e_m^-,e_m^+\})\bP_{p,h_j}(\cG \leftrightarrow  \{e_m^-,e_m^+\})
\prod_{i \notin \{j,\ell\}} M_{p,h_i}(v_i),
\end{multline*}
which is at most
\begin{equation*}
4\sum_{j=1}^{\ell-1}\bP_p(v_\ell \leftrightarrow \{e_m^-,e_m^+\})\bP_p(v_j \leftrightarrow \{e_m^-,e_m^+\})
\prod_{i =1}^\ell \left[\sup_{v\in V}M_{p,h_i}(v)\right].
\end{equation*}
The claim now follows by noting that $\bP_p(v\leftrightarrow \{e_m^-,e_m^+\}) \leq \frac{1}{p}\bP_p(v\leftrightarrow e_m^-)$ for every $v\in V$ and $m\geq 1$ by the Harris-FKG inequality, and hence that
\begin{equation*}
\sum_{m=1}^\infty \bP_p(v_\ell \leftrightarrow \{e_m^-,e_m^+\})\bP_p(v_j \leftrightarrow \{e_m^-,e_m^+\})\\ \leq 
\frac{1}{p^2}\sum_{w \in V}\deg(w)T_p(v_\ell,w)T_p(w,v_j) \leq \frac{M}{p^2}T^2_p(v_\ell,v_j). \qedhere
\end{equation*}


\end{proof}

\begin{remark}
\cref{lem:KN_inverseBK}, and the proof of \cite[Lemma 6.1]{MR2748397} that inspired it, are remarkable as a rare instance where it is the convergence of the \emph{bubble diagram} rather than the triangle diagram that is indicative of mean-field type behaviour for percolation. In particular, if $0$ denotes the origin in $\Z^d$, it seems one should expect that
\[\bP_{p_c}\bigl(\text{there exist two disjoint open paths from $0$ to $\partial [-n,n]^d$}\bigr) \asymp \bP_{p_c}\bigl(0 \leftrightarrow \partial [-n,n]^d\bigr)^2\]
not just for $d>6$, but also for some $d$ slightly smaller than $6$, possibly including $d=5$.
\end{remark}

Next, we compare the probability that the events all occur disjointly to the probability that the events all hold with all clusters distinct. 

\begin{lemma}
\label{lem:diagrammatic}
Let $G=(V,E)$ be a connected, locally finite graph, let $\ell \geq 2$, and let $v_1,\ldots,v_\ell$ be vertices of $G$. Then 
\begin{multline*}\P_{p,\mathbf{h}}\left(K_{v_1},\ldots,K_{v_\ell} \text{ are disjoint and $v_i \leftrightarrow \cG_i$ for every $1\leq i \leq n$}\right)
\\\geq \P_p\left(\{v_1 \leftrightarrow \cG_1\} \circ \cdots \circ \{v_\ell \leftrightarrow \cG_\ell\}\right)- 2\binom{\ell-1}{2} \prod_{i=1}^\ell\left[\sup_{v \in V} M_{p,h_i}^\ell(v)\right] \sup_{1 \leq i < j \leq \ell} T^3_p(v_i,v_j)
\end{multline*}
for every $0<p<1$ and $\mathbf{h}\in (0,\infty)^\ell$. 
\end{lemma}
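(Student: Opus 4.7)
The plan is to bound the gap between the two sides of the claimed inequality by a sum of triangle diagrams, in parallel with the structure of the proof of \cref{lem:KN_inverseBK}. Note that the event that the clusters $K_{v_1},\ldots,K_{v_\ell}$ are pairwise disjoint and $v_k\leftrightarrow\cG_k$ holds for every $k$ forces the disjoint occurrence of these connections (witnesses can be chosen inside the respective clusters), so the inequality is equivalent to showing that
\[
\P_{p,\mathbf h}\Bigl(\{v_1\leftrightarrow\cG_1\}\circ\cdots\circ\{v_\ell\leftrightarrow\cG_\ell\}\text{ and the clusters are not all disjoint}\Bigr)
\]
is bounded by the claimed error term.

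First I would apply a union bound over pairs: the clusters fail to be pairwise disjoint if and only if $v_i\leftrightarrow v_j$ for some $1\le i<j\le\ell$, so it suffices to bound
\[
\P_{p,\mathbf h}\Bigl(\{v_1\leftrightarrow\cG_1\}\circ\cdots\circ\{v_\ell\leftrightarrow\cG_\ell\}\cap\{v_i\leftrightarrow v_j\}\Bigr) \le 2\prod_{k=1}^\ell\sup_{v\in V}M_{p,h_k}(v)\cdot T_p^3(v_i,v_j)
\]
for each such pair and sum the resulting estimates over the $\binom{\ell}{2}$ pairs.

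Second, I would establish this pair bound by a path-surgery argument. Representing each ghost field by a dummy vertex $\rho_k$ adjacent to every $v\in V$ by an edge open with probability $1-e^{-h_k}$ makes $\{v_k\leftrightarrow\cG_k\}$ equivalent to $\{v_k\leftrightarrow\rho_k\}$ in the augmented graph. On the event in question, fix edge-disjoint witnesses $\gamma_1,\ldots,\gamma_\ell$ (with $\gamma_k$ a $v_k$-to-$\rho_k$ path) and a simple $v_i$-to-$v_j$ path $\pi$ in $G[p]$. Define $w_1$ to be the last vertex of $\pi$ lying on $\gamma_i$ and $w_2$ the first vertex of $\pi$ after $w_1$ lying on $\gamma_j$; both exist since $v_i\in\gamma_i$ and $v_j\in\gamma_j$. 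Splitting $\gamma_i$ at $w_1$, $\gamma_j$ at $w_2$, and setting $\pi':=\pi|_{[w_1,w_2]}$ (empty if $w_1=w_2$) yields five paths
\[
\gamma_i|_{[v_i,w_1]},\qquad \gamma_i|_{[w_1,\rho_i]},\qquad \pi',\qquad \gamma_j|_{[w_2,v_j]},\qquad \gamma_j|_{[w_2,\rho_j]}
\]
that are pairwise edge-disjoint by the defining choices of $w_1,w_2$ and the edge-disjointness of $\gamma_i,\gamma_j$. Together with the unaltered $\gamma_k$ for $k\ne i,j$ they witness the disjoint occurrence of the $\ell+3$ events $\{v_i\leftrightarrow w_1\}$, $\{w_1\leftrightarrow\cG_i\}$, $\{w_1\leftrightarrow w_2\}$, $\{w_2\leftrightarrow v_j\}$, $\{w_2\leftrightarrow\cG_j\}$, and $\{v_k\leftrightarrow\cG_k\}$ for $k\ne i,j$. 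BK and summation over $w_1,w_2\in V$ then bound the pair probability by
\[
\sum_{w_1,w_2\in V}\tau_p(v_i,w_1)\,M_{p,h_i}(w_1)\,\tau_p(w_1,w_2)\,\tau_p(w_2,v_j)\,M_{p,h_j}(w_2)\prod_{k\ne i,j}M_{p,h_k}(v_k),
\]
which is at most $\prod_{k=1}^\ell\sup_{v\in V}M_{p,h_k}(v)\cdot T_p^3(v_i,v_j)$; the degenerate $w_1=w_2$ contribution (which is of the form $T_p^2(v_i,v_j)\le T_p^3(v_i,v_j)$, using $T_p(w,w)=1$) accounts for the extra factor of two.

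The main obstacle is that the surgical path $\pi'$ can share edges with some $\gamma_k$ for $k\ne i,j$, in which case the five surgical paths are not simultaneously edge-disjoint from all remaining $\gamma_k$'s and BK does not apply verbatim. I expect to handle this either by refining the choice of $\pi$ to lie in $G[p]\setminus\bigcup_{k\ne i,j}\gamma_k$ whenever such a path exists, or by observing that any such overlap forces a further cluster coincidence $K_{v_k}=K_{v_i}$ for some $k\ne i,j$ (any shared edge puts $v_k$ in the same cluster as $v_i$), which in turn produces extra triangle factors that can be absorbed into the bound at the cost of enlarging the $\ell$-dependent constant.
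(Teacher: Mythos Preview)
Your approach is essentially the same as the paper's, and you have correctly identified the one real obstacle. The paper resolves it not by either of your proposed workarounds, but by a small change in the order of choices that removes the difficulty entirely.

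Rather than fixing the pair $(i,j)$ by a union bound and \emph{then} performing the surgery, the paper lets the surgery determine the second index. Concretely: take any open path $\gamma$ joining two of the $v_k$'s, say starting at $v_{j_1}$; let $w_1$ be the last vertex of $\gamma$ lying on $\gamma_{j_1}$; and let $w_2$ be the first vertex of $\gamma$ \emph{after} $w_1$ that lies on \emph{any} of the paths $\gamma_k$ with $k\neq j_1$. Only now define $j_2$ to be an index with $w_2\in\gamma_{j_2}$. With this choice, the segment $\gamma'=\gamma|_{[w_1,w_2]}$ is automatically internally vertex-disjoint from \emph{every} $\gamma_k$, so the five surgical paths together with the untouched $\gamma_k$ for $k\notin\{j_1,j_2\}$ really do witness the $\ell+3$-fold disjoint occurrence, and BK applies directly. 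Summing over the ordered pair $(j_1,j_2)$ and over $w_1,w_2\in V$ gives the bound $\ell(\ell-1)\prod_k\sup_v M_{p,h_k}(v)\cdot\sup_{i<j}T_p^3(v_i,v_j)$, matching the statement (up to the paper's own constant, which appears to contain a typo).

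Your second proposed fix---noting that an overlap with some $\gamma_k$ forces $v_k\leftrightarrow v_i$ and iterating---would eventually work but at the cost of a messier constant; the paper's trick makes this unnecessary. A minor side remark: your factor of~$2$ from the ``degenerate $w_1=w_2$'' case is not needed, since that case is already included in the double sum over $w_1,w_2$ via $\tau_p(w,w)=1$; the constant in the paper comes instead from the sum over ordered pairs $(j_1,j_2)$.
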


\begin{proof}[Proof of \cref{lem:diagrammatic}]
Let $\sA$ be the event that $K_{v_1},\ldots,K_{v_\ell}$ are disjoint and $v_i \leftrightarrow \cG_i$ for every $1\leq i \leq n$, and let $\sB$ be the event $\{v_1 \leftrightarrow \cG_1\} \circ \cdots \circ \{v_\ell \leftrightarrow \cG_\ell\}$. Clearly $\sA \subseteq \sB$. Suppose that $\sB \setminus \sA$ occurs. Since $\sB$ occurs, there must exist a collection of edge-disjoint open paths $\gamma_1,\ldots,\gamma_\ell$ such that $\gamma_i$ connects $v_i$ to a vertex of $\cG_i$. (Note that we may have that $v_i \in \cG_i$, in which case we may take $\gamma_i$ to be a degenerate length zero path.) On the other hand, since $\sA$ does \emph{not} occur, there must exist an open path $\gamma$ connecting two distinct vertices from the set $\{v_1,\ldots,v_\ell\}$. Suppose that this path $\gamma$ starts at the vertex $v_{j_1}$, and let $\gamma'$ be the segment of $\gamma$ between the last time it visits a vertex visited by $\gamma_{j_1}$ and the first subsequent time that it visits a vertex visited by one of the paths $\gamma_i$ for $i \neq j_1$. Call these two vertices $w_1$ and $w_2$, and let $j_2\neq j_1$ be such that $\gamma_{j_2}$ visits $w_2$. (It may be that $w_1=w_2$, in which case $\gamma'$ has length zero.)

Observe that, with this choice of $j_1$, $j_2,$ $w_1$ and $w_2$, we have that the events $\{v_{j_1}\to w_1\}$, $\{v_{j_2}\leftrightarrow w_2\}$, $\{w_1 \leftrightarrow w_2\}$, $\{w_1 \leftrightarrow \cG_{j_1}\}$, and $\{w_2 \leftrightarrow \cG_{j_2}\}$ all occur disjointly. Moreover, the events $\{v_i \leftrightarrow \cG_i\}$ for $i \notin \{j_1,j_2\}$ also occur disjointly from each other and from these events. Thus,  applying the BK inequality and summing over the possible choices of $j_1,j_2,w_1,$ and $w_2$, we obtain that
\begin{multline*}
\P_{p,\mathbf{h}}(\sB \setminus \sA)\\ \leq \sum_{w_1,w_2 \in V} \sum_{j_1=1}^\ell \sum_{j_2 \neq j_1} T_p(v_{j_1},w_1)T_p(w_1,w_2)T_p(w_2,v_{j_2}) M_{p,h_{j_1}}(w_1)M_{p,h_{j_2}}(w_2) \prod_{i \notin \{j_1,j_2\}} M_{p,h_i}(v_i)\\
\leq \ell(\ell-1)\sup_{1 \leq i < j \leq \ell} T^3_p(v_i,v_j) \prod_{i=1}^\ell \left[\sup_{v\in V} M_{p,h_i}(v) \right],
\end{multline*}
concluding the proof.
\end{proof}

We are now ready to prove \cref{thm:multibody,thm:multiarm}.

\begin{proof}[Proof of \cref{thm:multibody}]
It suffices to prove the lower bound, since the upper bound is an immediate consequence of \eqref{exponent:volume} and the BK inequality. 
Fix $\ell \geq 1$, and let $C_1$ be the constant from \eqref{eq:MagFKG}. Let $0<p<1$ and let $\mathbf{h}=(h_1,\ldots,h_\ell) \in (0,\infty)^\ell$. 
Let $G[p]$ be Bernoulli-$p$ bond percolation, let $\cG_1,\ldots,\cG_\ell$ be independent ghost fields of intensities $h_1,\ldots,h_\ell>0$, and write $\P_{p,\mathbf{h}}$ for the joint law of $G[p]$ and $\cG_1,\ldots,\cG_\ell$.
Since $G$ is quasi-transitive and $\nabla_{p_c}<\infty$, we have by the result of \cite{MR2779397} that the \emph{open triangle condition} also holds, so that for every $\eps>0$ there exists $r<\infty$ such that if $u,v$ have distance at least $r$ in $G$ then $T_{p_c}^3(u,v) \leq \eps$. In particular, it follows that there exists $K(\ell)$ such that if $u,v$ are vertices of $G$ with distance at least $K(\ell)$ then
\[
T_{p_c}^2(u,v) \leq T_{p_c}^3(u,v) \leq \frac{p_c^{2+C_1\ell}}{64 M \ell^2},
\]
where $M$ is the maximum degree of $G$. 
(Note that if $p_c<p_{2\to 2}$ we can take $K(\ell)$ to be $O(\ell)$, and if $G$ is transitive with $p_c<p_{2\to 2}$ we can take $K(\ell)$ to be $O(\log \ell)$.) With this choice of $K(\ell)$, we deduce from \cref{lem:KN_inverseBK,lem:diagrammatic} that if $v_1,\ldots,v_\ell$ are such that $d(v_i,v_j) \geq K(\ell)$ for every $1 \leq i < j \leq \ell$, then
\begin{equation*}\P_{p_c,\mathbf{h}}\left(K_{v_1},\ldots,K_{v_\ell} \text{ are disjoint and $v_i \leftrightarrow \cG_i$ for every $1\leq i \leq \ell$}\right)
\geq \frac{1}{2}\prod_{i=1}^\ell \left[\inf_{v\in V} M_{p,h_i}(v)\right]
\end{equation*}
for every $h_1,\ldots,h_\ell>0$. Since $G$ is quasi-transitive, we may apply the lower bound of \eqref{exponent:magnetization} to deduce that there exists a constant $c_1$ such that
\begin{equation}\P_{p_c,\mathbf{h}}\left(K_{v_1},\ldots,K_{v_\ell} \text{ are disjoint and $v_i \leftrightarrow \cG_i$ for every $1\leq i \leq \ell$}\right)
\geq c_1^\ell \prod_{i=1}^\ell \sqrt{h_i}.
\label{eq:magmultlower}
\end{equation}
On the other hand, the BK inequality implies that 
\begin{equation*}\P_{p_c,\mathbf{h}}\left(K_{v_1},\ldots,K_{v_\ell} \text{ are disjoint and $v_i \leftrightarrow \cG_i$ for every $1\leq i \leq \ell$}\right)
\leq  \prod_{i=1}^\ell \sup_{v\in V} M_{p_c,h_i}(v),
\end{equation*}
and hence by \eqref{exponent:magnetization} that, since $\nabla_{p_c}<\infty$, there exists a constant $C_2$ such that
\begin{equation}\P_{p_c,\mathbf{h}}\left(K_{v_1},\ldots,K_{v_\ell} \text{ are disjoint and $v_i \leftrightarrow \cG_i$ for every $1\leq i \leq \ell$}\right)
\leq C_2^\ell \prod_{i=1}^\ell \sqrt{h_i}
\label{eq:magmultupper}
\end{equation}
for every $h_1,\ldots,h_\ell >0$.

It remains to convert the magnetization estimates \eqref{eq:magmultlower} and \eqref{eq:magmultupper} into the claimed estimate \eqref{eq:multibody}. Let $\delta_\ell>0$ be a sufficiently small that
\[
\frac{1-e^{-\delta_\ell}}{(1-e^{-1})\sqrt{\delta_\ell}} \leq \frac{c_1^\ell}{2 \ell C_2^\ell},
\]
where $c_1$ and $C_2$ are the constants from \eqref{eq:magmultlower} and \eqref{eq:magmultupper} respectively.
Let $n_1,n_2,\ldots,n_\ell \geq 1$ and let $h_i=\delta_\ell/n_i$ for each $1\leq i \leq \ell$. Let $\sD$ be the event that $K_{v_1},\ldots,K_{v_\ell}$ are disjoint, let $\sA$ be the event that $v_i \leftrightarrow \cG_i$ for every $i \geq 1$, and let $\sB$ be the event that $|K_{v_i}|\geq n_i$ for every $i \geq 1$. Then we have that
\begin{equation*}
\P_{p_c,\mathbf{h}}(\sA \cap \sD)
\leq \P_{p_c,\mathbf{h}}\left(\sB \cap \sD \right) + \sum_{j=1}^\ell \P(\sA \cap \sD \cap \{|K_{v_j}|\leq n_j\}).
\end{equation*}
For each $1 \leq i,j \leq \ell$ let $h_{i,j}$ be defined by $h_{i,j}=h_i$ if $i \neq j$ and $h_{j,j}=1/n_j$, and let $\mathbf{h}_j=(h_{1,j},\ldots,h_{\ell,j})$. Then for each $1\leq j \leq n$ we can write
\begin{align*}
\P_{p_c,\mathbf{h}}(\sA \cap \sD \cap \{|K_{v_j}|\leq n_j\}) &= \bE_{p_c}\left[\mathbbm{1}\left(\sD \cap \{|K_{v_j}|\leq n_j\}\right)\prod_{i=1}^\ell\left(1-e^{-h_i |K_{v_i}|}\right)\right]
\\
&= \bE_{p_c}\left[\mathbbm{1}\left(\sD \cap \{|K_{v_j}|\leq n_j\}\right)\frac{1-e^{-h_{j} |K_{v_j}|}}{1-e^{-h_{j,j} |K_{v_j}|}}\prod_{i=1}^\ell\left(1-e^{-h_{i,j} |K_{v_i}|}\right)\right]
\end{align*}
from which it follows that
\begin{align*}
\P_{p_c,\mathbf{h}}(\sA \cap \sD \cap \{|K_{v_j}|\leq n_j\})
&\leq \frac{1-e^{-\delta_\ell}}{1-e^{-1}}\bE_{p_c}\left[\mathbbm{1}\left(\sD\right)\prod_{i=1}^\ell\left(1-e^{-h_{i,j} |K_{v_i}|}\right)\right]\\
& =  \frac{1-e^{-\delta_\ell}}{1-e^{-1}} \sum_{j=1}^\ell \P_{p_c,\mathbf{h_j}}(\sA \cap \sD).
\end{align*}
Applying \eqref{eq:magmultupper} we deduce that 
\[
\P(\sA \cap \sD \cap \{|K_{v_j}|\leq n_j\})
\leq \frac{1-e^{-\delta_\ell}}{1-e^{-1}} C_2^\ell \prod_{i=1}^\ell \sqrt{h_{i,j}} = \frac{1-e^{-\delta_\ell}}{(1-e^{-1})\sqrt{\delta_\ell}} C_2^\ell \prod_{i=1}^\ell \sqrt{h_i}
\]
and hence by \eqref{eq:magmultlower} that
\begin{multline*}
\P_{p_c,\mathbf{h}}(\sB \cap \sD) \geq \P(\sA \cap \sD) - \sum_{j=1}^\ell \P(\sA \cap \sD \cap \{|K_{v_j}|\leq n_j\})\\ \geq \left[c_1^\ell - \frac{1-e^{-\delta_\ell}}{(1-e^{-1})\sqrt{\delta_\ell}} \ell C_2^\ell \right] \prod_{i=1}^\ell \sqrt{h_i}
\geq \frac{c_1^\ell}{2} \prod_{i=1}^\ell \sqrt{h_i}.
\end{multline*}
It follows that
\[
\bP_{p_c}\left(K_{v_1},\ldots,K_{v_\ell} \text{ are disjoint and } |K_{v_i}| > n_i \text{ for every $1 \leq i \leq \ell$}\right) \geq \frac{c^\ell \delta_\ell^{\ell/2}}{2} \prod_{i=1}^\ell n_i^{-1/2}
\]
for every $v_1,\ldots,v_\ell$ with $d(v_i,v_j)\geq K(\ell)$ for every $1\leq i < j \leq \ell$ and every $n_1,\ldots,n_\ell \geq 1$.
\end{proof}

\begin{proof}[Proof of \cref{thm:multiarm}]
It suffices to prove the lower bound, since the upper bound is an immediate consequence of \eqref{exponent:theta} and the BK inequality. 
Fix $\ell \geq 1$, and let $C_1$ be the constant from \eqref{eq:MagFKG}. Let $0<p<1$ and let $h>0$.  
Let $G[p]$ be Bernoulli-$p$ bond percolation, let $\cG_1,\ldots,\cG_\ell$ be independent ghost fields of intensity $h>0$, and write $\P_{p,h}$ for the joint law of $G[p]$ and $\cG_1,\ldots,\cG_\ell$.
Fix $p_c<p_0<p_{2\to 2}$. As in the proof of \cref{thm:multibody}, there exists $K(\ell)$ such that if $u,v$ are vertices of $G$ with distance at least $K(\ell)$ then we have by \cref{lem:KN_inverseBK,lem:diagrammatic} that
\[
T_{p}^2(u,v) \leq T_{p}^3(u,v) \leq \frac{p_c^{2+C_1\ell}}{64M \ell^2}.
\]
for every $0 \leq p \leq p_0$, where $M$ is the maximum degree of $G$. Thus, if $v_1,\ldots,v_\ell$ satisfy $d(v_i,v_j) \geq K(\ell)$ for every $1 \leq i < j \leq \ell$, then
\begin{equation*}\P_{p,h}\left(K_{v_1},\ldots,K_{v_\ell} \text{ are disjoint and $v_i \leftrightarrow \cG_i$ for every $1\leq i \leq \ell$}\right)
\geq \frac{1}{2}\left[\inf_{v\in V} M_{p,h}(v)\right]^\ell
\end{equation*}
for every $0 \leq p < p_0$ and $h>0$. Taking the limit as $h \downarrow 0$, we obtain that
\begin{equation*}\bP_p(v_1,\ldots,v_\ell \text{ are all in distinct infinite clusters})
\geq \frac{1}{2}\left[\inf_{v\in V} \bP_p(|K_v|=\infty)\right]^\ell
\end{equation*}
for every $0 \leq p < p_0$ and $v_1,\ldots,v_\ell$ satisfying $d(v_i,v_j) \geq K(\ell)$ for every $1 \leq i < j \leq \ell$. Thus, we may deduce \eqref{eq:twoarm1} from \eqref{exponent:theta}.

We now deduce the lower bounds of \eqref{eq:twoarm2} and \eqref{eq:twoarm3} from the lower bound of \eqref{eq:twoarm1}. First, notice that 
\begin{multline*}
\bP_{p_c+\frac{1}{n}}(K_{v_1},\ldots,K_{v_\ell} \text{  are disjoint and $\rad_\mathrm{int}(K_{v_i}) \geq n$ for every $1\leq i \leq \ell$})
\\
 \geq \bP_{p_c+\frac{1}{n}}(v_1,\ldots,v_\ell \text{ are all in distinct infinite clusters})
 \succeq_\ell n^{-\ell}
\end{multline*}
for every $n\geq 1$ and every  $v_1,\ldots,v_\ell$ satisfying $d(v_i,v_j) \geq K(\ell)$ for every $1 \leq i < j \leq \ell$: the first inequality is trivial and the second follows from \eqref{eq:twoarm1}. Now consider coupling percolation at $p_c$ and $p_c+\frac{1}{n}$ in the standard monotone fashion. If the clusters of $v_1,\ldots,v_\ell$ are all distinct with intrinsic radius at least $n$ in percolation at $p_c+\frac{1}{n}$, then the conditional probability that this continues to hold in percolation at $p_c$ is at least $[p_c/(p_c+\frac{1}{n})]^{n \ell}$. To see this, take a length-$n$ intrinsic geodesic from $v_i$ in each $(p_c+\frac{1}{n})$-cluster, and observe that, for the property to no longer hold at $p_c$, at least one edge in at least one of these geodesics must change from open to closed when we move from $p_c+\frac{1}{n}$ to $p_c$. Thus, we deduce that
\begin{multline*}
\bP_{p_c}(K_{v_1},\ldots,K_{v_\ell} \text{  are disjoint and $\rad_\mathrm{int}(K_{v_i}) \geq n$ for every $1\leq i \leq \ell$})\\
\geq \left[\frac{p_c}{p_c+\frac{1}{n}}\right]^{n\ell} \bP_{p_c+\frac{1}{n}}(K_{v_1},\ldots,K_{v_\ell} \text{  are disjoint and $\rad_\mathrm{int}(K_{v_i}) \geq n$ for every $1\leq i \leq \ell$}),
\end{multline*}
so that the lower bound of \eqref{eq:twoarm2} follows from the lower bound of \eqref{eq:twoarm1}. Finally, the lower bound of \eqref{eq:twoarm3} can be deduced from the lower bound of \eqref{eq:twoarm2} using \cref{prop:intdecay} in a very similar manner to the proof of \cref{thm:ext_radius}.
\end{proof}

Let $G$ be a connected, locally finite graph and consider a Bernoulli bond percolation $G[p]$ on $G$. A vertex $v$ of $G$ is said to be a \textbf{furcation point} if $K_v$ is infinite and deleting $v$ would split $K_v$ into at least three distinct infinite clusters, and say that $v$ is a \textbf{trifurcation point} if deleting $v$ would split $K_v$ into \emph{exactly} three distinct infinite clusters. These points come up memorably in the Burton-Keane \cite{burton1989density} proof of uniqueness of the infinite cluster in amenable transitive graphs, where it is argued that if $G[p]$ has infinitely many infinite clusters, then it must have furcation points; see also \cite[Section 7.3]{LP:book}. Using \cref{thm:multiarm} allows one to make this proof quantitative, leading to the following corollary. (Note that the upper bound follows trivially from \eqref{exponent:theta} and the BK inequality.)

\medskip

\begin{corollary}
\label{cor:furcations}
Let $G$ be a connected, locally finite, quasi-transitive graph, and suppose that $p_c<p_{2\to 2}$. Then there exists a vertex $v$ of $G$ such that
	\[
\bP_p\bigl(v \text{ \emph{is a trifurcation point}}\bigr) \asymp (p-p_c)^3 \qquad  \text{ as $p \downarrow p_c$.}
	\]
\end{corollary}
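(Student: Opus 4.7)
The upper bound is immediate from the BK inequality: if $v$ is a trifurcation point then there exist three vertex-disjoint infinite open paths leaving $v$ through distinct neighbours, so \eqref{exponent:theta} yields $\bP_p(v\text{ is a trifurcation})\le\binom{\deg v}{3}(p\theta(p))^3 \preceq (p-p_c)^3$. The same argument also gives $\bP_p(v\text{ has at least four infinite branches})\preceq (p-p_c)^4$, so for the lower bound it suffices to produce a vertex $v$ with $\bP_p(v\text{ has at least three infinite branches})\succeq(p-p_c)^3$ and then subtract.

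Let $K=K(3)$ be the constant from \cref{thm:multiarm}. Using that $G$ has exponential volume growth, I will choose a vertex $v$ together with three vertex-disjoint simple paths $\gamma_1,\gamma_2,\gamma_3$ starting at $v$ through distinct neighbours $u_1,u_2,u_3$ and ending at vertices $v_1,v_2,v_3$ with pairwise graph distance at least $K$; set $T=V(\gamma_1\cup\gamma_2\cup\gamma_3)\setminus\{v_1,v_2,v_3\}$ and $G^\ast:=G[V\setminus T]$. Consider the event $\sF$ that (i) every tripod edge is open, (ii) every non-tripod edge with at least one endpoint in $T\setminus\{v\}$ is closed, and (iii) in $G^\ast[p]$ the vertices $v_1,v_2,v_3$ lie in three distinct infinite clusters. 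These three conditions depend on pairwise disjoint sets of edges and are therefore independent; the first two have probabilities bounded below by positive constants uniformly for $p\in[p_c,1/2]$. Moreover, on $\sF$ any open path in $G[p]\setminus\{v\}$ joining $u_i$ to $u_j$ with $i\ne j$ must, by (ii), leave and re-enter $T\setminus\{v\}$ only through the points $v_1,v_2,v_3$; the intervening segment lies in $V\setminus T$ and therefore produces a $G^\ast[p]$-connection between two distinct $v_k$'s, contradicting (iii). Hence on $\sF$ the neighbours $u_1,u_2,u_3$ lie in three distinct infinite components of $G[p]\setminus\{v\}$, so $v$ is a furcation of order $\ge 3$. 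The problem reduces to showing that condition (iii) has probability $\succeq(p-p_c)^3$.

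This last step, which I expect to be the main technical obstacle, is morally \cref{thm:multiarm} applied inside $G^\ast$. The inverse-BK and diagrammatic inputs \cref{lem:KN_inverseBK,lem:diagrammatic} do not require quasi-transitivity and transfer directly: $G^\ast$ has bounded degrees, and the triangles $T_{p,G^\ast}^3(v_i,v_j)$ are dominated by $T_{p,G}^3(v_i,v_j)$, which can be made as small as desired by taking $K$ large via the open triangle condition for $G$ (holding uniformly for $p$ near $p_c$). What needs verification in order to rerun the proof of \cref{thm:multiarm} in $G^\ast$ is the identity $p_c(G^\ast)=p_c(G)$ together with the magnetization asymptotic $M^{G^\ast}_{p_c,h}(v_i)\asymp\sqrt{h}$ as $h\downarrow 0$. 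Both should follow from $|T|<\infty$: a positive fraction of infinite clusters in $G$ survive the removal of a fixed finite set (by nonamenability), giving $p_c(G^\ast)=p_c(G)$, while the upper bound $M^{G^\ast}_{p_c,h}\le M^G_{p_c,h}\asymp\sqrt{h}$ is trivial and the matching lower bound follows by comparison once one controls the contribution of vertices connected to $v_i$ only through $T$, which is negligible for $v_i$ at distance $\ge K$ from $T$. Once (iii) is in hand, combining with the upper bound $\bP_p(v\text{ has }\ge 4\text{ branches})\preceq (p-p_c)^4$ from the first paragraph yields $\bP_p(v\text{ is a trifurcation})\succeq(p-p_c)^3$ for $p$ sufficiently close to $p_c$.
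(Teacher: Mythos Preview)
Your upper bound and your overall strategy for the lower bound (local surgery to manufacture a trifurcation from three distinct infinite clusters) are both correct and match what the paper intends. The gap is in your execution of step (iii). You propose to rerun the proof of \cref{thm:multiarm} inside $G^\ast$ and assert that the magnetization lower bound $M^{G^\ast}_{p_c,h}(v_i)\succeq\sqrt h$ holds ``for $v_i$ at distance $\ge K$ from $T$''. But in your own construction $v_i$ is a tip of the tripod and is therefore \emph{adjacent} to $T$; you cannot place $v_i$ far from $T$ while keeping it as an endpoint of $\gamma_i$. So your justification for the crucial lower bound on $\theta^{G^\ast}_p(v_i)$ (equivalently $M^{G^\ast}_{p_c,h}(v_i)$) collapses, and without it the inverse-BK machinery gives nothing.

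The fix is not to re-derive \cref{thm:multiarm} in $G^\ast$ at all, but to use it once in $G$ and then run the standard Burton--Keane local modification (this is exactly what the paper has in mind when it says ``Using \cref{thm:multiarm} allows one to make this proof quantitative''). Take $v_1,v_2,v_3$ at pairwise distance $\ge K(3)$ inside a large ball $B$, and let $F$ be the set of edges with at least one endpoint in $B$. On the event $\{v_1,v_2,v_3$ in distinct infinite $G[p]$-clusters$\}$, which has probability $\succeq(p-p_c)^3$ by \cref{thm:multiarm}, each infinite cluster $K_{v_i}$ must contain an infinite component after deleting $B$, and this component is reached from $v_i$ by an open path whose last exit from $B$ lands on some $z_i\in\partial B$. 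The $z_i$ lie in distinct infinite clusters of $\omega|_{E\setminus F}$, and they belong to the fixed finite set $\partial B$; pigeonhole over the finitely many choices of $(z_1,z_2,z_3)$. Since this event depends only on $\omega|_{E\setminus F}$, it is independent of $\omega|_F$, and you can now set $\omega|_F$ to be a fixed tree connecting $z_1,z_2,z_3$ with a single branch point $w$ (all other $F$-edges closed) at a cost bounded below uniformly in $p$. This makes $w$ a trifurcation point with probability $\succeq(p-p_c)^3$, and subtracting the $(p-p_c)^4$ contribution from $\ge 4$ arms (as you already noted) finishes the lower bound. The point is that the existence of the $z_i$ is a \emph{deterministic consequence} of the event in $G$, so nothing needs to be re-proved in a modified graph.
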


(Note that, under the triangle condition, being a furcation point but \emph{not} a trifurcation point has probability of order at most $(p-p_c)^4$ as $p \downarrow p_c$ by \eqref{exponent:theta} and the BK inequality.)

\section{Applications to percolation in the hyperbolic plane}
\label{sec:planar}

In this section we apply the results of the previous sections to study percolation on quasi-transitive nonamenable \emph{simply connected planar maps} with locally finite dual. 
In particular, we apply duality arguments to compute various critical exponents governing the geometry of clusters at the \emph{uniqueness threshold} $p_u$. 

Let us now briefly recall the relevant terminology. A (locally finite) \textbf{map} is a   connected, locally finite graph $G$ together with an equivalence class of proper embeddings of $G$ into orientable surfaces (without boundary), where we consider two  embeddings to be equivalent if there is an orientation preserving homeomorphism between the two surfaces sending one embedding to the other. A map is said to be \textbf{simply connected} if the surface it is embedded in is (in which case it is homeomorphic to the plane or the sphere). Maps can also be defined combinatorially as graphs equipped  with cyclic orderings of the oriented edges emanating from each vertex. Every map $M$ has a \textbf{dual} $M^\dagger$ whose vertices are the faces of $M$ and whose faces correspond to the vertices of $M$, and there is a natural bijection between the edges of $M$ and the edges of $M^\dagger$ sending each edge $e$ of $M$ to the unique edge $e^\dagger$ of $M^\dagger$ that crosses $e$. See \cite{LZ} or \cite[Section 2.1]{unimodular2} for further background. Let us remark again that if $M$ is a simply connected, quasi-transitive, locally finite, nonamenable map then it is Gromov hyperbolic \cite{MR3658330} and hence has $p_c<p_{2\to 2}$ by the results of \cite{1804.10191}.

Let $M$ be a map with locally finite dual $M^\dagger$. Observe that if $\omega$ is distributed as Bernoulli-$p$ bond percolation on $M$, then the configuration $\omega^\dagger \in \{0,1\}^{E^\dagger}$  defined by $\omega^\dagger(e^\dagger)=1-\omega(e)$ is distributed as Bernoulli-$(1-p)$ percolation on $M^\dagger$. This duality is extremely useful in the case that $M$ is simply connected. In particular, 
Benjamini and Schramm proved that if $M$ is a nonamenable, quasi-transitive, locally finite, simply connected map with locally finite dual, then 
\begin{equation}
\label{eq:pcpu_duality}
 p_u(M)=1-p_c(M^\dagger) \qquad \text{ and } \qquad 0<p_c(M)<p_u(M)<1.
\end{equation} See also the earlier work of Lalley \cite{MR1614583}. 

We now apply this duality  to deduce various results about percolation at $p_u$ on $M$ by converting them into results about \emph{critical} percolation on $M^\dagger$.  
Given two vertices $x$ and $y$ that are in the same cluster, we write $\operatorname{ConRad}(x,y)$ for the minimal $r$ such that $x$ and $y$ are connected by an open path in the union of extrinsic balls $B(x,r)\cup B(y,r)$.

\begin{thm}
\label{thm:planarpuexponents}
Let $M$ be a connected, locally finite, nonamenable, quasi-transitive, simply connected planar map with locally finite dual. Then 
\begin{align*}
\sup_{x \sim y}\bP_{p_u}\bigl[ d_\mathrm{int}(x,y) &\geq n \mid x \leftrightarrow y \bigr] \asymp n^{-1} &&\text{and}\\
\sup_{x\sim y}\bP_{p_u}\bigl[ \operatorname{ConRad}(x,y) &\geq n \mid x \leftrightarrow y \bigr] \asymp n^{-2} &&
\end{align*}
where the suprema are taken over all neighbouring pairs of vertices in $M$.
\end{thm}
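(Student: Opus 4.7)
The plan is to use the Benjamini-Schramm planar duality $p_u(M)=1-p_c(M^\dagger)$ together with the edge coupling $\omega^\dagger(e^\dagger)=1-\omega(e)$ to translate both bounds into statements about \emph{critical} percolation on the dual map $M^\dagger$, where the earlier results apply. Since $M$ is simply connected, nonamenable, and quasi-transitive with locally finite dual, the same holds for $M^\dagger$; both are Gromov hyperbolic by \cite{MR3658330}, so $p_c(M^\dagger)<p_{2\to 2}(M^\dagger)$ by \cite{1804.10191} and the results of \cref{sec:intrinsic_decay,sec:multiarm} apply to $M^\dagger$ at $p_c$. Because $\bP_{p_u}[x\leftrightarrow y]\geq p_u$ for adjacent $x,y$, it suffices to prove matching unconditional upper and lower bounds.

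Fix adjacent $x,y$ with $e=\{x,y\}$ and dual edge $e^\dagger=\{f_1,f_2\}$. For $n\geq 2$ both events force $\omega(e)=0$, equivalently $\omega^\dagger(e^\dagger)=1$; given this, planar duality in the simply connected map yields that $\{x\leftrightarrow y\}$ coincides with the event that $e^\dagger$ is a bridge in $\omega^\dagger$, i.e., the two dual clusters $C_i:=K^\dagger_{f_i\setminus e^\dagger}$ are disjoint and (almost surely) finite. The shortest primal-open cycle through $e$ encloses whichever of $C_1,C_2$ has the smaller outer boundary in the planar embedding, so $d_\mathrm{int}(x,y)$ is comparable to the minimum of the two outer boundary lengths; using nonamenable isoperimetry and the (nearly) tree-like structure of critical clusters under the triangle condition, this is in turn comparable to $\min(|C_1|,|C_2|)$. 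A parallel planar-duality analysis on the ball $B(x,n-1)\cup B(y,n-1)$ shows that $\operatorname{ConRad}(x,y)\geq n$ is equivalent, up to multiplicative constants, to both $C_1$ and $C_2$ having extrinsic radius $\geq cn$: any separator within the ball must pass through $e^\dagger$, and so decomposes as $e^\dagger$ together with two dual-open arms from $f_1,f_2$ reaching the boundary of the ball, one inside each $C_i$.

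The upper bounds are then immediate from the BK inequality together with the mean-field volume tail $\bP_{p_c(M^\dagger)}[|K|\geq n]\asymp n^{-1/2}$ (which holds because $\nabla_{p_c(M^\dagger)}<\infty$) and \cref{thm:ext_radius}, yielding $n^{-1/2}\cdot n^{-1/2}=n^{-1}$ and $n^{-1}\cdot n^{-1}=n^{-2}$ respectively. For the matching lower bounds, the obstruction is that $f_1,f_2$ are adjacent in $M^\dagger$, whereas \cref{thm:multibody,thm:multiarm} require the source vertices to be at distance $\geq K(2)$. I would circumvent this by choosing proxy dual vertices $f_i'$ with $d(f_i,f_i')=O(1)$ but $d(f_1',f_2')\geq K(2)$, applying \cref{thm:multibody} (respectively, \cref{thm:multiarm} via~\eqref{eq:twoarm3}) to $f_1',f_2'$, and then forcing short dual-open paths from each $f_i$ to its proxy $f_i'$; this local modification costs only an $n$-independent probability factor and preserves both the disjointness and the two-body/two-arm structure. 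The most delicate step will be making the outer-boundary/volume comparison precise in the intrinsic-distance identification, which requires controlling the rare configurations where a critical dual cluster has many ``holes'' in its planar embedding; this should follow from the fact that critical clusters have a bounded expected number of excess edges under the triangle condition, together with the nonamenable planar structure.
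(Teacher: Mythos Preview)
Your overall strategy matches the paper's: use planar duality to translate to critical percolation on $M^\dagger$, identify $d_\mathrm{int}(x,y)$ with $\min(|K_1|,|K_2|)$ and $\operatorname{ConRad}(x,y)$ with $\min(\diam K_1,\diam K_2)$, then apply the volume/radius tail bounds with BK for the upper bounds and \cref{thm:multibody,thm:multiarm} with a finite-energy argument for the lower bounds. Two points deserve correction.

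First, the comparison $d_\mathrm{int}(x,y)\asymp\min(|K_1|,|K_2|)$ is \emph{entirely deterministic} and needs no probabilistic input about the structure of critical clusters; your worry about ``holes'' and excess edges is misplaced. For the upper bound, the primal edges dual to the full edge boundary of $K_i$ (not just the outer boundary) contain a path from $x$ to $y$ whenever $K_1\neq K_2$, and there are at most $\deg_{\max}(M^\dagger)\,|K_i|$ such edges. For the lower bound, any open primal path $\eta$ from $x$ to $y$ avoiding $e$ gives a cycle $\eta\cup\{e\}$ whose dual edges are $\omega^\dagger$-closed (except $e^\dagger$); hence $K_1$ and $K_2$ lie in opposite complementary regions of the cycle, one of which is bounded, so the cycle encloses at least $\min(|K_1|,|K_2|)$ faces, and nonamenability of $M^\dagger$ bounds the cycle length below by a constant times this. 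No tree-likeness, no control on holes, is needed.

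Second, your finite-energy step has a genuine gap: forcing short dual-open paths from $f_i$ to the proxies $f_i'$ can \emph{merge} the two clusters, destroying the disjointness you need, and there is no obvious way to route the paths to prevent this. The paper's variant avoids this cleanly: pick $f,g$ at dual distance $\geq K(2)$, apply \cref{thm:multibody} (resp.\ \cref{thm:multiarm}) to get disjoint large clusters with probability $\succeq n^{-1}$ (resp.\ $n^{-2}$), then open the edges along a fixed geodesic from $f$ to $g$ one at a time. At the last step before the clusters merge, the dual edge about to be opened has its two endpoints in distinct clusters each at least as large (in volume, resp.\ diameter) as the originals. There are only $O(1)$ candidate edges along the geodesic, so by pigeonhole some fixed dual edge $e^\dagger$ achieves the lower bound---which is precisely what the supremum over $x\sim y$ in the statement requires.
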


The reader may find it interesting to compare these exponents to the corresponding exponents for the free uniform spanning forest \cite{HutNach15b}, which are $1/2$ and $1$ respectively. \cref{thm:planarpuexponents} is complemented by subsequent work of the author with Hermon \cite{SupercriticalNonamenable}, which establishes in particular that, under the hypotheses of \cref{thm:planarpuexponents}, for every $p\in (0,p_u) \cup (p_u,1)$, there exists a constant $c_p>0$ such that
\begin{equation}
\bP_{p}\bigl[ d_\mathrm{int}(x,y) \geq n \mid x \leftrightarrow y \bigr] \leq e^{-c_p n}
\end{equation}
for every $n\geq 1$ and every pair of neighbouring vertices $x$ and $y$. Finally, we remark that \cref{thm:planarpuexponents} settles a problem raised by Gabor Pete \cite[Exercise*** 12.68]{Pete}.

\medskip

Note that the lower bounds of \cref{thm:planarpuexponents}  cannot be strengthened to apply to \emph{every} edge, since the local geometry of the graph might make it impossible for the endpoints of some edges to be in distinct large clusters. The proof shows that the lower bound holds whenever the edge dual to that between $x$ and $y$ lies on a doubly infinite geodesic in $M^\dagger$.

\begin{proof}[Proof of \cref{thm:planarpuexponents}]
Let $\omega$ be $p_u$ percolation on $M$. Then $\omega^\dagger$ is $p_c$ percolation on $M^\dagger$ and consequently has only finite clusters by the results of \cite{BLPS99b,Hutchcroft2016944} since $M^\dagger$ is quasi-transitive and nonamenable.
Let $e$ be an edge of $M$, let $f$ and $g$ be the two faces of $M$ on either side of $e$, and let $K_1$ and $K_2$ be the clusters of $f$ and $g$ in $\omega^\dagger \setminus \{e^\dagger\}$ respectively. 
Observe that if $x$ and $y$ are connected in $\omega$ with $d_\mathrm{int}(x,y)>1$ then we must have that $\omega(e)=0$ and that $K_1 \neq K_2$,  since otherwise the path between $f$ and $g$ in $\omega^\dagger \setminus \{e^\dagger\}$ would disconnect $x$ from $y$. Moreover, we claim that there exist positive constants $c,C$ such that
\begin{equation}
\label{eq:dint_duality}
c \min\{ |K_1|, |K_2| \} \leq d_\mathrm{int}(x,y) \leq C\min\{ |K_1|, |K_2| \}
\end{equation}
and
\begin{equation}
\label{eq:conrad_duality}
c \min\{ \diam(K_1), \diam(K_2) \} \leq \operatorname{ConRad}(x,y) \leq C\min\{ \diam(K_1), \diam(K_2) \}
\end{equation}
on this event. 
Indeed, if $\eta$ is any open path from $x$ to $y$ in $\omega \setminus \{e\}$ then $\eta \cup \{e\}$ forms a cycle that surrounds one of the two clusters $K_1$ or $K_2$. The lower bound of \eqref{eq:dint_duality} follows by nonamenability of $M$ together with the fact that $M$ and $M^\dagger$ both have bounded degrees, these facts together implying that any simple cycle in $M$ must have length comparable to the number of faces it bounds. For the upper bound of \eqref{eq:dint_duality}, we note that the collection of dual edges other than $e$ in the boundary of $K_i$ must contain a primal path from $x$ to $y$ on the event that $K_1$ and $K_2$ are distinct. Thus, $d_\mathrm{int}(x,y)$ is at most this number of dual edges, which is at most a constant multiple of $|K_i|$ since $M^\dagger$ has bounded degrees.
The bounds of \eqref{eq:conrad_duality} follow by similar arguments.

The upper bounds of the theorem follow immediately from the estimates \eqref{eq:dint_duality}, \eqref{eq:conrad_duality}, the bounds \eqref{exponent:volume} and of \cref{thm:ext_radius} (applied to $M^\dagger$) together with the BK inequality. On the other hand, the lower bounds are very close to those of \cref{thm:multibody,thm:multiarm} and can be deduced from those theorems via a finite energy argument. We will give only a brief sketch of how this is done; an essentially identical argument is given in detail in the proof of \cite[Theorem 1.3]{hutchcroft2018locality}. By \cref{thm:multibody,thm:multiarm} applied to $M^\dagger$, there exist positive constants $c$ and $k$ such that if $f,g$ are faces of $M$ with distance at least $k$ (in $M^\dagger$) then 
\begin{align*}
\bP(K_{f},K_{g} \text{ distinct and } |K_{f}|,|K_{g}| \geq n) &\geq \frac{c}{n} \intertext{ and } \bP(K_{f},K_{g} \text{ distinct and } \diam(K_{f}),\diam(K_{g}) \geq n) &\geq  \frac{c}{n^2}.
\end{align*}
Fix two such $f$ and $g$. If we start with a configuration in which $K_f$ and $K_g$ are distinct and then force the edges along the geodesic in $M^\dagger$ between $f$ and $g$ to be open one at a time, there must be some first time when the two clusters merge when the next edge is added. At the time immediately before this one, the edge about to be added has endpoints in distinct clusters, both of which are at least as large (both in terms of volume and diameter) as $K_f$ and $K_g$ were in the original configuration. Since there are a constant number of choices of what this edge could be, and since forcing a bounded number of edges to be included in the percolation configuration increases probabilities of events by at most a constant, the claim follows.
\end{proof}


\begin{remark}
Outside of the planar case, the behaviour of percolation at $p_u$ is very poorly understood. In particular, no good characterisation of whether or not there is uniqueness at $p_u$ is known. It is known that quasi-transitive simply connected planar maps have uniqueness at $p_u$, but that graphs defined as infinite products \cite{MR1770624} and Cayley graphs of infinite Kazhdan groups  \cite{LS99} have nonuniqueness at $p_u$. It is open whether uniform lattices in $\bbH^d$ with $d\geq 3$ have nonuniqueness at $p_u$ or not. Indeed, this is also open for transitive graphs that are rough-isometric to $\bbH^2$ but are not planar. In \cite[Section 6]{1804.10191} it is shown that there is always nonuniqueness at $p_{2\to 2}$, and one possibility is that there is uniqueness at $p_u$ if and only if $p_u > p_{2\to 2}$.
\end{remark}

\subsection*{Acknowledgments} We thank Gady Kozma for helpful discussions on interpolation theory of operators, and thank Asaf Nachmias for comments on a draft.
We also thank the anonymous referees for their careful reading and helpful comments.

  \setstretch{1}
  \bibliographystyle{abbrv}
  \bibliography{unimodularthesis.bib}

\begin{thebibliography}{10}

\bibitem{aizenman1987sharpness}
M.~Aizenman and D.~J. Barsky.
\newblock Sharpness of the phase transition in percolation models.
\newblock {\em Communications in Mathematical Physics}, 108(3):489--526, 1987.

\bibitem{MR901151}
M.~Aizenman, H.~Kesten, and C.~M. Newman.
\newblock Uniqueness of the infinite cluster and continuity of connectivity
  functions for short and long range percolation.
\newblock {\em Comm. Math. Phys.}, 111(4):505--531, 1987.

\bibitem{MR762034}
M.~Aizenman and C.~M. Newman.
\newblock Tree graph inequalities and critical behavior in percolation models.
\newblock {\em J. Statist. Phys.}, 36(1-2):107--143, 1984.

\bibitem{unimodular2}
O.~Angel, T.~Hutchcroft, A.~Nachmias, and G.~Ray.
\newblock Hyperbolic and parabolic unimodular random maps.
\newblock {\em Geom. Funct. Anal.}, 2018.
\newblock To appear.

\bibitem{MR1404543}
P.~Antal and A.~Pisztora.
\newblock On the chemical distance for supercritical {B}ernoulli percolation.
\newblock {\em Ann. Probab.}, 24(2):1036--1048, 1996.

\bibitem{MR1127713}
D.~J. Barsky and M.~Aizenman.
\newblock Percolation critical exponents under the triangle condition.
\newblock {\em Ann. Probab.}, 19(4):1520--1536, 1991.

\bibitem{BLPS99b}
I.~Benjamini, R.~Lyons, Y.~Peres, and O.~Schramm.
\newblock Critical percolation on any nonamenable group has no infinite
  clusters.
\newblock {\em Ann. Probab.}, 27(3):1347--1356, 1999.

\bibitem{bperc96}
I.~Benjamini and O.~Schramm.
\newblock Percolation beyond {$\bold Z^d$}, many questions and a few answers.
\newblock {\em Electron. Comm. Probab.}, 1:no.\ 8, 71--82, 1996.

\bibitem{BS00}
I.~Benjamini and O.~Schramm.
\newblock Percolation in the hyperbolic plane.
\newblock {\em J. Amer. Math. Soc.}, 14(2):487--507, 2001.

\bibitem{burton1989density}
R.~M. Burton and M.~Keane.
\newblock Density and uniqueness in percolation.
\newblock {\em Communications in mathematical physics}, 121(3):501--505, 1989.

\bibitem{1707.00520}
H.~Duminil-Copin.
\newblock Lectures on the ising and potts models on the hypercubic lattice.
\newblock 2017.

\bibitem{duminil2015new}
H.~Duminil-Copin and V.~Tassion.
\newblock A new proof of the sharpness of the phase transition for bernoulli
  percolation and the ising model.
\newblock {\em Communications in Mathematical Physics}, pages 1--21, 2015.

\bibitem{MR3658330}
B.~Federici and A.~Georgakopoulos.
\newblock Hyperbolicity vs. amenability for planar graphs.
\newblock {\em Discrete Comput. Geom.}, 58(1):67--79, 2017.

\bibitem{fitzner2015nearest}
R.~Fitzner and R.~van~der Hofstad.
\newblock Mean-field behavior for nearest-neighbor percolation in {$d>10$}.
\newblock {\em Electron. J. Probab.}, 22:Paper No. 43, 65, 2017.

\bibitem{MR2221157}
D.~Gaboriau.
\newblock Invariant percolation and harmonic {D}irichlet functions.
\newblock {\em Geom. Funct. Anal.}, 15(5):1004--1051, 2005.

\bibitem{grimmett2010percolation}
G.~R. Grimmett.
\newblock {\em Percolation (grundlehren der mathematischen wissenschaften)}.
\newblock Springer: Berlin, Germany, 2010.

\bibitem{Haggstrom06}
O.~H\"aggstr\"om and J.~Jonasson.
\newblock Uniqueness and non-uniqueness in percolation theory.
\newblock {\em Probab. Surv.}, 3:289--344, 2006.

\bibitem{MR1043524}
T.~Hara and G.~Slade.
\newblock Mean-field critical behaviour for percolation in high dimensions.
\newblock {\em Comm. Math. Phys.}, 128(2):333--391, 1990.

\bibitem{SupercriticalNonamenable}
J.~Hermon and T.~Hutchcroft.
\newblock Supercritical percolation on nonamenable graphs: isoperimetry,
  analyticity, and exponential decay of the cluster size distribution.
\newblock Preprint. Available at \url{https://arxiv.org/abs/1904.10448}.

\bibitem{heydenreich2015progress}
M.~Heydenreich and R.~van~der Hofstad.
\newblock {\em Progress in high-dimensional percolation and random graphs}.
\newblock CRM Short Courses. Springer, Cham; Centre de Recherches
  Math\'ematiques, Montreal, QC, 2017.

\bibitem{Hutchcroft2016944}
T.~Hutchcroft.
\newblock Critical percolation on any quasi-transitive graph of exponential
  growth has no infinite clusters.
\newblock {\em Comptes Rendus Mathematique}, 354(9):944 -- 947, 2016.

\bibitem{hutchcroft2018locality}
T.~Hutchcroft.
\newblock Locality of the critical probability for transitive graphs of
  exponential growth.
\newblock {\em Ann. Probab.}, 2019.
\newblock To appear. Available at \url{https://arxiv.org/abs/1808.08940}.

\bibitem{1804.10191}
T.~Hutchcroft.
\newblock Percolation on hyperbolic graphs.
\newblock {\em Geom. Funct. Anal.}, 29(3):766--810, 2019.

\bibitem{1712.04911}
T.~Hutchcroft.
\newblock Statistical physics on a product of trees.
\newblock {\em Ann. Inst. Henri Poincar\'{e} Probab. Stat.}, 55(2):1001--1010,
  2019.

\bibitem{Hutchcroftnonunimodularperc}
T.~Hutchcroft.
\newblock Non-uniqueness and mean-field criticality for percolation on
  nonunimodular transitive graphs.
\newblock {\em J. Amer. Math. Soc.}, 2020.
\newblock To appear. Available at \url{https://arxiv.org/abs/1711.02590}.

\bibitem{2002.02916}
T.~Hutchcroft.
\newblock Slightly supercritical percolation on nonamenable graphs {I}: The
  distribution of finite clusters.
\newblock 2020.
\newblock Preprint. Available at \url{https://arxiv.org/abs/2002.02916}.

\bibitem{2002.02916b}
T.~Hutchcroft.
\newblock Slightly supercritical percolation on nonamenable graphs {II}:
  Isoperimetry and random walk.
\newblock 2020.
\newblock In preparation.

\bibitem{HutNach15b}
T.~Hutchcroft and A.~Nachmias.
\newblock Uniform spanning forests of planar graphs, 2019.

\bibitem{kozma2011percolation}
G.~Kozma.
\newblock Percolation on a product of two trees.
\newblock {\em The Annals of Probability}, pages 1864--1895, 2011.

\bibitem{MR2779397}
G.~Kozma.
\newblock The triangle and the open triangle.
\newblock {\em Ann. Inst. Henri Poincar\'e Probab. Stat.}, 47(1):75--79, 2011.

\bibitem{MR2551766}
G.~Kozma and A.~Nachmias.
\newblock The {A}lexander-{O}rbach conjecture holds in high dimensions.
\newblock {\em Invent. Math.}, 178(3):635--654, 2009.

\bibitem{MR2748397}
G.~Kozma and A.~Nachmias.
\newblock Arm exponents in high dimensional percolation.
\newblock {\em J. Amer. Math. Soc.}, 24(2):375--409, 2011.

\bibitem{MR1614583}
S.~P. Lalley.
\newblock Percolation on {F}uchsian groups.
\newblock {\em Ann. Inst. H. Poincar\'e Probab. Statist.}, 34(2):151--177,
  1998.

\bibitem{MR1873136}
S.~P. Lalley.
\newblock Percolation clusters in hyperbolic tessellations.
\newblock {\em Geom. Funct. Anal.}, 11(5):971--1030, 2001.

\bibitem{LZ}
S.~K. Lando and A.~K. Zvonkin.
\newblock {\em Graphs on surfaces and their applications}, volume 141 of {\em
  Encyclopaedia of Mathematical Sciences}.
\newblock Springer-Verlag, Berlin, 2004.
\newblock With an appendix by Don B. Zagier, Low-Dimensional Topology, II.

\bibitem{MR3009109}
R.~Lyons.
\newblock Fixed price of groups and percolation.
\newblock {\em Ergodic Theory Dynam. Systems}, 33(1):183--185, 2013.

\bibitem{LP:book}
R.~Lyons and Y.~Peres.
\newblock {\em Probability on Trees and Networks}.
\newblock Cambridge University Press, New York, 2016.
\newblock Available at \url{http://pages.iu.edu/~rdlyons/}.

\bibitem{LS99}
R.~Lyons and O.~Schramm.
\newblock Indistinguishability of percolation clusters.
\newblock {\em Ann. Probab.}, 27(4):1809--1836, 1999.

\bibitem{MR3005730}
A.~Nachmias and Y.~Peres.
\newblock Non-amenable {C}ayley graphs of high girth have {$p_c<p_u$} and
  mean-field exponents.
\newblock {\em Electron. Commun. Probab.}, 17:no. 57, 8, 2012.

\bibitem{MR923855}
B.~G. Nguyen.
\newblock Gap exponents for percolation processes with triangle condition.
\newblock {\em J. Statist. Phys.}, 49(1-2):235--243, 1987.

\bibitem{MR1756965}
I.~Pak and T.~Smirnova-Nagnibeda.
\newblock On non-uniqueness of percolation on nonamenable {C}ayley graphs.
\newblock {\em C. R. Acad. Sci. Paris S\'er. I Math.}, 330(6):495--500, 2000.

\bibitem{MR1770624}
Y.~Peres.
\newblock Percolation on nonamenable products at the uniqueness threshold.
\newblock {\em Ann. Inst. H. Poincar\'e Probab. Statist.}, 36(3):395--406,
  2000.

\bibitem{Pete}
G.~Pete.
\newblock Probability and geometry on groups.
\newblock http://www.math.bme.hu/~gabor/PGG.pdf, 2014.

\bibitem{sapozhnikov2010upper}
A.~Sapozhnikov et~al.
\newblock Upper bound on the expected size of the intrinsic ball.
\newblock {\em Electronic Communications in Probability}, 15:297--298, 2010.

\bibitem{MR1833805}
R.~H. Schonmann.
\newblock Multiplicity of phase transitions and mean-field criticality on
  highly non-amenable graphs.
\newblock {\em Comm. Math. Phys.}, 219(2):271--322, 2001.

\bibitem{MR1888869}
R.~H. Schonmann.
\newblock Mean-field criticality for percolation on planar non-amenable graphs.
\newblock {\em Comm. Math. Phys.}, 225(3):453--463, 2002.

\bibitem{Woess}
W.~Woess.
\newblock {\em Random walks on infinite graphs and groups}, volume 138 of {\em
  Cambridge Tracts in Mathematics}.
\newblock Cambridge University Press, Cambridge, 2000.

\end{thebibliography}
\end{document}